\def\vfrac#1#2{(#1)/#2}
\def\sklvfrac#1#2{((#1)/#2)}
\newcommand{\rrvert}{\vert}
\newcommand{\rrVert}{\Vert}
\newcommand{\llvert}{\vert}
\newcommand{\llVert}{\Vert}
\def\dlp{d_{\mathrm{LP}}}
\newcommand{\urn}{N}
\newcommand{\eqref}[1]{(\ref{#1})}
\newtheorem{theorem}{Theorem}[section]
\newtheorem{proposition}[theorem]{Proposition}
\newtheorem{lemma}[theorem]{Lemma}
\newcommand{\eps}{\varepsilon}
\newcommand{\eq}{\eqref}
\newcommand{\dtv}{d_{\mathrm{TV}}}
\newcommand{\dk}{d_{\mathrm{K}}}
\newcommand{\bigo}{\mathrm{O}}
\newcommand{\lito}{\mathrm{o}}
\newcommand{\Bi}{\operatorname{Bi}}
\newcommand{\GG}{\operatorname{GG}}
\newcommand{\Ga}{{\mathrm{G}}}
\newcommand{\Ge}{\operatorname{Ge}}
\newcommand{\Be}{\operatorname{Be}}
\newcommand{\U}{\mathrm{U}}
\newcommand{\IE}{\mathbb{E}}
\newcommand{\IP}{\mathbb{P}}
\newcommand{\law}{\mathscr{L}}
\newcommand{\IZ}{\mathbb{Z}}
\newcommand{\IR}{\mathbb{R}}
\newcommand{\I}{{\mathrm{I}}}
\def\mid{|}
\def\bmid{|}
\def\G{\Gamma}
\def\B{\mathrm{B}}
\def\ed{\stackrel{\mathscr{D}}{=}}
\def\Gez{\Ge_0}
\def\Geo{\Ge_1}
\begin{document}
\begin{frontmatter}

\title{Generalized gamma approximation with rates for~urns, walks and trees}
\runtitle{Generalized gamma approximation with rates}

\begin{aug}
\author[A]{\fnms{Erol A.}~\snm{Pek\"oz}\thanksref{T1}\ead[label=e1]{pekoz@bu.edu}},
\author[B]{\fnms{Adrian}~\snm{R\"ollin}\thanksref{T1}\ead[label=e2]{adrian.roellin@nus.edu.sg}}
\and
\author[C]{\fnms{Nathan}~\snm{Ross}\corref{}\thanksref{T3}\ead[label=e3]{nathan.ross@unimelb.edu.au}}
\runauthor{E. A. Pek\"oz, A. R\"ollin and N. Ross}
\affiliation{Boston University, National University of Singapore and
University of Melbourne}
\address[A]{E. A. Pek\"oz\\
Questrom School of Business\\
Boston University \\
595 Commonwealth Avenue\\
Boston, Massachusetts 02215\\
USA\\
\printead{e1}}
\address[B]{A. R\"ollin\\
Department of Statistics\\
\quad and Applied Probability\\
National University of Singapore\\
6 Science Drive 2\\
Singapore 117546\\
\printead{e2}}
\address[C]{N. Ross\\
School of Mathematics and Statistics\\
University of Melbourne\\
Parkville VIC, 3010\\
Australia\\
\printead{e3}}
\end{aug}
\thankstext{T1}{Supported in part by NUS Grant R-155-000-124-112.}
\thankstext{T3}{Supported in part by ARC Grant DP150101459, NSF Grants DMS-07-04159, DMS-08-06118, DMS-11-06999 and ONR Grant N00014-11-1-0140.}

%
\received{\smonth{9} \syear{2013}}
%
\revised{\smonth{2} \syear{2015}}

%
\begin{abstract}
We study a new class of time inhomogeneous P\'olya-type urn schemes and
give optimal rates of
convergence for the distribution of the properly scaled number of balls
of a given color to nearly the full class of generalized gamma
distributions with integer parameters, a class which includes the
Rayleigh, half-normal and gamma distributions. Our main tool is Stein's
method combined with characterizing the generalized gamma limiting
distributions as fixed points of distributional transformations related
to the equilibrium distributional transformation from renewal theory.
We identify special cases of these urn models in recursive
constructions of random walk paths and trees, yielding rates of
convergence for local time and height statistics of simple random walk
paths, as well as for the size of random subtrees of uniformly random
binary and plane trees.
\end{abstract}

%
\begin{keyword}[class=AMS]
\kwd[Primary ]{60F05}
\kwd{60C05}
\kwd[; secondary ]{60E10}
\kwd{60K99}
\end{keyword}
\begin{keyword}
\kwd{Generalized gamma distribution}
\kwd{P\'olya urn model}
\kwd{Stein's method}
\kwd{distributional transformations}
\kwd{random walk}
\kwd{random binary trees}
\kwd{random plane trees}
\kwd{preferential attachment random graphs}
\end{keyword}
\end{frontmatter}

\section{Introduction}\label{sec11}

Generalized gamma distributions arise as limits in a variety of
combinatorial settings involving random trees [e.g.,~\citet
{Janson2006a}, \citet{Meir1978} and \citet
{Panholzer2004}], urns [e.g.,~\citet{Janson2006}], and walks
[e.g.,~\citet{Chung1976}, \citet{Chung1949} and
\citet{Durrett1977}]. These distributions are those of gamma
variables raised to a power and noteworthy examples are the Rayleigh
and half-normal distributions. We show that for a family of time
inhomogeneous generalized P\'olya urn models, nearly the full class of
generalized gamma distributions with integer parameters appear as
limiting distributions, and we provide optimal rates of convergence to
these limits. Apart from some special cases, both the characterizations
of the limit distributions and the rates of convergence are new.

The result for our urn model (Theorem~\ref{thm1} below) follows from
a general approximation result
(Theorem~\ref{thm4} below) which provides a framework
for bounding the distance between a generalized gamma
distribution and a distribution of interest. This result is derived
using Stein's method [see \citet{Ross2011},
\citet{Ross2007} and \citet{Chen2011} for overviews]
coupled with characterizing the generalized gamma
distributions as unique fixed points of certain
distributional transformations.
Similar approaches to deriving approximation results have found past
success for other distributions in many applications:
the size-bias transformation for Poisson approximation by \citet
{Barbour1992},
the zero-bias transformation for normal approximation by
\citeauthor{Goldstein1997} (\citeyear{Goldstein1997,Goldstein2005a}) [and a discrete analog of~\citet
{Goldstein2006}],
the equilibrium transformation of renewal theory for both exponential
and geometric approximation, and an
extension to negative binomial approximation by~\citet
{Pekoz2011}, \citet{Pekoz2013a} and \citet{Ross2013},
and a transformation for a class of distributions arising in preferential
attachment graphs by \citet{Pekoz2013}. \citet{Luk1994} and
\citet{Nourdin2009} developed Stein's method for gamma
approximation, though the approaches there are quite different from ours.
Theorem~\ref{thm4} is a significant generalization and embellishment
of this previous work.

Using the construction of \citet{Remy1985} for generating uniform random
binary trees, we find some of our urn distributions embedded in random
subtrees of uniform binary trees and plane trees. Moreover, a
well-known bijection
between binary trees and Dyck paths
yields analogous embeddings in some local time and height
statistics of random walk.
By means of these embeddings, we are able to prove convergence to
generalized gamma distributions
with rates for these statistics. These limits and in general the
connection between random walks, trees
and distributions appearing in Brownian motion are typically
understood through classical bijections between trees and walks along
with Donsker's
invariance principle, or through the approach of Aldous' continuum
random tree; see \citet{Aldous1991}.
While these perspectives are both beautiful and powerful,
the mathematical details are intricate and they do not provide
rates of convergence. In this setting, our work can be viewed as a
simple unified approach to understanding the appearance
of these limits in the tree-walk context which has the added benefit of
providing rates of convergence.

In the remainder of the \hyperref[sec11]{Introduction}, we state our urn, tree and walk
results in detail.

\subsection{Generalized gamma distribution}

For $\alpha>0$, denote by $\Ga(\alpha)$ the gamma distribution with
shape parameter $\alpha$ having
density $x^{\alpha-1} e^{-x}/\G(\alpha)\,dx$, $x>0$.

%
%
\begin{definition}[(Generalized gamma distribution)]
For positive real numbers $\alpha$ and $\beta$, we say a random
variable $Z$ has
the \emph{generalized gamma distribution with parameters} $\alpha$
\emph{and} $\beta$ and write $Z\sim\GG(\alpha,\beta)$,
if $Z\stackrel{\mathscr{D}}{=}X^{1/\beta}$, where $X\sim\Ga
(\alpha/\beta)$.
\end{definition}
The density of $Z\sim\GG(\alpha,\beta)$ is easily seen to be
\[
\varphi_{\alpha,\beta}(x) = \frac{\beta x^{\alpha-1}e^{-x^\beta
}}{\Gamma(\alpha/\beta)} \,dx,\qquad x>0,
\]
and for any real $p>-\alpha$, $\IE Z^p = \Gamma((\alpha+p)/\beta
)/\Gamma(\alpha/\beta)$; in particular $\IE Z^\beta=\alpha/\beta$.
The generalized gamma family includes the Rayleigh distribution, $\GG
(2,2)$, the absolute or ``half'' normal distribution, $\GG(1,2)$,
and the standard gamma~distribution, $\GG(\alpha, 1)$.

\subsection{P\'olya urn with immigration}

We now define a variation of P\'olya's urn. An urn starts with black
and white balls and draws are made sequentially. After each draw, the
ball is replaced and another ball of the same color is added to the urn.
Also, after every $l$th draw an additional black ball is added to the
urn. Let ${\mathcal{P}}^l_n(b,w)$ denote the distribution of the
number of white
balls in the urn after $n$ draws have been made when the urn starts
with $b\geq0$ black balls and $w>0$ white balls. Note that for the
case $l=1$ the process is time homogeneous but for $l\geq2$ it is time
inhomogeneous. Define the \emph{Kolmogorov distance} between two
cumulative distribution functions $P$ and $Q$ (or their respective
laws) as
\[
\dk(P,Q)=\sup_x\bigl\llvert P(x)-Q(x)\bigr\rrvert.
\]
The Kolmogorov metric is a standard and natural metric for random
variables on the real line and
is used for statistical inference, for example, in computing \mbox{``$p$-}values''.

%
%
\begin{theorem}\label{thm1}
Let $l,w\geq1$ and let $\urn_n \sim{\mathcal{P}}^{l}_{n}(1,w)$.
Then $\IE\urn_n^k\asymp n^{kl/(l+1)}$ as $n\to\infty$ for any
integer $k\geq0$, and
\begin{eqnarray*}
&&\IE\urn_n^{l+1}\sim n^l w \biggl(
\frac{l+1}{l} \biggr)^{l}.
\end{eqnarray*}
Furthermore, there are constants $c =c_{l,w}$ and $C =C_{l,w}$,
independent of $n$, such that
%
%
\begin{equation}
\label{1} c n^{-l/(l+1)}\leq\dk\bigl(\law(\urn_{n}/
\mu_{n}), \GG(w,l+1) \bigr) \leq C n^{-l/(l+1)},
\end{equation}
where
%
%
\begin{equation}
\label{2} \mu_n=\mu_n(l, w)= \biggl(
\frac{l+1}{w} \IE\urn_n^{l+1} \biggr)^{1/(l+1)}
\sim n^{l/(l+1)}\frac{(l+1)}{l^{l/(l+1)}}.
\end{equation}
\end{theorem}

%
%
\begin{remark}
A direct application of this result is to a preferential attachment
random graph model [see \citet{Barabasi1999}, \citet{Pekoz2013}]
that initially has one node having weight $w$ (thought of as the degree
of that node or a collection of nodes grouped together).
Additional nodes are added sequentially and when a node is added it
attaches $l$ edges, one at a time, directed
{from} it {to} either itself or to nodes in the existing graph
according to the following rule. Each edge attaches to a potential node
with chance proportional to that node's weight at that exact moment,
where incoming edges contribute weight one to a node and each node
other than the initial node is born having initial weight one.
The case where $l=1$ is the usual Barabasi--Albert tree with loops
(though started from a node with initial weight $w$ and no edges).
A moment's thought shows that after an additional $n$ edges have been
added to the graph,
the total weight of the initial node
has distribution ${\mathcal{P}}^{l}_{n}(1,w)$.
\citet{Pekoz2014} extend the results of this paper in this
preferential attachment
context to obtain limits for joint distributions of the weights of nodes.
\end{remark}

%
%
\begin{remark}\label{rem1}
Theorem~\ref{thm1} in the case when $l=1$ is covered by Example~3.1
of \citet{Janson2006}, but without a rate of convergence. The
limit and rate
for the two special cases where $w=l=1$ and $l=1$, $w=2$ are stated in
Theorem~1.1 of \citet{Pekoz2013}; in fact the rate proved there
is $n^{-1/2}\log n $ (there is an error in the last line of the proof
of their Lemma~4.2), but our
approach here yields the optimal rate claimed there.
\end{remark}

%
%
\begin{remark} For $n\geq l$, it is clear that
%
%
\begin{equation}
\label{3} {\mathcal{P}}^l_n(0,w) = {
\mathcal{P}}^l_{n-l}(1,w+l),
\end{equation}
since, if the urn is started without black balls, the progress of the
urn is deterministic until the first immigration. ${\mathcal
{P}}^l_{n}(1,w)$ is
more natural in the context of the proof of Theorem~\ref{thm1} but in
our combinatorial applications, ${\mathcal{P}}^l_n(0,w)$ can be easier
to work
with and so we will occasionally apply Theorem~\ref{thm1}
directly to ${\mathcal{P}}^l_n(0,w)$ via~\eq{3}. Further, in order to easily
switch between these two cases without introducing unnecessary notation
or case distinctions, we define, in accordance with~\eq{3}, ${\mathcal{P}}
^l_{-i}(1,w+l)$ to be a point mass at $ w+l-i$ for all $0\leq i\leq l$.
\end{remark}

%
%
\begin{remark}\label{rem2}
P\'olya urn schemes have a long history and large literature. In brief,
the basic model, in which the urn starts with $w$ white and $b$ black
balls and at each stage
a ball is drawn at random and replaced with $\alpha$ balls of the same color,
was introduced in~\citet{Eggenberger1923} as a model for
disease contagion. The proportion
of white balls converges almost surely to a variable having beta
distribution with parameters $(w/\alpha, b/\alpha)$.
A well-known embellishment [see \citet{Friedman1949}]
is to replace the ball drawn along with $\alpha$ balls of the same
color and $\beta$ of the other color and here if $\beta\neq0$ the
proportion of white balls
almost surely converges to $1/2$; and \citet{Freedman1965} proves
a Gaussian limit theorem for the fluctuation around this limit.

The general case can be encoded by $(\alpha, \beta; \gamma, \delta
)_{b,w}$ where now the urn starts
with $b$ black and $w$ white balls and at each stage a ball is drawn
and replaced; if the ball drawn is black (white), then $\alpha$
($\gamma$)
black balls and $\beta$ ($\delta$) white balls are added.
As suggested by the previous paragraph, the
limiting behavior of the urn can vary wildly depending on the
relationship of the six parameters involved and especially the
Greek letters; even the first-order growth of the number of white balls
is highly
sensitive to the parameters.

A useful tool for analyzing the general
case is to embed the urn process into a multitype branching process
and use the powerful theory available there. This was first
suggested and implemented by \citet{Athreya1968} and has found subsequent
success in many further works; see \citet{Janson2006} and
\citet{Pemantle2007}, and references therein.
An alternative approach that is especially useful when $\alpha$
or $\delta$ are negative
(under certain conditions this leads to a \emph{tenable} urn)
is the analytic combinatorics methods of \citet{Flajolet2005};
see also the \hyperref[sec11]{Introduction} there for further
references.

Note that all of the references of the previous paragraphs
regard homogeneous urn processes and so do not
directly apply to the model of Theorem~\ref{thm1} with $l\geq2$.
In fact, the extensive survey \citet{Pemantle2007} has only a
small section with
a few references regarding time dependent urn models.
Time inhomogeneous urn models do have an extensive statistical
literature due to the their wide usage in the experimental design of
clinical trials
(the idea being that it is ethical to favor experimental treatments
that initially do well
over those that initially do not); see \citet{Zhang2006},
\citet{Zhang2011} and \citet{Bai2002}.
This literature is concerned with models and regimes where
the asymptotic behavior is Gaussian. As discussed in \citet{Janson2006},
it is difficult to characterize nonnormal asymptotic
distributions of generalized P\'olya urns, even in the time homogeneous case.
\end{remark}

%
%
\begin{remark}
There are many possible natural generalizations of the model we study
here, such as
starting with more than one black ball or adding more than one black
ball every $l$th draw.
We have restricted our study to the ${\mathcal{P}}_n^{l}(1,w)$ urn
because these
variations
lead to asymptotic distributions outside the generalized gamma class.
For example, the case ${\mathcal{P}}^1_n(b,w)$ with integer $b\geq1$
is studied
in \citet{Pekoz2013},
where it is shown for $b\geq2$
the limits are powers of products of independent beta and gamma random
variables.
Our main purpose here
is to study the generalized gamma regime carefully and to highlight the
connection between these urn models and random walks and trees.
\end{remark}

\subsection{Applications to sub-tree sizes in uniform binary and plane trees}

Denote by $T^p_n$ a uniformly chosen rooted plane
tree with $n$ nodes, and denote by $T^b_{2n-1}$ a uniformly chosen
binary, rooted plane tree with $2n-1$ nodes, that is, with $n$ leaves
and $n-1$ internal nodes.
It is well known that the number of such trees in both cases is the
Catalan number $C_{n-1}={2n-2\choose n-1}/n$
and that both families of random trees are instances of simply
generated trees; see Examples~10.1 and~10.3 of \citet{janson2012}.

For any rooted tree $T$ let $\operatorname{sp}
^{k}_{\mathrm{Leaf}}(T)$ be the number of
vertices in the minimal spanning tree spanned by the root and $k$
randomly chosen distinct \emph{leaves} of $T$, and let $
\operatorname{sp}^{k}_{\mathrm{Node}}(T)$ be the number of
vertices in the minimal spanning tree
spanned by the root and $k$ randomly chosen distinct \emph{nodes} of $T$.

%
%
\begin{theorem}\label{thm2} Let $\mu_n(1,w)$ be as in \eq{2} of
Theorem~\ref{thm1}.
Then, for any $k\geq1$,
\begin{eqnarray*}
\mbox{(\textup{i})}&\quad&  \dk\bigl(\law\bigl({\operatorname{sp}
^{k}_{\mathrm{Leaf}}\bigl(T^b_{2n-1}\bigr)}/{
\mu_{n-k-1}(1,2k)} \bigr),\GG(2k,2) \bigr) = \bigo\bigl(n^{-1/2}
\bigr),
\\
\mbox{(\textup{ii})}&\quad&  \dk\bigl(\law\bigl({\operatorname{sp}
^{k}_{\mathrm{Node}}\bigl(T^b_{2n-1}\bigr)}/{
\mu_{n-k-1}(1,2k)} \bigr),\GG(2k,2) \bigr) = \bigo\bigl(n^{-1/2}
\bigr),
\\
\mbox{(\textup{iii})}&\quad & \dk\bigl(\law\bigl({2\operatorname{sp}
^{k}_{\mathrm{Node}}
\bigl(T^p_{n}\bigr)}/{\mu_{n-k-1}(1,2k)} \bigr),
\GG(2k,2) \bigr) = \bigo\bigl(n^{-1/2}\log n \bigr).
\end{eqnarray*}
\end{theorem}

%
%
\begin{remark}
The logarithms in (iii) of the theorem and in (iii) and (iv) of
the forthcoming Theorem~\ref{thm3}
are likely an artifact of our analysis, specifically in the use of
Lemma~\ref{lem2}.
\end{remark}

%
%
\begin{remark}\label{rem3}
The limits in the theorem can also be seen using facts about the
Brownian continuum random tree (CRT) due to \citeauthor{Aldous1991} (\citeyear{Aldous1991,Aldous1993}).
Indeed, the trees $T_{2n-1}^b$ and $T_n^p$ can\vspace*{2pt} be understood to
converge in a certain sense to the Brownian CRT. The limit of the
subtrees we study having $k$ leaves can be defined through the
Poisson line-breaking construction as described following Theorem~7.9
of \citet{Pitman2006}:
\[
\begin{tabular}{p{300pt}}
Let $0<\Theta_1<
\Theta_2<\cdots$ be the points of an inhomogeneous Poisson process
on $\IR_{>0}$ of rate $t \,dt$. Break the line $[0,\infty)$ at
points $\Theta_k$. Grow trees $\mathcal{T}_k$ by
letting $\mathcal{T}_1$ be a segment of length $\Theta_1$,
then for $k\geq2$ attaching the segment $(\Theta_{k-1},
\Theta_k]$ as a ``twig'' attached at
a random point of the tree $\mathcal{T}_{k-1}$ formed from the
first $k-1$ segments.
\end{tabular}
\]
The length of this tree is just $\Theta_k$ which is the generalized
gamma limit
of the theorem (up to a constant scaling). In\vspace*{1pt} more detail, if we
jointly generate the vector
$\mathbf{U}_k(n):=(\operatorname{sp}^{1}_{\mathrm
{Leaf}}(T^b_{2n-1}),\ldots,\operatorname{sp}
^{k}_{\mathrm{Leaf}}(T^b_{2n-1}))$ by first selecting $k$
leaves uniformly at random from $T^b_{2n-1}$, then labeling the
selected leaves $1,\dots,k$, and then
setting $\operatorname{sp}^{i}_{\mathrm
{Leaf}}(T^b_{2n-1})$ to be the number of nodes in
the tree spanned by the root and the leaves labeled $1,\dots,i$, then
the CRT theory implies $n^{-1/2}\mathbf{U}_k(n)$ converges in
distribution to $(\Theta_1,\ldots,\Theta_k)$; see also \citet
{Pekoz2014} for
a proof of this fact with a rate of convergence.

\citeauthor{Panholzer2004} [(\citeyear{Panholzer2004}), Theorem~6] provides local
limit theorems for\break
$\operatorname{sp}^{k}_{\mathrm{Node}}(T^b_{2n-1})$
and $\operatorname{sp}^{k}_{\mathrm{Node}}(T^p_{n})$,
from which the distributional
convergence to the generalized gamma can be seen. It may be possible to
obtain such (and other) local limits results
using our Kolmogorov bounds and the approach of \citet
{Rollin2012a}, but
in any case the convergence rates in the Kolmogorov metric in
Theorem~\ref{thm2} appear to be new.
\end{remark}

\subsection{Applications to occupation times and heights in random
walk, bridge and meander}

Consider the one-dimensional simple symmetric random walk $S_n =
(S_n(0),\dots,S_n(n))$ of length $n$ starting at the origin. Define
\[
L_n = \sum_{i=0}^{n} \I
\bigl[S_n (i) =0\bigr]
\]
to be the number of times the random walk visits the origin by
time $n$. Let
\[
L^b_{2n} \sim\law\bigl(L_{2n} \bmid
S_{2n}(0)= S_{2n}(2n)=0 \bigr)
\]
be the local time of a random walk bridge, and define random walk
excursion and meander by
\begin{eqnarray*}
S_{2n}^e & \sim&\law\bigl(S_{2n} \bmid
S_{2n}(0)=0, S_{2n}(1)>0,\dots,S_{2n}(2n-1) > 0,
S_{2n}(2n)=0 \bigr),
\\
S_n^m & \sim&\law\bigl(S_n \bmid
S_{n}(0)= 0, S_{n}(1)>0,\dots,S_{n}(n) > 0
\bigr).
\end{eqnarray*}

%
%
\begin{theorem}\label{thm3}
Let $\mu_n(1,w) = \mu_n$ be as in \eq{2} of Theorem~\ref{thm1} and let
$K$ be uniformly distributed on $\{0,\dots,2n\}$ and independent of
$(S^e_{2n}(u))_{u=0}^{2n}$. Then
\begin{eqnarray*}
\mbox{(\textup{i})} &\quad& \dk\bigl(\law\bigl({ L_{n} }/{\mu_{{\lfloor
n/2\rfloor}}(1,1)}
\bigr),\GG(1,2) \bigr) = \bigo\bigl(n^{-1/2} \bigr),
\\
\mbox{(\textup{ii})} &\quad& \dk\bigl(\law\bigl({ L^b_{2n} }/{\mu
_{n-1}(1,2)} \bigr),\GG(2,2) \bigr) = \bigo\bigl(n^{-1/2}
\bigr),
\\
\mbox{(\textup{iii})} &\quad& \dk\bigl(\law\bigl({2S^e_{2n}(K)}/{\mu
_{n-2}(1,2)} \bigr),\GG(2,2) \bigr) = \bigo\bigl(n^{-1/2}\log
n \bigr),
\\
\mbox{(\textup{iv})} &\quad& \dk\bigl(\law\bigl({ 2S^m_{n} }/{
\mu_{{\lfloor
(n-1)/2\rfloor}-1}(1,2)} \bigr),\GG(2,2) \bigr) = \bigo\bigl(n^{-1/2}
\log n \bigr).
\end{eqnarray*}
\end{theorem}

%
%
\begin{remark}\label{rem4}
An alternative viewpoint of the limits in Theorem~\ref{thm3}
is that they are the analogous
statistics of Brownian motion, bridge, meander and excursion which can
be read from \citet{Chung1976} and \citet{Durrett1977};
these Brownian fragments are the
weak limits in the path space $C[0,1]$ of the walk fragments we study;
see \citet{Csaki1981}.
For example, if $B_t$, $t\geq0$, is a standard Brownian
motion and $(L_t^x, t\geq0,x\in\IR)$ its local time at level $x$ up
to time $t$,
then L\'evy's identity implies that $L_1^0$ is equal in distribution to
the maximum of $B_t$ up to time $1$, which is equal in distribution to
a half normal distribution; see also \citet{Borodin1987}.

To check the remaining limits of the theorem [which are Rayleigh, $\GG
(2,2)$], we can use \citet{Pitman1999b}, equation~(1) [see also
\citet
{Borodin1989}] which states that for $y>0$ and $b\in\IR$,
%
%
\begin{eqnarray} \label{4}
\quad && \IP\bigl[L_1^x\in dy, B_1\in db\bigr]
\nonumber\\[-8pt]\\[-8pt]\nonumber
&&\qquad =
\frac{1}{\sqrt{2\pi}} \bigl(\llvert x\rrvert+\llvert b-x\rrvert+y
\bigr) \exp
\biggl(-\frac{1}{2}\bigl(\llvert x\rrvert+\llvert b-x\rrvert+y
\bigr)^2 \biggr) \,dy \,db.
\end{eqnarray}
Roughly, for the local time of Brownian bridge at time $1$ we
set $b=x=0$ in~\eq{4} and multiply by $\sqrt{2\pi}$ (due to
conditioning $B_1=0$)
to see the Rayleigh density.
For the final time of Brownian meander, we set $x=y=0$ in~\eq{4} and
multiply by $\sqrt{\pi/2}$ (due to conditioning $L_1^0=0$),
and note here that $b\in\IR$ so by symmetry we restrict $b>0$ and
multiply by $2$ to get back to the Rayleigh density.
Finally, due to Vervaat's transformation [\citet{Vervaat1979}],
the height of standard Brownian excursion at a uniform random time has
the same distribution as the maximum of Brownian bridge on $[0,1]$. If
we denote by $M$ this maximum, then for $x>0$
we apply~\eq{4} to obtain
\begin{eqnarray*}
\IP[M>x]&=&\IP\bigl[L_1^x>0\mid B_1=0\bigr] =
\int_0^\infty(2x+y) \exp\biggl(-\frac{1}{2}
(2x+y )^2 \biggr) \,dy =e ^{-2x^2},
\end{eqnarray*}
which is the claimed Rayleigh distribution.

With the exception of the result for $L_n$, which can be read from
\citet{Chung1949}, inequality~(1) or \citet{Dobler2013}, Theorem~1.2, the convergence rates
appear to be new.
\end{remark}

\subsection{A general approximation result via distributional transforms}

Theorem~\ref{thm1} follows from a general approximation result using
Stein's method, a distributional transformation
with a corresponding fixed point equation, which we describe now. We
first generalize the size
bias transformation used in Stein's method and appearing naturally in
many places; see~\citet{Arratia2013} and \citet{Brown2006}.

%
%
\begin{definition}
Let $\beta>0$ and let $W$ be a nonnegative random variable with
finite $\beta$th moment. We say a random
variable $W^{(\beta)}$ has the~\emph{$\beta$-power bias distribution
of $W$}, if
%
%
\begin{equation}
\label{5} \IE\bigl\{W^\beta f(W) \bigr\} =\IE W^\beta\IE
f \bigl(W^{(\beta
)} \bigr)
\end{equation}
for all $f$ for which the expectations exist.
\end{definition}

In what follows, denote by $\B(a,b)$ the beta distribution with
parameters \mbox{$a,b>0$}.

%
%
\begin{definition}\label{def1}
Let $\alpha>0$ and $\beta>0$ and let $W$ be a positive random
variable with $\IE W^\beta= \alpha/\beta$. We\vspace*{2pt} say that $W^{*}$
has the \emph{$(\alpha,\beta)$-generalized equilibrium distribution
of $W$}\/ if,
for $V_\alpha\sim\B(\alpha,1)$ independent of $W^{(\beta)}$, we have
%
%
\begin{equation}
\label{6} W^*\ed V_\alpha W^{(\beta)}.
\end{equation}
\end{definition}

%
%
\begin{remark}\label{7}
\citet{Pakes1992}, Theorem~5.1 and \citet{Pitman2012}, Proposition~9 show that for a
positive random
variable $W$ with $\IE W^\beta=\alpha/\beta$, we have $W\sim\GG
(\alpha,\beta)$ if and only if $W\ed
W^*$. The $(1,2)$-generalized equilibrium distributional transformation
is the nonnegative
analog of the zero bias transformation of which the standard normal
distributions are unique fixed points;
see \citet{Chen2011}, Proposition~2.3, page~35, where
the $2$-power
bias transformation
is appropriately called ``square'' biasing; thus $\GG(1,2)$ is the
absolute normal distribution.
\end{remark}

%

%
%
\begin{theorem}\label{thm4} Let $W$ be a positive random variable
with $\IE W^\beta= \alpha/\beta$ for some integers $\alpha\geq1$
and $\beta\geq1$. Let $W^*$ be a
random variable constructed on the same probability space having
the $(\alpha,\beta)$-generalized equilibrium distribution of~$W$.
Then there is a constant $c>0$ depending only on $\alpha$ and $\beta$
such that, for all $0<b\leq1$,
%
%
\begin{equation}
\label{8} \dk\bigl(\law(W), \GG(\alpha,\beta) \bigr) \leq c \bigl(b+
\IP
\bigl[\bigl\llvert W-W^*\bigr\rrvert>b\bigr] \bigr).
\end{equation}
\end{theorem}

%
%
\begin{remark} Let $X$ and $Y$ be two random variables and let
\[
\dlp\bigl(\law(X),\law(Y)\bigr) = \inf\bigl\{b: \IP[X\leq t]\leq\IP
[Y\leq
t+b] + b\mbox{ for all }t\in\IR\bigr\}
\]
be the L\'evy--Prokohorov distance between $\law(X)$ and $\law(Y)$. A
theorem due to Strassen [see, e.g., \citet{Dudley1968}, Theorem~2] says that there is a
coupling $(X,Y)$ such
that $\IP[\llvert X-Y\rrvert >\rho]\leq\rho$, where $\rho
= \dlp(\law
(X),\law(Y))$. Hence, since \eq{8} holds for \emph{all} $b$ and
\emph{all} couplings of $W$ and $W^*$, it follows in particular that
\[
\dk\bigl(\law(W), \GG(k,r) \bigr) \leq2c \dlp\bigl(\law(W),\law\bigl
(W^*\bigr)
\bigr).
\]
\end{remark}

The paper is organized as follows. In Section~\ref{sec1}, we
embed our urn model into random trees via R\'emy's algorithm
and prove Theorem~\ref{thm2}. In Section~\ref{sec2a}, we describe
the various connections between trees and walk paths and then
prove Theorem~\ref{thm3}. In Section~\ref{sec2}, we use Theorem~\ref{thm4}
to prove Theorem~\ref{thm1}, and finally in Section~\ref{sec3} we
develop a general
formulation of Stein's method for log concave densities and prove
Theorem~\ref{thm4}.

\section{Random trees: Proof of Theorem~\texorpdfstring{\protect\ref{thm2}}{1.8}}
\label{sec1}



\makeatletter
\def\drawdot#1{\draw(#1) node [draw,circle,fill=black,inner
sep=0.5pt] {}}

\tikzset{
int/.style = {circle, white, font=\bfseries,
draw=black, fill=black,inner sep=0.5pt},
ext/.style = {circle, draw, inner sep=0.5pt},
arr/.style = {shorten >=1mm,shorten <=1mm},
arrd/.style = {arr,densely dotted},
arrn/.style = {sloped,pos=0.5}
}
\def\@intnode#1{\begin{tikzpicture}\tiny
\node at (0,0) [int,inner sep=1pt,minimum size=3.5ex] {#1};
\end{tikzpicture}}
\def\intnode#1{\protect\raisebox{-0.3ex}{\protect\@intnode#1}}
\def\@extnode#1{\protect{\begin{tikzpicture}\tiny
\node at (0,0) [ext,inner sep=1pt,minimum size=3.5ex] {#1};
\end{tikzpicture}}}
\def\extnode#1{\protect\raisebox{-0.3ex}{\protect\@extnode#1}}
\def\pdot{\phantom{$\cdot$}}
\def\fixbox#1{\hbox to 1.1em {\hfil #1\hfil}}
\makeatother

\subsection{R\'emy's algorithm for decorated binary trees}

R{\'e}my (\citeauthor{Remy1985}) introduced an elegant recursive algorithm to
construct uniformly chosen decorated binary trees, where by ``decorated''
we mean that the leaves are labeled. This algorithm is the key
ingredient to our
approach as it relates to the urn schemes of Theorem~\ref{thm1}. All
trees are assumed to be plane trees throughout, and we will think of
the tree as growing
downward with the root at the top. We will refer to the ``left'' and
``right'' child of a node as seen from the readers point of view looking
at the tree growing downward.

\subsubsection*{R\'emy's algorithm for decorated binary trees (see Figure~\protect\ref{fig1})}
Let $n\geq1$ and assume
that $T^b_{2n-1}$ is a uniformly chosen decorated binary tree with $n$
leaves, labeled from $1$ to $n$. To obtain a uniformly chosen
decorated binary tree $T^b_{2n+1}$ with $n+1$ leaves do the following:
\begin{longlist}
\item[\textit{Step}~1.] Choose a node uniformly at random; call it $X$.
Remove $X$ and its sub-tree, insert a new internal node at this
position, call it $Y$, and attach $X$ and its sub-tree,
to $Y$.
\item[\textit{Step}~2.] With probability $1/2$ each, do either of the
following:
\begin{longlist}[(a)]
\item[(a)] Attach new leaf
with label $n+1$ as the left-child to $Y$ (making $X$ the right-child
of $Y$).

\item[(b)] Attach new leaf
with label $n+1$ as the right-child to $Y$ (making $X$ the left-child
of $Y$).
\end{longlist}
\end{longlist}

%
%
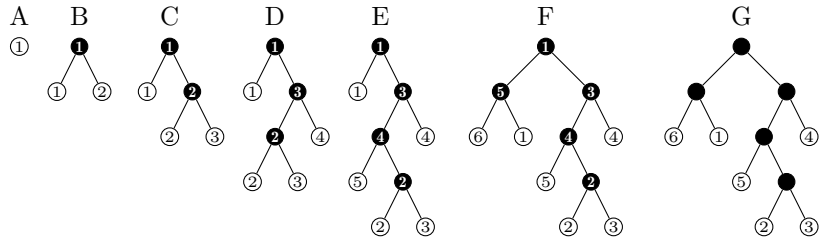
\begin{figure}[b]
\hfill
\begin{tikzpicture}[scale=0.4]\tiny
\node at (0,1) {\small A};
\node at (0,0) [ext] {1}
;
\node at (2,1) {\small B};
\node at (2,0) [int] {1}
child { node [ext] {1} }
child { node [ext] {2} }
;
\node at (5,1) {\small C};
\node at (5,0) [int] {1}
child { node [ext] {1} }
child { node [int] {2}
child { node [ext] {2} }
child { node [ext] {3} }
}
;
\node at (8.5,1) {\small D};
\node at (8.5,0) [int] {1}
child { node [ext] {1} }
child { node [int] {3}
child { node [int] {2}
child { node [ext] {2} }
child { node [ext] {3} }
}
child { node [ext] {4} }
}
;
\node at (12,1) {\small E};
\node at (12,0) [int] {1}
child { node [ext] {1} }
child { node [int] {3}
child { node [int] {4}
child { node [ext] {5} }
child { node [int] {2}
child { node [ext] {2} }
child { node [ext] {3} }
}
}
child { node [ext] {4} }
}
;
\tikzstyle{level 1}=[sibling distance=3cm]
\tikzstyle{level 2}=[sibling distance=1.5cm]
\node at (17.5,1) {\small F};
\node at (17.5,0) [int] {1}
child { node [int] {5}
child { node [ext] {6} }
child { node [ext] {1} }
}
child { node [int] {3}
child { node [int] {4}
child { node [ext] {5} }
child { node [int] {2}
child { node [ext] {2} }
child { node [ext] {3} }
}
}
child { node [ext] {4} }
}
;
\tikzstyle{level 1}=[sibling distance=3cm]
\tikzstyle{level 2}=[sibling distance=1.5cm]
\node at (24,1) {\small G};
\node at (24,0) [int] {\phantom{1}}
child { node [int] {\phantom{1}}
child { node [ext] {6} }
child { node [ext] {1} }
}
child { node [int] {\phantom{1}}
child { node [int] {\phantom{1}}
child { node [ext] {5} }
child { node [int] {\phantom{1}}
child { node [ext] {2} }
child { node [ext] {3} }
}
}
child { node [ext] {4} }
}
;
\end{tikzpicture}
\hfill\hfill
\caption{Illustration of R\'emy's algorithm to construct
decorated binary trees. Internal nodes are represented by
black circles, and leaves by white circles. For the sake of clarity,
we keep the internal nodes labeled, but these labels will be removed
in the
final step. We start with Tree~A, the trivial tree. The step from
Tree~A to Tree~B is
\extnode{1}L, where ``\extnode{1}'' indicates the node that was chosen,
and ``L'' indicates that this node, along with its sub-tree, is attached
to the new node as the left-child. Using
this notation, the remaining steps to get to Tree~F are \extnode{2}L,
\intnode{2}L, \intnode{2}R, \extnode{1}R. Then remove the labels of the
internal nodes to obtain Tree~G, the final tree.}
\label{fig1}
\end{figure}

This recursive algorithm produces uniformly chosen decorated binary trees,
since every decorated binary tree can be obtained in exactly one way,
and since at every iteration every new tree is chosen with equal
probability. By removing the labels, we obtain a uniformly chosen
undecorated binary tree.

Figure~\ref{fig1} illustrates the algorithm by means of an example.
We have labeled the internal nodes to make the procedure clearer, but
it is
important to note that these internal labels are not chosen uniformly among
all such labelings and, therefore, have to be removed at the final
step (to see
this, note that Tree~C in Figure~\ref{fig1} cannot be obtained through
R\'emy's algorithm if the labels of the two internal nodes are switched).

\subsection{Sub-tree sizes}

\subsubsection*{Spanning trees in binary trees} R\'emy's algorithm creates
a direct embedding of a P\'olya urn into a decorated binary tree.
The following result is the key to our tree and walk results and is
utilized via embeddings and bijections in this section and the following.
The result is implicit
in a construction of \citet{Pitman2006}, Exercise 7.4.11.

%
%
\begin{proposition}\label{prop1} For any $n\geq k\geq1$,
\[
\operatorname{sp}^{k}_{\mathrm{Leaf}}
\bigl(T^b_{2n-1}\bigr) \sim{\mathcal{P}}^{1}_{n-k}(0,2k-1)
= {\mathcal{P}} ^{1}_{n-k-1}(1,2k).
\]
\end{proposition}

\begin{pf} Since the labeling is random, we may consider
the tree spanned by the root and the leaves labeled $1$ to $k$ of a
uniformly chosen decorated binary tree,
rather than the tree spanned by the root and $k$ uniformly chosen
leaves of a random binary tree, cf. \citet{Pitman2006}, Exercise~7.4.11.
Start with a uniformly chosen decorated binary tree $T^b_{2k-1}$
with $k$ leaves and note that the tree spanned
by the root and leaves $1$ to $k$ is the whole tree. Now identify
the $2k-1$ nodes of $T^b_{2k-1}$ with $2k-1$ white balls in an urn that
has no black balls.
If the randomly chosen node in a given step in R\'emy's algorithm is
outside the current spanning tree, two nodes will be added
outside the current spanning tree and we identify this as adding two
black balls to the urn. If the randomly chosen node is in the current
spanning tree, one node will be added to the current spanning tree and
another outside of it, and we identify this as adding one black and one
white ball to the urn.

Since we started with a tree of $2k-1$ nodes, we need $n-k$ steps to
obtain a tree with $2n-1$ nodes.
Hence, the size of the spanning tree is equal to the number of white
balls in the urn, which follows the distribution ${\mathcal
{P}}^{1}_{n-k}(0,2k-1)$.
\end{pf}

As a consequence of Proposition~\ref{prop1}, whenever a quantity of
interest can be coupled closely to $\operatorname{sp}
^{k}_{\mathrm{Leaf}}(T^b_{2n-1})$,
rates of convergence can be obtained if the closeness of the coupling
can be quantified appropriately. In this section, we give two tree
examples of this approach. Since the distribution ${\mathcal{P}}
^{1}_{n-k}(0,2k-1)$ will appear over and over again, we set $N^*_{j,k}
\sim{\mathcal{P}}^{1}_{j}(0,2k-1)$
in what follows. We use\vspace*{1pt} the notation $\Gez(p)$, $\Geo(p)$, $\Be(p)$,
$\Bi(n,p)$ to, respectively, denote the geometric with supports starting
at zero and one, Bernoulli and binomial distributions. For a
nonnegative integer-valued random variable $N$, we also use the
notation $X\sim\Bi(N,p)$ to denote that $X$ is distributed as a
mixture of binomial distributions such that $\law(X\mid N=n)=\Bi(n,p)$.

We now make a simple, but important observation about the edges in the
spanning tree.

%
%
\begin{lemma}\label{lem1} Let $1\leq k\leq n$ and $T^b_{2n-1}$ be a
uniformly chosen binary tree with $n$ leaves and consider the tree
spanned by the root and $k$ uniformly chosen distinct leaves.
Let $M_{k,n}$ be the number of edges in this spanning tree that connect
a node to its left-child (\hspace*{-1pt}``left-edges''). Conditional on the spanning
tree having $N_{n-k,k}^*$ nodes,
%
%
\begin{equation}
\label{9} M_{k,n} - (k-1) \sim\Bi\bigl(N_{n-k,k}^*-(2k-1),1/2
\bigr).
\end{equation}
\end{lemma}
\begin{pf} We use R\'emy's algorithm and induction over $n$. Fix $k\geq
1$. For $n=k$ note that the spanning tree is the whole tree
with $N^*_{0,k}=2k-1$ nodes and $2(k-1)$ edges. Since half of the edges
must connect a node to the left-child, $M_{k,k} = k-1$ which is~\eq{9}
and this proves the base case. Assume now that~\eq{9} is true for
some $n\geq k$. Two things can happen when applying R\'emy's algorithm:
either the current spanning tree is not changed, in which
case $N^*_{n-k+1,k} = N^*_{n-k,k}$ and $M_{k,n+1} = M_{k,n}$, and
hence~\eq{9} holds by the induction hypothesis, or one node and one
edge are inserted into the spanning tree, in which case $N^*_{n-k+1,k}
= N^*_{n-k,k} + 1$ and $M_{k,n+1} = M_{k,n} + J$ with $J\sim\Be(1/2)$
independent of all else. In the latter case, using the induction
hypothesis, $M_{k,n} + J -(k-1) \sim\Bi(N^*_{n-k,k}-(2k-1)+1,1/2) =
\Bi(N^*_{n-k+1,k}-(2k-1),1/2)$,
which is again~\eq{9}. This concludes the induction step.
\end{pf}

%
%
\begin{proposition}\label{prop2} Let $n\geq k\geq1$ and
let $N^*_{j,k}\sim{\mathcal{P}}^{1}_{j}(0,2k-1)$. There exist nonnegative,
integer-valued random variables $Y_1,\dots,Y_k$ such that, for each $i$,
%
%
\begin{equation}
\IP\bigl[Y_i > m\mid N^*_{n-k,k}\bigr] \leq2^{-m}
\qquad\mbox{for all }m\geq0,\label{10}
\end{equation}
and such that for
%
%
\begin{equation}
\label{12} X_{n,k}:= N^*_{n-k,k} - \sum
_{i=1}^k Y_k
\end{equation}
we have
%
%
\begin{equation}
\label{11} \dtv\bigl(\law\bigl(\operatorname{sp}
^{k}_{\mathrm
{Node}}
\bigl(T^b_{2n-1}\bigr) \bigr),\law(X_{n,k}) \bigr)
\leq\frac{k}{2n}+\frac{(k-1)^2}{2n-k+1},
\end{equation}
where $\dtv$ denotes total variation distance.

For $k=1$ we have the explicit representation
%
%
\begin{equation}
\label{13} \law\bigl(\operatorname{sp}^{1}_{\mathrm
{Node}}
\bigl(T^b_{2n-1}\bigr) \bigr) = \law(X_{n,1} \mid
X_{n,1} > 0),
\end{equation}
where $Y_1\sim\Gez(1/2)$ is independent of $N^*_{n-1,1}$.
\end{proposition}

\begin{pf} We first prove~\eq{13}. We start by regarding $T^b_{2n-1}$
as being ``planted'', that is, we think of the root node as being the
left-child of a ``ground node'' (which itself has no right-child). We
also think of the ground node as being internal. Furthermore, we think
of the minimal spanning tree between the ground node and the root node
as being empty, hence its size as being $0$. We first construct a
pairing between leaves and internal nodes as follows (see Figure~\ref
{fig2}). Pick a leaf and follow the path from that leaf toward the
ground node and pair the leaf with the first parent of a left-child
encountered in that path. Equivalently, pick an internal node and, in
direction away from the ground node, first follow the left child of
that internal node, and then keep following the right child until
reaching a leaf. In particular, with this algorithm, if a selected leaf
is a left-child it is assigned directly to its parent and the
right-most leaf is assigned to the ground node. The fact that this
description is indeed a pairing follows inductively by considering the
left and right subtrees connected to the root, whereby the left subtree
uses the root of the tree as its ground node.

%
%
\begin{figure}
\def\up{{\tiny$+1$}}
\def\down{{\tiny$-1$}}
\hfill
\begin{tikzpicture}[scale=0.4]\tiny
\tikzstyle{level 1}=[sibling distance=4.5cm, dashed]
\tikzstyle{level 2}=[sibling distance=4.5cm, solid]
\tikzstyle{level 3}=[sibling distance=2.5cm]
\node at (-1,-2) [circle,draw=black,densely dashed,inner sep=0.5pt]
(I0) {\phantom0}
child { node at (-2,0.5) [int,solid] (I1) {\phantom1}
child { node [int] (I5) {\phantom5}
child { node [int] (I6) {\phantom6}
child { node [ext] (E7) { \phantom7 } }
child { node [ext] (E6) { \phantom6 } }
}
child { node [ext] (E1) {\phantom1} }
}
child { node [int] (I3) {\phantom3}
child { node [int] (I4) {\phantom4}
child { node [ext] (E5) {\phantom5} }
child { node [int] (I2) {\phantom2}
child { node [ext] (E2) {\phantom2} }
child { node [ext] (E3) {\phantom3} }
}
}
child { node [ext] (E4) {\phantom4} }
}
};
\draw[latex-latex,arr] (E7).. controls +(up:10mm) and +(190:10mm).. (I6);
\draw[latex-latex,arr] (E6).. controls +(115:15mm) and +(245:15mm).. (I5);
\draw[latex-latex,arr] (E1).. controls +(110:14mm) and +(230:14mm).. (I1);
\draw[latex-latex,arr] (E5).. controls +(up:10mm) and +(190:10mm).. (I4);
\draw[latex-latex,arr] (E2).. controls +(up:10mm) and +(190:10mm).. (I2);
\draw[latex-latex,arr] (E3).. controls +(117:30mm) and +(250:15mm).. (I3);
\draw[latex-latex,arr] (E4).. controls +(100:20mm) and +(240:15mm).. (I0);
\end{tikzpicture}
\hfill\hfill
\caption{Pairing up leaves and internal nodes in planted binary trees.
Note that the right-most leaf in the tree is paired up with the ``ground node''.}
\label{fig2}
\end{figure}

Recall that we are considering the case $k=1$. Now, instead of choosing
a node uniformly at random among the $2n$ nodes of the planted tree
(the ground node included), we may equivalently choose Leaf~1 with
probability $1/2$, or choose the internal node paired with Leaf~1 with
probability $1/2$. Denote by $X_n$ the number of nodes in the path from
the chosen node to the root, denote by $J$ the indicator of the event
that we choose an internal node, and denote by $N^*_{n-1,1}$ the number
of nodes in the path from Leaf~1 up to the root. From Proposition~\ref
{prop1} with $k=1$, we have that $N^*_{n-1,1}\sim{\mathcal{P}}^1_{n-1}(0,1)$.
If $J_1=0$, then $X_{n,1} = N^*_{n-1,1}$. If $J_1=1$, the number of
nodes in the path to the root is that of Leaf~1 minus the number of
nodes until the first parent of a left-child in the path is
encountered. Considering Lemma~\ref{lem1}, given $N^*_{n-1,1}$, the
number of left-edges are $N^*_{n-1,1}$ independent coin tosses with
success probability $1/2$, hence, if $\tilde Y_1$ is the time until the
first parent of a left-child is encountered, we have $\tilde Y_1\sim
\Geo
(1/2)$, truncated at $N^*_{n-1,1}$. Thus, if $J_1=1$, we have $X_{n,1}
= N^*_{n-1,1} - \tilde Y_1\wedge N^*_{n-1,1}$. Putting the two cases
together we obtain the representation $X_n = N^*_{n-1,1} - (J_1\tilde Y_1)
\wedge N^*_{n-1,1}$, which has the same distribution as
%
%
\begin{equation}
\label{14} N^*_{n-1,1} - Y_1\wedge N^*_{n-1,1},
\end{equation}
since $J_1\tilde Y_1\sim\Gez(1/2)$. As $X_{n,1}$ is zero if and only if
the ground node was paired with Leaf~1 (i.e., Leaf~1 being the right
most leaf) and $J_1=1$, conditioning on $X_{n,1}$ being positive is
equivalent to conditioning on choosing any node apart from the ground
node, which concludes~\eq{13}.

Now, let $k$ be arbitrary. In a first step, instead of choosing $k$
distinct nodes at random, choose $k$ distinct leaves at random and, for
each leaf, toss a fair coin $J_i$, $i=1,\dots,k$, to determine whether
to choose the leaf or its internal partner, similar to the
case $k=1$. Denote by $N^*_{n-k,k}$ the number of nodes in the minimal
spanning tree spanned by Leaves~1 to $k$ and the root, and denote
by $X_{n,k}$ the number of nodes in the minimal spanning tree spanned
by the leaves or paired nodes and the root (if one of the chosen nodes
is the ground node, then ignore that node in determining the minimal
spanning tree). It is easy to see through two coupling arguments that
choosing the nodes in this different way introduces a total variation
error of at most
\[
\left[1-\pmatrix{2n-1\cr  k}\Big/ \pmatrix{2n\cr k} \right] + \Biggl[1-\prod
_{i=1}^{k-1} \biggl(1-\frac{i}{2n-i} \biggr)
\Biggr];
\]
the first term stems from the possibility of choosing the ground node,
and the second term from restricting the $k$ nodes to be from different
pairings. From this, \eq{11} easily follows.

It remains to show~\eq{10} and~\eq{12}. For~\eq{12}, for
each $i=1,\dots,k$, let $N^{(i)}_{n-1}$ be the number of nodes in the
path from leaf $i$ up to the root, and let $Y'_i=J_i\tilde Y_i$ be the
geometric random variable from the representation~\eq{14}. With $Y_i =
Y'_i \wedge N^{(i)}_{n-1}$, we hence have
\[
X_{n,k} = N^*_{n-k,k} - \sum_{i=1}^k
Y'_i \wedge N^{(i)}_{n-1} =
N^*_{n-k,k} - \sum_{i=1}^k
Y_i.
\]
It is not difficult to check that $Y_i$ and $N^*_{n-k,k} -
N_{n-1}^{(i)}$ are conditionally independent given $N_{n-1}^{(i)}$.
For~\eq{10}, notice that $\IP[Y_i>m\bmid N_{n-1}^{(i)} ] =
\I
[m < N_{n-1}^{(i)} ]2^{-m}$. Hence,
\begin{eqnarray*}
\IP\bigl[Y_i>m\bmid N_{n-k,k}^* \bigr] & =& \IE\bigl\{\IP
\bigl[Y_i>m\bmid N^*_{n-k,k},N_{n-1}^{(i)}-N^*_{n-k,k}
\bigr]\bmid N_{n-k,k}^* \bigr\}
\\
& =& \IE\bigl\{\IP\bigl[Y_i>m\bmid N^{(i)}_{n-1}
\bigr]\bmid N_{n-k,k}^* \bigr\} \leq2^{-m}.
\end{eqnarray*}\upqed
\end{pf}

\subsubsection*{Uniform plane tree}

It is well known that there are $n!C_{n-1}$ decorated binary trees of
size $2n-1$ as well as labeled plane trees of size $n$ nodes,
where $C_1,C_2,\dots$ are the Catalan numbers. There are various ways
to describe bijections between the two sets. We first give a direct
algorithm to construct a plane tree from a binary tree; see Figure~\ref{fig3}.

%
%
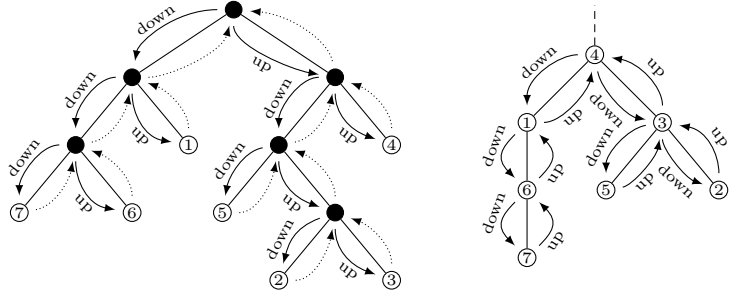
\begin{figure}[b]
\def\up{{\tiny down}}
\def\down{{\tiny up}}
\begin{center}
\begin{tikzpicture}[scale=0.6]\tiny
\tikzstyle{level 1}=[sibling distance=4.5cm]
\tikzstyle{level 2}=[sibling distance=2.5cm]
\node at (0,0) [int] (I1) {\phantom1}
child { node [int] (I5) {\phantom5}
child { node [int] (I6) {\phantom6}
child { node [ext] (E7) { 7 } }
child { node [ext] (E6) { 6 } }
}
child { node [ext] (E1) { 1} }
}
child { node [int] (I3) {\phantom3}
child { node [int] (I4) {\phantom4}
child { node [ext] (E5) { 5} }
child { node [int] (I2) {\phantom2}
child { node [ext] (E2) { 2} }
child { node [ext] (E3) { 3} }
}
}
child { node [ext] (E4) { 4} }
}
;
\draw[-latex,arr] (I1).. controls +(left:13mm) and +(80:8mm).. (I5)
node[above,arrn] {\up};
\draw[-latex,arr] (I5).. controls +(left:8mm) and +(up:8mm).. (I6)
node[above,arrn] {\up};
\draw[-latex,arr] (I6).. controls +(left:8mm) and +(up:8mm).. (E7)
node[above,arrn] {\up};
\draw[-latex,arrd] (E7).. controls +(10:8mm) and +(260:8mm).. (I6);
\draw[-latex,arr] (I6).. controls +(down:8mm) and +(170:10mm).. (E6)
node[below,arrn,pos=0.6] {\down};
\draw[-latex,arrd] (E6).. controls +(up:8mm) and +(340:8mm).. (I6);
\draw[-latex,arrd] (I6).. controls +(10:8mm) and +(260:8mm).. (I5);
\draw[-latex,arr] (I5).. controls +(down:8mm) and +(170:10mm).. (E1)
node[below,arrn,pos=0.6] {\down};
\draw[-latex,arrd] (E1).. controls +(up:8mm) and +(340:8mm).. (I5);
\draw[-latex,arrd] (I5).. controls +(right:8mm) and +(255:10mm).. (I1);
\draw[-latex,arr] (I1).. controls +(down:8mm) and +(170:10mm).. (I3)
node[below,arrn] {\down};
\draw[-latex,arr] (I3).. controls +(left:8mm) and +(up:8mm).. (I4)
node[above,arrn,pos=.6] {\up};
\draw[-latex,arr] (I4).. controls +(190:8mm) and +(80:8mm).. (E5)
node[above,arrn] {\up};
\draw[-latex,arrd] (E5).. controls +(right:8mm) and +(260:8mm).. (I4);
\draw[-latex,arr] (I4).. controls +(down:8mm) and +(160:10mm).. (I2)
node[below,arrn,pos=0.65] {\down};
\draw[-latex,arr] (I2).. controls +(200:10mm) and +(75:8mm).. (E2)
node[above,arrn,pos=0.6] {\up};
\draw[-latex,arrd] (E2).. controls +(right:8mm) and +(260:8mm).. (I2);
\draw[-latex,arr] (I2).. controls +(down:8mm) and +(160:10mm).. (E3)
node[below,arrn,pos=0.65] {\down};
\draw[-latex,arrd] (E3).. controls +(up:8mm) and +(340:8mm).. (I2);
\draw[-latex,arrd] (I2).. controls +(up:8mm) and +(340:8mm).. (I4);
\draw[-latex,arrd] (I4).. controls +(10:8mm) and +(260:8mm).. (I3);
\draw[-latex,arr] (I3).. controls +(down:8mm) and +(160:10mm).. (E4)
node[below,arrn,pos=0.65] {\down};
\draw[-latex,arrd] (E4).. controls +(up:8mm) and +(340:8mm).. (I3);
\draw[-latex,arrd] (I3).. controls +(100:8mm) and +(355:13mm).. (I1);

\tikzstyle{level 1}=[sibling distance=3cm]
\tikzstyle{level 2}=[sibling distance=2.5cm]

\node at (8,0.25) (R) {};
\node at (8,-1) [ext] (C4) {4}
child { node [ext] (C1) {1}
child { node [ext] (C6) {6}
child { node [ext] (C7) {7} }
}
}
child { node [ext] (C5) {3}
child { node [ext] (C2) {5} }
child { node [ext] (C3) {2} }
}
;
\draw[-latex,arr] (C4).. controls +(left:8mm) and +(up:10mm).. (C1)
node[above,arrn] {\up};
\draw[-latex,arr] (C1).. controls +(225:8mm) and +(135:10mm).. (C6)
node[above,arrn,pos=0.4] {\up};
\draw[-latex,arr] (C6).. controls +(225:8mm) and +(135:10mm).. (C7)
node[above,arrn,pos=0.4] {\up};
\draw[-latex,arr] (C7).. controls +(45:8mm) and +(315:10mm).. (C6)
node[below,arrn,pos=0.3] {\down};
\draw[-latex,arr] (C6).. controls +(45:8mm) and +(315:10mm).. (C1)
node[below,arrn,pos=0.3] {\down};
\draw[-latex,arr] (C1).. controls +(right:8mm) and +(260:8mm).. (C4)
node[below,arrn,pos=0.4] {\down};
\draw[-latex,arr] (C4).. controls +(280:8mm) and +(170:8mm).. (C5)
node[below,arrn,pos=0.5] {\up};
\draw[-latex,arr] (C5).. controls +(190:8mm) and +(up:8mm).. (C2)
node[above=1.5pt,arrn,pos=0.65] {\up};
\draw[-latex,arr] (C2).. controls +(10:6mm) and +(260:8mm).. (C5)
node[below,arrn,pos=0.4] {\down};
\draw[-latex,arr] (C5).. controls +(270:6mm) and +(160:8mm).. (C3)
node[below,arrn,pos=0.65] {\up};
\draw[-latex,arr] (C3).. controls +(up:8mm) and +(350:10mm).. (C5)
node[above,arrn,pos=0.5] {\down};
\draw[-latex,arr] (C5).. controls +(up:8mm) and +(350:10mm).. (C4)
node[above,arrn,pos=0.5] {\down};
\draw[densely dashed] (C4) -- (R);
\end{tikzpicture}
\end{center}
\caption{Bijection between a decorated binary tree of size
$2n-1$ (on the left),
and a rooted labeled plane tree of size $n$ (on the right).}
\label{fig3}
\end{figure}

Given a binary tree, we do a depth-first exploration, starting from the
root and exploring left-child before right-child. We construct the
plane tree as we explore the binary tree, starting with an unlabeled
root node. Whenever a left-edge in the binary tree is visited for the
first time, we add one new unlabeled child to the current node in the
plane tree to the right of all existing children of that node, and move
to that new child. If a right-edge is visited for the first time, we
move back to the parent of the current node in the plane tree. Whenever
we encounter a leaf in the binary tree, we copy that label to the node
in the plane tree.

Another way to describe the bijection, initially described between
unlabeled objects, is by means of \emph{Dyck paths} of
length $2(n-1)$. These are syntactically valid strings of $n-1$ nested
bracket pairs. To go from a Dyck path to a binary tree, we parse the
string from left-to-right and at the same time do a depth-first
construction of the binary tree. Start with one active node. Any
opening bracket corresponds to adding a left-child to the currently
active node and then making that child the active node, whereas a
closing bracket corresponds to adding a right-child as sibling of the
left-child that belongs to the opening bracket of the current closing
bracket, and then making that right child the active node. The
labeling is added by inserting $n-1$ of the $n$ leaf labels in front
of the $n-1$ closing brackets, as well as one label at the end of the
string in any of the $n!$ possible orderings. When converting the
labeled Dyck path into a binary tree, every time a label is
encountered that label is copied to the currently active node in the
tree. The Dyck path corresponding to the tree in Figure~\ref{fig3}
would be ``$((()))(()())$'', respectively, with the labeling,
``$(((7)6)1)((5)(2)3)4$''.

To obtain a labeled plane tree from a labeled Dyck path, again do a
depth-first construction, starting with one active node. An opening
bracket corresponds to adding a new child to the currently active node
to the right of all already present siblings and then making that child
the new active node, whereas a closing bracket represents making the
parent of the currently active node the new active node. If a label is
encountered in the string, the label is copied to the currently active node.

%
%
\begin{proposition}\label{prop3} Let $n\geq k\geq1$
and $N^*_{j,k}\sim{\mathcal{P}}^{1}_{j}(0,2k-1)$.
Assume that $X_{n,k}\sim\Bi(N^*_{n-k,k}-(2k-1),1/2 )$. Then
\[
\operatorname{sp}^{k}_{\mathrm{Node}}
\bigl(T^p_n\bigr)~\stackrel{\mathscr{D}} {=}
X_{n,k} + k.
\]
\end{proposition}

\begin{pf} We use the bijection between binary and plane trees. The
number of edges in the spanning tree of $k$ nodes in the plane tree is
equal to the number of left-edges in the spanning tree of the
corresponding $k$ leaves in the binary tree (note that in the spanning
tree of the binary tree, we count left-edges both between internal
nodes as well as between internal nodes and leaves). This is because
only left-edges in the binary tree contribute to the number of edges in
the plane tree. The proof is now a simple consequence of
Proposition~\ref{prop1} and Lemma~\ref{lem1} and the fact that the
number of nodes in any spanning tree is equal to one plus the number of
edges in that spanning tree.
\end{pf}

It is illuminating to see how R\'emy's algorithm acts on plane trees by
means of the bijection described above (see Figure~\ref{fig4}). Apart
from adding new edges to existing nodes, we also observe an operation
that ``cuts'' existing nodes. The trees $T^p_n$ and $T^b_{2n-1}$ are special
cases of Galton--Watson trees (respective offspring distributions
geometric and uniform on $\{0,2\}$) conditioned to have $n$ and $2n-1$
nodes, respectively. As noted by \citet{Janson2006b}, such
conditioned trees cannot in general be grown by only adding edges.
Hence, it is tempting to speculate whether there is a wider class of
offspring distributions for which conditional Galton--Watson trees can
be grown using only local operations on trees such as those in
Figure~\ref{fig4}.

%
%
\begin{figure}[t]
\tikzset{
int/.style = {circle, white, font=\bfseries,
draw=black, fill=black,inner sep=0pt},
ext/.style = {circle, draw, inner sep=0.5pt},
}
\begin{center}
\subfigure[Part of a binary tree]{
\begin{tikzpicture}[scale=0.55]\tiny
\draw[draw=none] (-4,0) -- (4,0);
\draw(-1,-1) -- (-0.5,-0.5);
\draw[densely dashed] (-0.5,-0.5) -- (0,0);
\node at (-1,-1) [int] {\fixbox{a}}
[sibling distance=2cm,level distance=1cm]
child { node (1) at (0.25,0.25) [inner sep=0pt,draw=none] {}}
child { node [int] (2) {\fixbox{b}}
child { node (3) at (0.25,0.25) [inner sep=0pt,draw=none] {}}
child { node [int] (4) {\fixbox{c}}
child { node (5) at (0.25,0.25) [inner sep=0pt,draw=none] {}}
child { node [ext] (6) {\fixbox{z}}
}
}
}
;
\draw(1).. controls +(200:2cm) and +(250:2cm).. (1)
node[above=0.5pt,pos=0.67] {\textit{\footnotesize a}};
\draw(3).. controls +(200:2cm) and +(250:2cm).. (3)
node[above=-0.5pt,pos=0.67] {\textit{\footnotesize b}};
\draw(5).. controls +(200:2cm) and +(250:2cm).. (5)
node[above=0.5pt,pos=0.67] {\textit{\footnotesize c}};
\end{tikzpicture}
}\hfil
\subfigure[Same part of plane tree]{
\begin{tikzpicture}[scale=0.70]\footnotesize
\draw[draw=none] (-4,0) -- (4,0);
\tikzstyle{level 1}=[level distance=1.2cm]
\node at (0,0) [draw=none] {}
child { node [ext] (z) {\fixbox{z}}
[sibling distance=1cm]
child { node (A) [inner sep=0pt,draw=none] {} edge from parent[solid]}
child { node (B) [inner sep=0pt,draw=none] {} edge from parent[solid]}
child { node (C) [inner sep=0pt,draw=none] {} edge from parent[solid]}
edge from parent[dashed]
};
\draw(A).. controls +(245:2cm) and +(295:2cm).. (A)
node[above=1pt,pos=0.5] {\textit{\footnotesize a}};
\draw(B).. controls +(245:2cm) and +(295:2cm).. (B)
node[above=1pt,pos=0.5] {\textit{\footnotesize b}};
\draw(C).. controls +(245:2cm) and +(295:2cm).. (C)
node[above=1pt,pos=0.5] {\textit{\footnotesize c}};
\end{tikzpicture}
}\\
\subfigure[\kern-0.3em Attach at pos. $0$]{
\begin{tikzpicture}[scale=0.50]\footnotesize
\tikzstyle{level 1}=[level distance=1.5cm]
\draw[draw=none] (-2.7,0) -- (2.7,0);
\node at (0,0) [draw=none] {}
child { node [ext] (z) {\fixbox{z}}
[sibling distance=1cm]
child { node at (-0.2,-0.4) [ext,solid] {\fixbox{x}}
edge from parent[solid]}
child { node (A) [inner sep=0pt,draw=none] {} edge from parent[solid] }
child { node (B) [inner sep=0pt,draw=none] {} edge from parent[solid] }
child { node (C) [inner sep=0pt,draw=none] {} edge from parent[solid] }
edge from parent[dashed]
};
\draw(A).. controls +(245:2cm) and +(295:2cm).. (A)
node[above=1pt,pos=0.5] {\textit{\footnotesize a}};
\draw(B).. controls +(245:2cm) and +(295:2cm).. (B)
node[above=1pt,pos=0.5] {\textit{\footnotesize b}};
\draw(C).. controls +(245:2cm) and +(295:2cm).. (C)
node[above=1pt,pos=0.5] {\textit{\footnotesize c}};
\end{tikzpicture}
}
\subfigure[\kern-0.3em Attach at pos. $1$]{
\begin{tikzpicture}[scale=0.5]\footnotesize
\tikzstyle{level 1}=[level distance=1.5cm]
\draw[draw=none] (-2.7,0) -- (2.7,0);
\node at (0,0) [draw=none] {}
child { node [ext] (z) {\fixbox{z}}
[sibling distance=1cm]
child { node (A) [inner sep=0pt,draw=none] {} edge from parent[solid] }
child { node at (0,-0.4) [ext,solid] {\fixbox{x}} edge from parent[solid]
}
child { node (B) [inner sep=0pt,draw=none] {} edge from parent[solid] }
child { node (C) [inner sep=0pt,draw=none] {} edge from parent[solid]}
edge from parent[dashed]
};
\draw(A).. controls +(245:2cm) and +(295:2cm).. (A)
node[above=1pt,pos=0.5] {\textit{\footnotesize a}};
\draw(B).. controls +(245:2cm) and +(295:2cm).. (B)
node[above=1pt,pos=0.5] {\textit{\footnotesize b}};
\draw(C).. controls +(245:2cm) and +(295:2cm).. (C)
node[above=1pt,pos=0.5] {\textit{\footnotesize c}};
\end{tikzpicture}
}
\subfigure[\kern-0.3em Attach at pos. $2$]{
\begin{tikzpicture}[scale=0.5]\footnotesize
\tikzstyle{level 1}=[level distance=1.5cm]
\draw[draw=none] (-2.7,0) -- (2.7,0);
\node at (0,0) [draw=none] {}
child { node [ext] (z) {\fixbox{z}}
[sibling distance=1cm]
child { node (A) [inner sep=0pt,draw=none] {} edge from parent[solid]}
child { node (B) [inner sep=0pt,draw=none] {} edge from parent[solid]}
child { node at (0,-0.4) [ext,solid] {\fixbox{x}} edge from
parent[solid]}
child { node (C) [inner sep=0pt,draw=none] {} edge from parent[solid]}
edge from parent[dashed]
};
\draw(A).. controls +(245:2cm) and +(295:2cm).. (A)
node[above=1pt,pos=0.5] {\textit{\footnotesize a}};
\draw(B).. controls +(245:2cm) and +(295:2cm).. (B)
node[above=1pt,pos=0.5] {\textit{\footnotesize b}};
\draw(C).. controls +(245:2cm) and +(295:2cm).. (C)
node[above=1pt,pos=0.5] {\textit{\footnotesize c}};
\end{tikzpicture}
}
\subfigure[\kern-0.3em Attach at pos. $3$]{
\begin{tikzpicture}[scale=0.5]\footnotesize
\tikzstyle{level 1}=[level distance=1.5cm]
\draw[draw=none] (-2.7,0) -- (2.7,0);
\node at (0,0) [draw=none] {}
child { node [ext] (z) {\fixbox{z}}
[sibling distance=1cm]
child { node (A) [inner sep=0pt,draw=none] {} edge from parent[solid]}
child { node (B) [inner sep=0pt,draw=none] {} edge from parent[solid]}
child { node (C) [inner sep=0pt,draw=none] {} edge from parent[solid]}
child { node at (0.2,-0.4) [ext,solid] {\fixbox{x}} edge from
parent[solid]}
edge from parent[dashed]
};
\draw(A).. controls +(245:2cm) and +(295:2cm).. (A)
node[above=1pt,pos=0.5] {\textit{\footnotesize a}};
\draw(B).. controls +(245:2cm) and +(295:2cm).. (B)
node[above=1pt,pos=0.5] {\textit{\footnotesize b}};
\draw(C).. controls +(245:2cm) and +(295:2cm).. (C)
node[above=1pt,pos=0.5] {\textit{\footnotesize c}};
\end{tikzpicture}
}
\\
\subfigure[Cut at pos. $0$]{
\begin{tikzpicture}[scale=0.5]\footnotesize
\tikzstyle{level 1}=[level distance=1.5cm]
\draw[draw=none] (-2.7,0) -- (2.7,0);
\node at (0,0) [draw=none] {}
child { node [ext] {\fixbox{x}}
child {node at (0,-0.3) [ext,solid] {\fixbox{z}}
[sibling distance=1cm]
child { node (A) [inner sep=0pt,draw=none] {} edge from parent[solid] }
child { node (B) [inner sep=0pt,draw=none] {} edge from parent[solid] }
child { node (C) [inner sep=0pt,draw=none] {} edge from parent[solid] }
edge from parent[solid]
} edge from parent[dashed]};
\draw(A).. controls +(245:2cm) and +(295:2cm).. (A)
node[above=1pt,pos=0.5] {\textit{\footnotesize a}};
\draw(B).. controls +(245:2cm) and +(295:2cm).. (B)
node[above=1pt,pos=0.5] {\textit{\footnotesize b}};
\draw(C).. controls +(245:2cm) and +(295:2cm).. (C)
node[above=1pt,pos=0.5] {\textit{\footnotesize c}};
\end{tikzpicture}
}
\subfigure[Cut at pos. $1$]{
\begin{tikzpicture}[scale=0.5]\footnotesize
\tikzstyle{level 1}=[level distance=1.5cm]
\draw[draw=none] (-2.7,0) -- (2.7,0);
\node at (0,0) [draw=none] {}
child { node [ext] {\fixbox{x}} [sibling distance=1.5cm]
child { node (A) [inner sep=0pt,draw=none] {} edge from parent[solid] }
child {node at (0.2,-0.4)[ext,solid] {\fixbox{z}}
[sibling distance=1cm]
child { node (B) [inner sep=0pt,draw=none] {} edge from parent[solid] }
child { node (C) [inner sep=0pt,draw=none] {} edge from parent[solid] }
edge from parent[solid]
} edge from parent[dashed]};
\draw(A).. controls +(245:2cm) and +(295:2cm).. (A)
node[above=1pt,pos=0.5] {\textit{\footnotesize a}};
\draw(B).. controls +(245:2cm) and +(295:2cm).. (B)
node[above=1pt,pos=0.5] {\textit{\footnotesize b}};
\draw(C).. controls +(245:2cm) and +(295:2cm).. (C)
node[above=1pt,pos=0.5] {\textit{\footnotesize c}};
\end{tikzpicture}
}
\subfigure[Cut at pos. $2$]{
\begin{tikzpicture}[scale=0.5]\footnotesize
\tikzstyle{level 1}=[level distance=1.5cm]
\draw[draw=none] (-2.7,0) -- (2.7,0);
\node at (0,0) [draw=none] {}
child { node [ext] {\fixbox{x}} [sibling distance=1cm]
child { node (A) [inner sep=0pt,draw=none] {} edge from parent[solid] }
child { node (B) [inner sep=0pt,draw=none] {} edge from parent[solid] }
child {node at (0.3,-0.4) [ext,solid] {\fixbox{z}}
[sibling distance=1cm]
child { node (C) [inner sep=0pt,draw=none] {} edge from parent[solid] }
edge from parent[solid]
} edge from parent[dashed]};
\draw(A).. controls +(245:2cm) and +(295:2cm).. (A)
node[above=1pt,pos=0.5] {\textit{\footnotesize a}};
\draw(B).. controls +(245:2cm) and +(295:2cm).. (B)
node[above=1pt,pos=0.5] {\textit{\footnotesize b}};
\draw(C).. controls +(245:2cm) and +(295:2cm).. (C)
node[above=1pt,pos=0.5] {\textit{\footnotesize c}};
\end{tikzpicture}
}
\subfigure[Cut at pos. $3$]{
\begin{tikzpicture}[scale=0.5]\footnotesize
\tikzstyle{level 1}=[level distance=1.5cm]
\draw[draw=none] (-2.7,0) -- (2.7,0);
\node at (0,0) [draw=none] {}
child { node [ext] {\fixbox{x}} [sibling distance=1cm]
child { node (A) [inner sep=0pt,draw=none] {} edge from parent[solid] }
child { node (B) [inner sep=0pt,draw=none] {} edge from parent[solid] }
child { node (C) [inner sep=0pt,draw=none] {} edge from parent[solid] }
child {node at (0.3,-0.4)[ext,solid] {\fixbox{z}}
[sibling distance=1cm]
edge from parent[solid]
} edge from parent[dashed]};
\draw(A).. controls +(245:2cm) and +(295:2cm).. (A)
node[above=1pt,pos=0.5] {\textit{\footnotesize a}};
\draw(B).. controls +(245:2cm) and +(295:2cm).. (B)
node[above=1pt,pos=0.5] {\textit{\footnotesize b}};
\draw(C).. controls +(245:2cm) and +(295:2cm).. (C)
node[above=1pt,pos=0.5] {\textit{\footnotesize c}};
\end{tikzpicture}
}
\end{center}
\caption{R\'emy's algorithm acting on plane trees by means of the
bijection given in Figure~\protect\ref{fig3}. We leave it to the
reader to find the operations in the binary tree as given in \textup{(a)} that
correspond to the operations~\textup{(c)--(j)}.}
\label{fig4}
\end{figure}
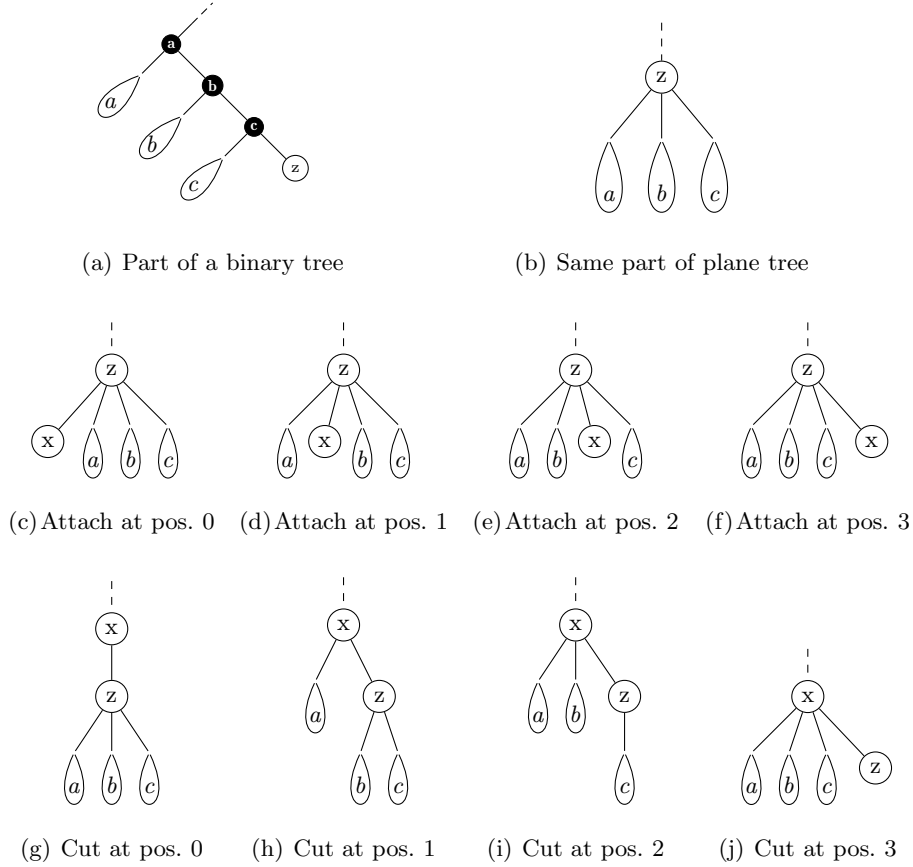

Before proceeding with the proof of Theorem~\ref{thm2}, we need an
auxiliary lemma used
to transfer rates from our urn model to the distributions in
Propositions~\ref{prop2} and~\ref{prop3}.
Here and below $\llVert\cdot\rrVert $ denotes the essential
supremum norm.

%
%
\begin{lemma}\label{lem2} Let $\alpha\geq1$ and $\beta\geq1$.
There is a constant $C=C_{\alpha,\beta}$, such that
for any positive random variable $X$ and any real-valued random
variable $\xi$,
%
%
\begin{eqnarray}\label{15}
&& \dk\bigl(\law(X+\xi),\GG(\alpha,\beta) \bigr)
\nonumber\\[-8pt]\\[-8pt]\nonumber
&&\qquad  \leq C \bigl
(\dk
\bigl(\law(X),\GG(\alpha,\beta) \bigr) + \bigl\llVert\IE\bigl(\xi^2
\mid X\bigr)\bigr\rrVert^{1/2} \bigr).
\end{eqnarray}
If $X$ and $\xi$ satisfy
%
%
\begin{equation}
\label{16} \IP\bigl[\llvert\xi\rrvert\geq t \bmid X \bigr] \leq
c_1 e^{-c_2 t^2 / X}
\end{equation}
for some constants $c_1>0$ and $c_2 > 1$, then
%
%
\begin{eqnarray}\label{17}
&& \dk\bigl(\law(X+\xi),\GG(\alpha,\beta) \bigr)
\nonumber\\[-8pt]\\[-8pt]\nonumber
&&\qquad \leq C \biggl
(\dk
\bigl(\law(X),\GG(\alpha,\beta) \bigr) + \frac{1+c_1+\log
c_2}{\sqrt{c_2}} \biggr).
\end{eqnarray}
\end{lemma}

\begin{pf} The proofs of~\eq{15} and~\eq{17} follow along the lines
of the proof of Lemma~1 of \citet{Bolthausen1982a}.
Once one observes that $\GG(\alpha,\beta)$ has bounded density, the
modifications needed to prove~\eq{15} are straightforward, and hence
omitted. The modifications to prove~\eq{17}, however, are more
substantial, hence we give a complete proof for this case. Let $Z\sim
\GG(\alpha,\beta)$, and let
\begin{eqnarray*}
F(t) &=& \IP[X\leq t], \qquad F^*(t) = \IP[X+\xi\leq t],
\\
G(t) &=& \IP
[Z\leq
t], \qquad\delta= \sup_{t>0} \bigl\llvert F(t)-G(t)\bigr
\rrvert.
\end{eqnarray*}
If $t>\eps>0$, then
\begin{eqnarray*}
F^*(t) &=& \IE\bigl\{\IP[\xi\leq t - X\mid X] \bigr\} \geq\IE\bigl\{\I
[X\leq
t-\eps]\IP[\xi\leq t - X\mid X] \bigr\}
\\
&=& F(t-\eps) - \IE\bigl\{\I[X\leq t-\eps]\IP[\xi>t-X\mid X] \bigr\}.
\end{eqnarray*}
Let $t_0 = \log c_2$ and $\eps= \frac{\log c_2}{\sqrt{c_2}}$,
and observe that, since $c_2>1$, we have $t_0>\eps>0$.
Also note that one can find a constant $c_3$ such that $1-G(t) \leq c_3
e^{-t/2}$.
Using~\eq{16} and setting $M_{\alpha,\beta}$ the maximum
of the density of $\GG(\alpha,\beta)$ (defined explicitly in
Lemma~\ref{lem12} below),
\begin{eqnarray*}
&& \IE\bigl\{\I[X\leq t-\eps]\IP[\xi>t-X\mid X] \bigr\}
\\
&& \qquad\leq\IE\bigl\{\I[X\leq t\wedge t_0-\eps]\IP[\xi>t\wedge
t_0-X|X] \bigr\} + \IP[X>t_0-\eps]
\\
&&\qquad\leq c_1\IE\bigl\{\I[X\leq t\wedge t_0-
\eps]e^{-c_2(t\wedge
t_0-X)^2/X} \bigr\} + \IP[Z>t_0-\eps]+\delta
\\
&&\qquad\leq c_1 e^{-c_2\eps^2/t_0} + \IP[Z>t_0]+\delta+
\eps M_{\alpha,\beta}
\\
&&\qquad\leq c_1 e^{-\log c_2} + \delta+ \frac{c_3+M_{\alpha,\beta
}\log c_2}{\sqrt{c_2}} \leq
\delta+ \frac{c_1+c_3+M_{\alpha,\beta}\log c_2}{\sqrt{c_2}}.
\end{eqnarray*}
Therefore,
\begin{eqnarray*}
F^*(t) - G(t) & \geq & F(t-\eps) - G(t-\eps) - \eps M_{\alpha,\beta} -
\delta-
\frac{c_1+c_3+M_{\alpha,\beta}\log c_2}{\sqrt{c_2}}
\\
& \geq& - 2\delta- \frac{c_1+c_3+2M_{\alpha,\beta}\log c_2}{\sqrt{c_2}}.
\end{eqnarray*}
On the other hand,
\[
F^*(t) \leq F(t+\eps) + \IE\bigl\{\I[ t+\eps< X \leq t_0]\IP[\xi
\leq t-X|X] \bigr\} + \IP[X>t_0].
\]
Since
\begin{eqnarray*}
&& \IE\bigl\{\I[t+\eps< X \leq t_0]\IP[\xi\leq t-X|X] \bigr\}
\\
&&\qquad\leq c_1\IE\bigl\{\I[t+\eps< X \leq t_0]e^{-c_2(t-X)^2/X}
\bigr\} \leq c_1 e^{-c_2\eps^2/t_0} \leq\frac{c_1}{ c_2 }
\end{eqnarray*}
and $\IP[X > t_0] \leq\delta+ c_3 / \sqrt{c_2}$, by a similar
reasoning as above,
\begin{eqnarray*}
F^*(t) - G(t) &\leq& F(t+\eps) + G(t-\eps) + \eps M_{\alpha,\beta} +
\delta+
\frac{c_1+c_3}{\sqrt{ c_2}}
\\
&\leq& 2\delta+ \frac{c_1 + c_3 + M_{\alpha
,\beta}\log c_2}{\sqrt{c_2}}.
\end{eqnarray*}
Hence,
%
%
\begin{equation}
\label{18} \bigl\llvert F^*(t)-G(t)\bigr\rrvert\leq2\delta+
\frac{c_1 + c_3 +
2M_{\alpha,\beta
}\log c_2}{\sqrt{c_2}}.
\end{equation}
From this, one easily obtains~\eq{17}.
\end{pf}

\begin{pf*}{Proof of Theorem~\ref{thm2}}
\textit{Case}~(i). This follows directly from
Proposition~\ref{prop1} and~\eq{1} of Theorem~\ref{thm1}.

\textit{Case} (ii).
Let $W_n=\operatorname{sp}^{k}_{\mathrm
{Node}}(T^b_{2n-1})/\nu_n$ with $\nu_n = \mu
_{n-k-1}(1,2k)$, let $X_{n,k}$ be as in Proposition~\ref{prop2}.
Applying the triangle inequality, we obtain
%
%
\begin{eqnarray}\label{19}
&&\dk\bigl(\law(W_n),\GG(2k,2) \bigr)
\nonumber\\[-8pt]\\[-8pt]\nonumber
&&\qquad\leq\dk\bigl(\law(W_n),\law(X_{n,k}/
\nu_n ) \bigr) + \dk\bigl(\law(X_{n,k}/\nu_n
),\GG(2k,2) \bigr).
\end{eqnarray}
Since the total variation distance is an upper bound on the Kolmogorov
distance, \eq{11} yields that the first term in \eq{19} is of
order $\bigo(n^{-1})$. To bound the second term in~\eq{19},
let $N^*_{n-k,k}$ and $Y_1,\dots,Y_k$ be as in Proposition~\ref
{prop2}; set $X:= N^*_{n-k,k} / \nu_n$ and $\xi:= (Y_1+\cdots+Y_k)
/ \nu_n$. From~\eq{10} and recalling that $ (\sum_{i=1}^k
Y_k )^2\leq k\sum_{i=1}^k Y_i^2$, it is easy to see that
$\IE(\xi^2|X) \leq6k/\nu_n$ almost surely. Applying~\eq{15} from
Lemma~\ref{lem2}, we hence obtain that
\[
\dk\bigl(\law(X_{n,k}/\nu_n),\GG(2k,2)\bigr)\leq C \bigl(
\dk\bigl(\law\bigl(N^*_{n-k,k}/\nu_n\bigr), \GG(2k,2)\bigr)
+ \nu_n^{-1/2} \bigr).
\]
Combining this with Theorem~\ref{thm1} and \eq{19}, the claim follows.

\textit{Case} (iii). Let $N^*_{n-k,k}$ and $X_{n,k}$ be as
in Proposition~\ref{prop3} and let again $\nu_n=\mu_{n-k-1}$. We may\vspace*{1pt}
consider $2X_{n,k}/\mu_n$ in place of $2\operatorname{sp}^{k}_{\mathrm{Node}}(T^p_n)/\nu
_n$, since by \eq{2} of Theorem~\ref{thm1}, the constant
shift $2k/\nu_n$ is of order $n^{-1/2}$, which, by Lemma~\ref{lem2},
translates into an error of order at most $n^{-1/2}$. Let $X:=
N^*_{n-k,k} / \nu_n $ and $\xi:= (2X_n - N^*_{n-k,k})/\nu_n$ and
note that $2X_n / \nu_n = X+\xi$. From Chernoff's inequality, it
follows that~\eq{16} holds with $c_1 = 2$ and $c_2 = \nu_n^2/4$.
For $n$ large enough, $c_2>1$ [again using~\eq{2}] and applying~\eq
{17} from Lemma~\ref{lem2} and~\eq{1} from Theorem~\ref{thm1}, the
claim follows.
\end{pf*}

\section{Random walk: Proof of Theorem~\texorpdfstring{\protect\ref{thm3}}{1.11}} \label{sec2a}

That random walks and random trees are intimately connected has been
observed in many places; see, for example, \citet{Aldous1991} and
\citet{Pitman2006}.
The specific bijections between binary trees and random walk,
excursion, bridge and meander which we will make use of were sketched by
\citet{Marchal2003} and see also the references therein.
It is clear that for each such bijection R\'emy's algorithm can be translated
to recursively create random walk, excursion, bridge and meander of
arbitrary lengths.

%
%
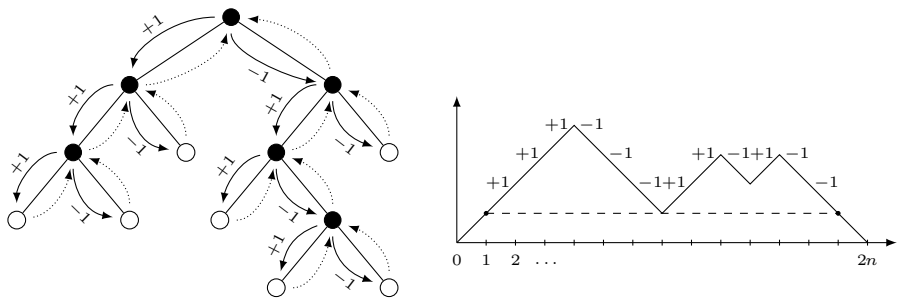
\begin{figure}[b]
\def\up{{\tiny$+1$}}
\def\down{{\tiny$-1$}}
\hfill
\begin{tikzpicture}[scale=0.6]\tiny
\tikzstyle{level 1}=[sibling distance=4.5cm]
\tikzstyle{level 2}=[sibling distance=2.5cm]
\node at (0,0) [int] (I1) {\phantom1}
child { node [int] (I5) {\phantom5}
child { node [int] (I6) {\phantom6}
child { node [ext] (E7) { \phantom7 } }
child { node [ext] (E6) { \phantom6 } }
}
child { node [ext] (E1) {\phantom1} }
}
child { node [int] (I3) {\phantom3}
child { node [int] (I4) {\phantom4}
child { node [ext] (E5) {\phantom5} }
child { node [int] (I2) {\phantom2}
child { node [ext] (E2) {\phantom2} }
child { node [ext] (E3) {\phantom3} }
}
}
child { node [ext] (E4) {\phantom4} }
}
;
\draw[-latex,arr] (I1).. controls +(left:13mm) and +(80:8mm).. (I5)
node[above,arrn] {\up};
\draw[-latex,arr] (I5).. controls +(left:8mm) and +(up:8mm).. (I6)
node[above,arrn] {\up};
\draw[-latex,arr] (I6).. controls +(left:8mm) and +(up:8mm).. (E7)
node[above,arrn] {\up};
\draw[-latex,arrd] (E7).. controls +(10:8mm) and +(260:8mm).. (I6);
\draw[-latex,arr] (I6).. controls +(down:8mm) and +(170:10mm).. (E6)
node[below,arrn,pos=0.6] {\down};
\draw[-latex,arrd] (E6).. controls +(up:8mm) and +(340:8mm).. (I6);
\draw[-latex,arrd] (I6).. controls +(10:8mm) and +(260:8mm).. (I5);
\draw[-latex,arr] (I5).. controls +(down:8mm) and +(170:10mm).. (E1)
node[below,arrn,pos=0.6] {\down};
\draw[-latex,arrd] (E1).. controls +(up:8mm) and +(340:8mm).. (I5);
\draw[-latex,arrd] (I5).. controls +(right:8mm) and +(255:10mm).. (I1);
\draw[-latex,arr] (I1).. controls +(down:8mm) and +(170:10mm).. (I3)
node[below,arrn] {\down};
\draw[-latex,arr] (I3).. controls +(left:8mm) and +(up:8mm).. (I4)
node[above,arrn,pos=.6] {\up};
\draw[-latex,arr] (I4).. controls +(190:8mm) and +(80:8mm).. (E5)
node[above,arrn] {\up};
\draw[-latex,arrd] (E5).. controls +(right:8mm) and +(260:8mm).. (I4);
\draw[-latex,arr] (I4).. controls +(down:8mm) and +(160:10mm).. (I2)
node[below,arrn,pos=0.65] {\down};
\draw[-latex,arr] (I2).. controls +(200:10mm) and +(75:8mm).. (E2)
node[above,arrn,pos=0.6] {\up};
\draw[-latex,arrd] (E2).. controls +(right:8mm) and +(260:8mm).. (I2);
\draw[-latex,arr] (I2).. controls +(down:8mm) and +(160:10mm).. (E3)
node[below,arrn,pos=0.65] {\down};
\draw[-latex,arrd] (E3).. controls +(up:8mm) and +(340:8mm).. (I2);
\draw[-latex,arrd] (I2).. controls +(up:8mm) and +(340:8mm).. (I4);
\draw[-latex,arrd] (I4).. controls +(10:8mm) and +(260:8mm).. (I3);
\draw[-latex,arr] (I3).. controls +(down:8mm) and +(160:10mm).. (E4)
node[below,arrn,pos=0.65] {\down};
\draw[-latex,arrd] (E4).. controls +(up:8mm) and +(340:8mm).. (I3);
\draw[-latex,arrd] (I3).. controls +(100:8mm) and +(355:13mm).. (I1);
\begin{scope}[xshift=5cm,yshift=-5cm,scale=0.65]
\draw[-latex] (0,0) -- (0,5);
\draw[-latex] (0,0) -- (15,0);
\foreach\x in {1,2,...,14}
\draw(\x,-0.1) -- (\x,0.1);
\draw[dashed] (1,1) -- (13,1);
\draw(1,1) --
(2,2) { node [above=2pt, pos=0.4] {$+1$} } --
(3,3) { node [above=2pt, pos=0.4] {$+1$} } --
(4,4) { node [above=2pt, pos=0.4] {$+1$} } --
(5,3) { node [above=2pt, pos=0.6] {$-1$} } --
(6,2) { node [above=2pt, pos=0.6] {$-1$} } --
(7,1) { node [above=2pt, pos=0.6] {$-1$} } --
(8,2) { node [above=2pt, pos=0.4] {$+1$} } --
(9,3) { node [above=2pt, pos=0.4] {$+1$} } --
(10,2) { node [above=2pt, pos=0.6] {$-1$} } --
(11,3) { node [above=2pt, pos=0.4] {$+1$} } --
(12,2) { node [above=2pt, pos=0.6] {$-1$} } --
(13,1) { node [above=2pt, pos=0.6] {$-1$} };
\draw(0,0)--(1,1);
\drawdot{1,1};
\draw(13,1)--(14,0);
\drawdot{13,1};
\node at (0,0) [below=2pt] {$0$};
\node at (1,0) [below=2pt] {$1$};
\node at (2,0) [below=2pt] {$2$};
\node at (3,0) [below=2pt] {$\mathstrut\cdots$};
\node at (14,0) [below=2pt] {$2\kern-0.2ex n$};
\end{scope}
\end{tikzpicture}
\hfill\hfill
\caption{Illustration of the bijection between a binary tree
with $n$ leaves (on the left), and random walk excursions of
length $2n$ (on the right).}
\label{fig5}
\end{figure}

\subsection*{Random walk excursion} The simplest bijection is that
between a binary tree of size $2n-1$ and a
(positive) random walk excursion of length $2n$, as illustrated in
Figure~\ref{fig5}. Note first that the first and last step of the excursion
must be $+1$ and $-1$, respectively, that is, $S^e_{2n}(1) =
S^e_{2n}(2n-1) =
1$. To map the tree to the path from $1$ to $2n-1$, we do a
left-to-right depth-first
exploration of the tree (i.e., counterclockwise): starting from the
root, each time an edge is visited
the first time (out of a total of two times that each edge is visited), the
excursion will go up by one if the edge is a left-edge and go down by
one if
the edge is a right-edge. By means of the Dyck path representation of
the binary tree, we conclude that in this exploration process, the
number of explored left edges (``opening brackets'') is always larger
than the number of explored right edges (``closing brackets''), hence the
random walk stays positive. Furthermore, since the number of left- and
right-edges is equal, the final height is the same as the starting
height. It is not hard to see that the height of a time point in the
excursion corresponds to one plus the number of left-edges from the
corresponding point in the binary tree up to the root.

Furthermore, the pairing between leaves and internal nodes in the
(planted) binary tree induces a pairing between the time points in the
random walk excursion (the pairing in Figure~\ref{fig2}, by means of
the bijection in Figure~\ref{fig5}, results in the pairing in
Figure~\ref{fig6}). Note that all time points can be paired except for
the final time point $2n$ for which, however, we know the height.

%
%
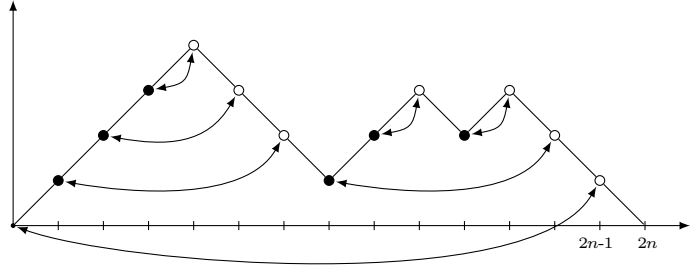
\begin{figure}
\def\up{{\tiny$+1$}}
\def\down{{\tiny$-1$}}
\hfill
\begin{tikzpicture}[xscale=0.6,yscale=0.6]\tiny
\draw[-latex] (0,0) -- (0,5);
\draw[-latex] (0,0) -- (15,0);
\foreach\x in {1,2,...,14}
\draw(\x,-0.1) -- (\x,0.1);
\node at (0,0) [int,inner sep=0.5pt] (RW0) {};
\node at (1,1) [int,inner sep=1.3pt] (RW1) {};
\node at (2,2) [int,inner sep=1.3pt] (RW2) {};
\node at (3,3) [int,inner sep=1.3pt] (RW3) {};
\node at (4,4) [ext,inner sep=1.3pt] (RW4) {};
\node at (5,3) [ext,inner sep=1.3pt] (RW5) {};
\node at (6,2) [ext,inner sep=1.3pt] (RW6) {};
\node at (7,1) [int,inner sep=1.3pt] (RW7) {};
\node at (8,2) [int,inner sep=1.3pt] (RW8) {};
\node at (9,3) [ext,inner sep=1.3pt] (RW9) {};
\node at (10,2) [int,inner sep=1.3pt] (RW10) {};
\node at (11,3) [ext,inner sep=1.3pt] (RW11) {};
\node at (12,2) [ext,inner sep=1.3pt] (RW12) {};
\node at (13,1) [ext,inner sep=1.3pt] (RW13) {};
\node at (14,0) [inner sep=0pt,outer sep=0pt](RW14) {};
\draw(RW0) -- (RW1) -- (RW2) -- (RW3) -- (RW4) -- (RW5) -- (RW6) -- (RW7)
-- (RW8) -- (RW9) -- (RW10) -- (RW11) -- (RW12) -- (RW13) -- (RW14);

\draw[latex-latex, shorten <=0.2mm, shorten >=0.2mm] (RW3)..controls
+(10:8mm) and +(260:8mm)..(RW4);
\draw[latex-latex, shorten <=0.2mm, shorten >=0.2mm] (RW2)..controls
+(-10:14mm) and +(240:14mm)..(RW5);
\draw[latex-latex, shorten <=0.2mm, shorten >=0.2mm] (RW1)..controls
+(-10:17mm) and +(240:17mm)..(RW6);
\draw[latex-latex, shorten <=0.2mm, shorten >=0.2mm] (RW8)..controls
+(10:8mm) and +(260:8mm)..(RW9);
\draw[latex-latex, shorten <=0.2mm, shorten >=0.2mm] (RW10)..controls
+(10:8mm) and +(260:8mm)..(RW11);
\draw[latex-latex, shorten <=0.2mm, shorten >=0.2mm] (RW7)..controls
+(-10:17mm) and +(240:17mm)..(RW12);
\draw[latex-latex, shorten <=0.2mm, shorten >=0.2mm] (RW0)..controls
+(-20:25mm) and +(240:30mm)..(RW13);
\node at (13,0) [below=2pt] {\kern-3pt$2\kern-0.2ex n\kern
-0.1ex\mbox{-}\kern-0.2ex1$};
\node at (14,0) [below=2pt] {\kern2pt$2\kern-0.2ex n$};
\end{tikzpicture}
\hfill\hfill
\caption{Pairing up the points in the random walk excursion. Note that
we pair up time point $2n-1$ with time point $0$, whereas time
point $2n$ is left without a partner.}
\label{fig6}
\end{figure}
%

%
%
\begin{proposition}[(Height of an excursion at a random time)]\label{prop4}
If $n\geq1$, $N^*_{n-1} \sim{\mathcal{P}}^{1}_{n-1}(0,1)$
and $K'\sim\U\{
0,1,\dots,2n-1\}$ independent of $N^*_{n-1}$ and the
excursion $(S^e_{2n}(u))_{u=0}^{2n}$, then
\[
S^e_{2n}\bigl(K'\bigr)\sim\Bi
\bigl(N^*_{n-1},1/2\bigr).
\]
\end{proposition}
\begin{pf} Mapping the pairing of leaves and internal nodes from the
planted binary tree to the excursion, we have that the heights in each
pair differ by exactly one because, by definition of the pairing, each
leaf has one more left edge in its path up to the root as compared to
the internal node it is paired with.

Let $J\sim\Be(1/2)$ independent of all else. Instead of choosing a
random time point $K'$, we may as well choose with probability $1/2$
the time point corresponding to Leaf~1 ($J=0$), and choose with
probability $1/2$ the time point paired with the time point given by
Leaf~1 ($J=1$). Recall that the height of a time point corresponding to
a leaf is just one plus the number of left-edges $M_{1,n}$ in the path
to the root in the corresponding binary tree. From Lemma~\ref{lem1}
with $k=1$, we have $M_{1,n}\sim\Bi(N^*_{n-1}-1,1/2)$. Let $X_n$ be
the height of the excursion at the time point corresponding to the node
chosen in the binary tree; we have $X_n = 1 + M_{1,n} - J$. Since $J$
is independent of the tree and since $1-J\sim\Be(1/2)$, we have $X_n
\sim\Bi(N^*_{n-1},1/2)$, which proves the claim.
\end{pf}

\subsection*{Random walk bridge} We now discuss the bijection between
decorated binary trees and random walk bridges; see
Figure~\ref{fig7} for an example. We first mark the path from Leaf~1
to the root. We call all the internal nodes along this path, including the
root, the \emph{spine} (the trivial tree of size one has no internal
node and,
therefore, an empty spine). As before, a left edge represents ``$+1$''
and a right-edge represents ``$-1$''. The exploration starts at the root.
Whenever a spine node is visited, explore first the child (and its
subtree) that is not part of the spine, and then the child that is next
in the spine. Also, if the right child of a spine node is being
explored and if that child is not itself a spine node do the
exploration \emph{clockwise}, until the exploration process is back to
the spine. This makes each sub-tree to the left of the spine a positive
excursion and each sub-tree to the right a negative excursion; cf.
\citet{Pitman2006}, Exercise 7.4.14.

%
%
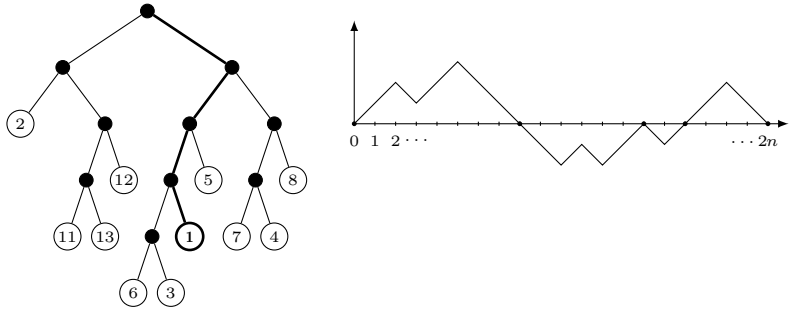
\begin{figure}
\hfill
\begin{tikzpicture}[scale=0.5]\tiny
\tikzstyle{level 1}=[sibling distance=4.5cm]
\tikzstyle{level 2}=[sibling distance=2.25cm]
\tikzstyle{level 3}=[sibling distance=1.0cm]
\node at (0,0) [int] (1) {\phantom{$\cdot$}}
child { node [int] {\phantom{$\cdot$}}
child { node [ext] {\fixbox{2}}}
child { node [int] {\phantom{$\cdot$}}
child { node [int] {\phantom{$\cdot$}}
child { node [ext] {\fixbox{11}}}
child { node [ext] {\fixbox{13}}}
}
child { node [ext] {\fixbox{12}}}
}
}
child { node [int] (2) {\phantom{$\cdot$}}
child { node [int] (3) {\phantom{$\cdot$}}
child { node [int] (4) {\phantom{$\cdot$}}
child { node [int] {\phantom{$\cdot$}}
child { node [ext] {\fixbox{6}}}
child { node [ext] {\fixbox{3}}}
}
child { node [ext,line width=1pt] (5) {\fixbox{\textbf{1}}}}
}
child { node [ext] {\fixbox{5}}}
}
child { node [int] {\phantom{$\cdot$}}
child { node [int] {\phantom{$\cdot$}}
child { node [ext] {\fixbox{7}}}
child { node [ext] {\fixbox{4}}}
}
child { node [ext] {\fixbox{8}}}
}
}
;
\draw[line width=1pt] (1) -- (2) -- (3) -- (4) -- (5);
\begin{scope}[xshift=5.5cm,yshift=-3cm,scale=0.55]
\draw[-latex] (0,0) -- (0,5);
\draw[-latex] (0,0) -- (21,0);
\foreach\x in {1,2,...,20}
\draw(\x,-0.1) -- (\x,0.1);
\draw
(0,0)--
(1,1)--
(2,2)--
(3,1)--
(4,2)--
(5,3)--
(6,2)--
(7,1)--
(8,0)--
(9,-1)--
(10,-2)--
(11,-1)--
(12,-2)--
(13,-1)--
(14,0)--
(15,-1)--
(16,0)--
(17,1)--
(18,2)--
(19,1)--
(20,0);
\drawdot{0,0}; \drawdot{8,0}; \drawdot{14,0}; \drawdot{16,0};
\drawdot{20,0};
\node at (0,0) [below=2pt] {$0$};
\node at (1,0) [below=2pt] {$1$};
\node at (2,0) [below=2pt] {$2$};
\node at (3,0) [below=2pt] {$\cdots$};
\node at (19,0) [below=2pt] {$\cdots$\phantom{1}};
\node at (20,0) [below=2pt] {$2\kern-0.2ex n$};
\end{scope}
\end{tikzpicture}
\hfill\hfill
\caption{Illustration of the bijection between a decorated binary
tree of size $2n+1$ with a spine and random walk bridge of length $2n$.
Note that within sub-trees that grow to the left of the spine, the
depth-first exploration is done counterclockwise, whereas within
sub-trees that grow to the right it is done clockwise.}
\label{fig7}
\end{figure}
%

%
%
\begin{proposition}[(Occupation time of bridge)]\label{prop5}
If $n\geq0$, then
\[
L^b_{2n} \sim{\mathcal{P}}^{1}_n(0,1).
\]
\end{proposition}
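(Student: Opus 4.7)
The plan is to match both $\IP(L_{2n}^b = j)$ and $\IP(W_n = j)$ to the same explicit formula in $n$ and $j$, where $W_n\sim F^{n,1}_{0,1}$. First, on the bridge side, I decompose a uniform bridge of length $2n$ into its excursions away from zero. A bridge with $L_{2n}^b = j$ has exactly $j-1$ excursions, each of some even length $2\ell_i$ with $\ell_1+\cdots+\ell_{j-1} = n$ and $\ell_i\geq 1$; each excursion independently carries a sign ($\pm$) and a shape (of which there are $C_{\ell_i-1}$ positive ones). Therefore,
\[
\#\{\text{bridges with } L_{2n}^b=j\} = 2^{j-1}\,[x^n](xC(x))^{j-1} = 2^{j-1}\cdot \frac{j-1}{2n-j+1}\binom{2n-j+1}{n},
\]
where the final equality is the standard Catalan convolution identity (Lagrange inversion applied to $C(x)=(1-\sqrt{1-4x})/(2x)$). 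Dividing by the total bridge count $\binom{2n}{n}$ yields an explicit formula for $\IP(L_{2n}^b=j)$.

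Next, on the urn side, the Markov property of $W_n$ and the definition of $F^{n,1}_{0,1}$ give the forward recursion
\[
\IP(W_n=j) = \frac{j-1}{2n-1}\,\IP(W_{n-1}=j-1) + \frac{2n-1-j}{2n-1}\,\IP(W_{n-1}=j),\qquad \IP(W_0=1)=1,
\]
which uniquely determines the law of $W_n$. It then remains to check that the bridge formula satisfies this recursion. Substituting and cancelling common factorial factors, the verification reduces to the elementary identity $(j-2)+2(n-j+1)=2n-j$, so the two distributions agree and $L_{2n}^b\sim F^{n,1}_{0,1}$.

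The main obstacle is this algebraic verification—elementary but requiring careful bookkeeping with binomial coefficients and factorials. An alternative would be a bijective proof coupling a uniform bridge with a sequence of urn draws so that the number of zeros matches the final white count, perhaps via the cycle lemma (which gives an $(n{+}1)$-to-$1$ correspondence between bridges of length $2n$ and Dyck paths) or a R\'emy-style tree construction in the spirit of Proposition~\ref{prop1}; however, I do not see an obviously clean such bijection, so I would favor the direct recursion-matching approach.
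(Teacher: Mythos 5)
Your argument is correct, but it is a genuinely different route from the paper's. The paper proves this in one line from the tree--bridge bijection of Figure~\ref{fig7}: the zeros of the bridge correspond exactly to the nodes on the path from Leaf~1 to the root (the spine), so the claim is Proposition~\ref{prop1} with $k=1$ and $n$ replaced by $n+1$, with R\'emy's algorithm supplying the urn dynamics. You instead identify both laws with the explicit formula $\IP(L^b_{2n}=j)=2^{j-1}\tfrac{j-1}{2n-j+1}\binom{2n-j+1}{n}/\binom{2n}{n}$ --- the excursion decomposition and Catalan convolution on the bridge side, and verification of the forward urn recursion on the other --- and indeed the check collapses to $(j-2)+2(n-j+1)=2n-j$ after cancelling $\binom{2n}{n}=\tfrac{2(2n-1)}{n}\binom{2n-2}{n-1}$, so the computation goes through. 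The only bookkeeping caveat is that your closed formula is valid for $n\geq 1$, $j\geq 2$ but does not reproduce the initial condition $\IP(W_0=1)=1$ (it vanishes at $j=1$), so the induction should be anchored at $n=1$, where both laws are the point mass at $j=2$. What the two approaches buy: yours is self-contained and even yields the law of $F^{n,1}_{0,1}$ in closed form, which the paper never writes down; the paper's bijective argument is essentially free once the spine picture is set up, and that same picture is what drives Propositions~\ref{prop2}, \ref{prop6} and~\ref{prop7}, so it scales to the neighbouring results in a way that a formula-matching proof does not.
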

\begin{pf} The proof is straightforward by observing that the number of
visits to the origin $L^b_{2n}$ is exactly the number of nodes in the
path from Leaf~1 to the root and then applying Proposition~\ref{prop1}
with $k=1$ and $n$ replaced by $n+1$.
\end{pf}

\subsection*{Random walk meander}

We use a well-known bijection between random walk bridges of
length $2n$ and meanders of length $2n+1$; see Figure~\ref{fig8}.
Start the meander with one positive step. Then, follow the absolute
value of the bridge, except that the last step of every negative
excursion is flipped. Alternatively, consider the random walk bridge
difference sequence. Leave all the steps belonging to positive
excursions untouched, and multiply all steps belonging to negative
excursions by $-1$, except for the last step of each respective
negative excursion (which must necessarily be a ``$+$1''). Now, start the
meander with one positive step and then follow the new difference sequence.

%
%
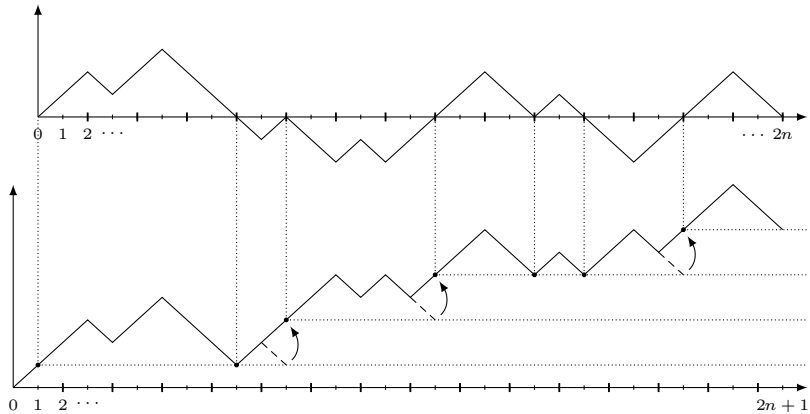
\begin{figure}
\hfill
\begin{tikzpicture}[xscale=.33,yscale=.3]\tiny
\begin{scope}
\draw[-latex] (0,0) -- (0,5);
\draw[-latex] (0,0) -- (31,0);
\foreach\x in {1,2,...,30}
\draw(\x,-0.1) -- (\x,0.1);
\foreach\x in {2,4,...,30}
\draw[line width=0.7pt] (\x,-0.2) -- (\x,0.2);
\draw
(0,0)--(1,1)--(2,2)--(3,1)--(4,2)--(5,3)--(6,2)--
(7,1)--(8,0)--(9,-1)--(10,0)--(11,-1)--(12,-2)--
(13,-1)--(14,-2)--(15,-1)--(16,0)--(17,1)--(18,2)--(19,1)--(20,0)--
(21,1) --
(22,0) --
(23,-1) --
(24,-2) --
(25,-1) --
(26,0) --
(27,1) --
(28,2) --
(29,1) --
(30,0)
;
\node at (0,0) [below=2pt] {$0$};
\node at (1,0) [below=2pt] {$1$};
\node at (2,0) [below=2pt] {$2$};
\node at (3,0) [below=2pt] {$\cdots$};
\node at (29,0) [below=2pt] {$\cdots$\phantom{1}};
\node at (30,0) [below=2pt] {$2\kern-0.2ex n$};
\end{scope}
\begin{scope}[xshift=-1cm,yshift=-12cm]
\draw[-latex] (0,0) -- (0,9);
\draw[-latex] (0,0) -- (32,0);
\foreach\x in {1,2,...,31}
\draw(\x,-0.1) -- (\x,0.1);
\foreach\x in {2,4,...,30}
\draw[line width=0.7pt] (\x,-0.2) -- (\x,0.2);
\draw(0,0) -- (1,1);
\drawdot{1,1};
\begin{scope}[xshift=1cm,yshift=1cm]
\draw[densely dotted] (0,0) -- (31,0) (10,2)--(31,2) (16,4)--(31,4)
(26,6)--(31,6);
\draw[densely dotted] (0,0)--(0,11) (8,0)--(8,11) (10,2)--(10,11)
(16,4)--(16,11) (20,4)--(20,11) (22,4)--(22,11) (26,6)--(26,11);
\drawdot{8,0}; \drawdot{10,2}; \drawdot{16,4}; \drawdot{20,4};
\drawdot{22,4}; \drawdot{26,6};
\draw[densely dashed] (9,1) -- (10,0) (15,3) -- (16,2) (25,5)--(26,4);
\draw[-latex,shorten >=1mm,shorten <=1mm]
(10,0)..controls +(45:8mm) and +(300:10mm)..(10,2);
\draw[-latex,shorten >=1mm,shorten <=1mm]
(16,2)..controls +(45:8mm) and +(300:10mm)..(16,4);
\draw[-latex,shorten >=1mm,shorten <=1mm]
(26,4)..controls +(45:8mm) and +(300:10mm)..(26,6);
\draw
(0,0)--(1,1)--(2,2)--(3,1)--(4,2)--(5,3)--(6,2)--
(7,1)--(8,0)--(9,1)--(10,2)--(11,3)--(12,4)--
(13,3)--(14,4)--(15,3)--(16,4)--(17,5)--(18,6)--(19,5)--(20,4)--
(21,5) --
(22,4) --
(23,5) --
(24,6) --
(25,5) --
(26,6) --
(27,7) --
(28,8) --
(29,7) --
(30,6)
;
\end{scope}
\node at (0,0) [below=2pt] {$0$};
\node at (1,0) [below=2pt] {$1$};
\node at (2,0) [below=2pt] {$2$};
\node at (3,0) [below=2pt] {$\cdots$};
\node at (31,0) [below=2pt] {$2\kern-0.2ex n + 1$};
\end{scope}
\end{tikzpicture}
\hfill\hfill
\caption{Illustration of the bijection between a random walk
bridge of length $2n$ (above) and a meander of length $2n+1$ (below).}
\label{fig8}
\end{figure}
%

%
%
\begin{proposition}[(Final height of meander)]\label{prop6}
If $n\geq0$, $N^*_{n} \sim{\mathcal{P}}^{1}_n(0,1)$, $X_n\sim\Bi
(N^*_{n}-1,1/2)$ and $Y_n\sim\Bi(N^*_{n},1/2)$, then
\[
S^m_{2n+1}(2n+1) \sim\law(2 X_n + 1),\qquad
S^m_{2n+2}(2n+2) \sim\law(2 Y_n\mid
Y_n>0).
\]
\end{proposition}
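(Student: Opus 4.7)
The plan is to match closed-form expressions for both distributions, using Proposition~5 to identify $N_n \ed L^b_{2n}$ (the local time of a uniform random bridge of length $2n$) as the key input.

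First, the cycle lemma (Dvoretzky--Motzkin) applied to meanders yields
\[
\IP[S^m_{2n+1}(2n+1) = 2k+1] = \frac{(2k+1)\binom{2n+1}{n-k}}{(2n+1)\binom{2n}{n}}, \qquad \IP[S^m_{2n+2}(2n+2) = 2k] = \frac{k\binom{2n+2}{n+1-k}}{(n+1)\binom{2n+1}{n}},
\]
for $k \geq 0$ and $k \geq 1$ respectively, after dividing by the total number of meanders ($\binom{2n}{n}$ and $\binom{2n+1}{n}$). By Proposition~5 and the symmetry $B \mapsto -B$ applied excursion-wise, a uniform random bridge $B_{2n}$ with $L^b_{2n} = m$ has each of its $m-1$ excursions independently above or below the axis with probability $1/2$. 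Hence $X_n \sim \Bi(N_n-1,1/2)$ is distributed as the number of positive excursions of $B_{2n}$, while $Y_n \sim \Bi(N_n,1/2)$ is this count plus an independent fair Bernoulli trial.

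To complete the proof, introduce $u = u(x) = (1-\sqrt{1-4x})/2 = \sum_{s\geq 1} C_{s-1} x^s$, which satisfies the functional equation $u(1-u) = x$. Since each excursion between consecutive zeros contributes a factor of $2u$ (sign choice times the one-sided excursion generating function), the number $c_{n,m}$ of bridges of length $2n$ with $L^b_{2n} = m$ has generating function $(2u)^{m-1}$. Summing with binomial weights,
\[
\binom{2n}{n}\IP[X_n = k] = [x^n]\sum_m \binom{m-1}{k} u^{m-1} = [x^n]\frac{u^k}{(1-u)^{k+1}} = [x^n]\frac{u^{2k+1}}{x^{k+1}},
\]
where the last equality uses $u(1-u) = x$. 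Lagrange inversion applied to $u = x/(1-u)$ gives $[x^n] u^r = (r/n)\binom{2n-r-1}{n-r}$, which after simplification matches the meander formula from the first step. The even case is analogous with $\binom{m}{k}$ in place of $\binom{m-1}{k}$, leading to $[x^n] u^{2k}/(2x^{k+1})$; the conditioning on $Y_n > 0$ is handled by separately evaluating $\IP[Y_n = 0] = \IE[2^{-L^b_{2n}}] = 1/(2(n+1))$ via the same generating function technique.

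The main obstacle is the bookkeeping in the Lagrange inversion step and the $Y_n > 0$ conditioning; conceptually the argument reduces to three components: (i) explicit meander counts via the cycle lemma, (ii) the bridge excursion decomposition granted by Proposition~5, and (iii) a Lagrange inversion identity matching the resulting sums.
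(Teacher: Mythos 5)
Your argument is correct --- I checked the key identities (the ballot-problem counts, the reduction of $\sum_m \binom{m-1}{k}u^{m-1}$ to $u^{2k+1}/x^{k+1}$ via $u(1-u)=x$, the Lagrange inversion $[x^N]u^r = (r/N)\binom{2N-r-1}{N-r}$, and $\IP[Y_n=0]=1/(2(n+1))$) and they all match the meander probabilities, including the normalization by $\IP[Y_n>0]=(2n+1)/(2(n+1))$. But the route is genuinely different from the paper's. The paper proves the proposition by an explicit pathwise bijection: a meander of length $2n+1$ is obtained from a bridge of length $2n$ by prepending an up-step and following the absolute value of the bridge with the last step of each negative excursion flipped, so the final height is $1$ plus twice the number of negative excursions; Lemma~\ref{lem11} (binomially distributed left-edges in the spine, given the spine length) then identifies that count as $\Bi(N_n-1,1/2)$, and the length-$(2n+2)$ case follows by appending one free step and conditioning on positivity. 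Your proof instead verifies equality of the two distributions by closed-form enumeration: cycle-lemma counts for meanders on one side, and on the other side the excursion decomposition of the bridge (granted by Proposition~\ref{prop5} and the uniform sign vector) fed through the generating function $(2u)^{m-1}$ and Lagrange inversion. What the paper's bijection buys is a structural explanation that transports the statistic directly and meshes with the recursive R\'emy/urn constructions used throughout; what your computation buys is independence from the bridge--meander bijection and from Lemma~\ref{lem11}, at the cost of needing the explicit formulas and some bookkeeping. One presentational remark: you do not actually need the probabilistic statement that the excursion signs are i.i.d.\ fair coins --- your computation only uses $\IP[N_n=m]=c_{n,m}/\binom{2n}{n}$ together with the defining mixture $\IP[X_n=k]=\sum_m\IP[N_n=m]\binom{m-1}{k}2^{-(m-1)}$ --- though the excursion interpretation is a pleasant way to motivate it.
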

\begin{pf} It is clear that every negative excursion in the random walk
will increase the final height of the meander by two. Since the number
of negative excursions equals the number of left-edges in the spine of
the corresponding binary tree, the first identity follows directly from
Lemma~\ref{lem1} for $k=1$. To obtain a meander of length $2n+2$,
proceed as follows. First, consider a meander of length $2n+1$,
let $2X_n+1$ be its final height, and then add one additional time step
to the meander by means of an independent fair coin toss. The resulting
process is a simple random walk, conditioned to be positive from
time steps $1$ to $2n+1$. The height of this process at time $2n+2$ has
distribution $2Y_n$, where we can take $Y_n = X_n+J$ and where $J\sim
\Be(1/2)$ independent of $X_n$. However, the final height of this
process may now be zero. Hence, conditioning on the path being positive
results in a meander of length $2n+2$. This proves the second identity.
\end{pf}

%
%
\begin{figure}
\begin{center}
\begin{tikzpicture}[scale=0.55]\tiny
\tikzstyle{level 1}=[sibling distance=4.5cm]
\tikzstyle{level 2}=[sibling distance=2.25cm]
\tikzstyle{level 3}=[sibling distance=1cm]
\node at (0,0) [int] (1) {\phantom{$\cdot$}}
child { node [int] {\phantom{$\cdot$}}
child { node [int] {\phantom{$\cdot$}}
child { node [ext] {\fixbox{7}}}
child { node [ext] {\fixbox{23}}}
}
child { node [int] {\phantom{$\cdot$}}
child { node [int] {\phantom{$\cdot$}}
child { node [ext] {\fixbox{18}}}
child { node [ext] {\fixbox{17}}}
}
child { node [ext] {\fixbox{22}}}
}
}
child { node [int] (2) {\phantom{$\cdot$}}
child { node [int] (3) {\phantom{$\cdot$}}
child { node [int] (4) {\phantom{$\cdot$}}
child { node [int] {\phantom{$\cdot$}}
child { node [ext] {\fixbox{3}}}
child { node [ext] {\fixbox{15}}}
}
child { node [ext,line width=1pt] (5) {\fixbox{\textbf{1}}}}
}
child { node [ext] {\fixbox{8}}}
}
child { node [int] {\phantom{$\cdot$}}
child { node [int] {\phantom{$\cdot$}}
child { node [ext] {\fixbox{9}}}
child { node [ext] {\fixbox{4}}}
}
child { node [ext] {\fixbox{16}}}
}
}
;
\draw[line width=1pt] (1) -- (2) -- (3) -- (4) -- (5);
\tikzstyle{level 1}=[sibling distance=4.5cm]
\tikzstyle{level 2}=[sibling distance=2.25cm]
\tikzstyle{level 3}=[sibling distance=1cm]
\node at (10,0) [int] (M1) {\phantom{$\cdot$}}
child { node [int] (M2) {\phantom{$\cdot$}}
child { node [int] {\phantom{$\cdot$}}
child { node [ext] {\fixbox{19}}}
child { node [ext] {\fixbox{21}}}
}
child { node [int] (M3) {\phantom{$\cdot$}}
child { node [int] (M4) {\phantom{$\cdot$}}
child { node [ext] {\fixbox{2}}}
child { node [ext,line width=1pt] (M5)
{\fixbox{$\boldsymbol{+}$}}}
}
child { node [ext] {\fixbox{14}}}
}
}
child { node [int] {\phantom{$\cdot$}}
child { node [int] {\phantom{$\cdot$}}
child { node [ext] {\fixbox{6}}
}
child { node [ext] {\fixbox{5}}}
}
child { node [int] {\phantom{$\cdot$}}
child { node [int] {\phantom{$\cdot$}}
child { node [ext] {\fixbox{10}}}
child { node [int] {\phantom{$\cdot$}}
child { node [ext] {\fixbox{12}}}
child { node [ext] {\fixbox{20}}}
}
}
child { node [ext] {\fixbox{11}}
}
}
}
;
\draw[line width=1pt] (M1) -- (M2) -- (M3) -- (M4) -- (M5);

\begin{scope}[xshift=-4cm,yshift=-11cm,scale=0.425]
\draw[-latex] (0,0) -- (0,6);
\draw[-latex] (0,0) -- (46,0);
\foreach\x in {1,2,...,45}
\draw(\x,-0.1) -- (\x,0.1);
\foreach\x in {2,4,...,44}
\draw[line width=0.7pt] (\x,-0.2) -- (\x,0.2);
\draw(0,0) --
(1,1) --
(2,2) --
(3,3) --
(4,2) --
(5,1) --
(6,2) --
(7,3) --
(8,2) --
(9,1) --
(10,0) --
(11,-1) --
(12,-2) --
(13,-1) --
(14,-2) --
(15,-1) --
(16,0) --
(17,-1) --
(18,0) --
(19,1) --
(20,2) --
(21,1) --
(22,0);
\draw[line width=1pt] (22,0)--(23,1);
\drawdot{22,0}; \drawdot{23,1};
\draw[densely dashed]
(23,0) --
(24,-1) --
(25,-2) --
(26,-3) --
(27,-2) --
(28,-3) --
(29,-4) --
(30,-3) --
(31,-2) --
(32,-1) --
(33,-2) --
(34,-1) --
(35,0) --
(36,1) --
(37,2) --
(38,1) --
(39,0) --
(40,-1) --
(41,0) --
(42,1) --
(43,0)
;
\draw
(23,1) --
(24,2) --
(25,3) --
(26,4) --
(27,3) --
(28,4) --
(29,5) --
(30,4) --
(31,3) --
(32,2) --
(33,3) --
(34,2) --
(35,3) --
(36,4) --
(37,5) --
(38,4) --
(39,3) --
(40,4) --
(41,5) --
(42,6) --
(43,5)
;
\end{scope}
\end{tikzpicture}
\end{center}
\caption{Bijection between a pair of decorated binary trees with
a total size of $2n+2$ and an unconditional random walk of
length $2n+1$. The meander part of the walk is constructed through a
random walk bridge, which is plotted in dashed lines.}
\label{fig9}
\end{figure}

\subsection*{Unconditional random walk}
In order to represent an unconditional random walk of length $2n+1$, we
use two decorated binary trees, the first tree representing the bridge
part of the random walk (i.e., the random walk until the last return to
the origin) and the second tree representing the meander part (i.e.,
the random walk after the last return to the origin); see Figure~\ref
{fig9}. Note that every random walk of odd length has a meander part.
First, with equal probability, start either with the two trivial trees
\extnode{1} and \extnode{{$\scriptstyle+$}} or with the two trivial
trees \extnode{1} and \extnode{{$\scriptstyle-$}} [representing the
random walk $S_1$ with $S_1(1)=1$, resp., $S_1(1) = -1$]. Then,
perform R\'emys algorithm in exactly the same way as for a single tree.
That is, at each time step, a random node is chosen uniformly among all
nodes of the two trees and then an internal node as well as a new leaf
are inserted. From these two trees, the random walk is constructed in a
straight forward manner: the first tree represents the bridge part,
whereas the second tree represents the meander part (if the initial
second tree was \extnode{{$\scriptstyle-$}}, then the whole meander
is first constructed as illustrated in Figure~\ref{fig8} and is then
flipped to become negative).

%
%
\begin{proposition}[(Occupation time of random walk)]\label{prop7}
If $n\geq0$, then
\[
L_{2n} \sim{\mathcal{P}}^{1}_n(1,1), \qquad
L_{2n+1} \sim{\mathcal{P}}^{1}_n(1,1).
\]
\end{proposition}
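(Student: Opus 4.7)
The plan is to reduce both claims to $L_{2n}\sim F^{n,1}_{1,1}$ and then prove that single identity by matching explicit point probabilities. Since a simple symmetric random walk started at $0$ cannot be at $0$ at an odd time, $L_{2n+1}=L_{2n}$ almost surely, so both claims of the proposition follow from $\law(L_{2n})=F^{n,1}_{1,1}$.

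To establish this identity I will verify the common explicit mass function
\ben{\label{p7mass}
	\IP(L_{2n}=k+1)=\binom{2n-k}{n}2^{-(2n-k)}=\IP(W_n=k+1),\qquad 0\leq k\leq n,
}
where $W_n\sim F^{n,1}_{1,1}$. On the urn side, I will induct on $n$. The base case is trivial since $W_0=1$. For the induction step, before the $(n+1)$st draw the urn contains exactly $2n+2$ balls, so the one-step recursion
\be{
	\IP(W_{n+1}=k+1)=\tfrac{2n+1-k}{2n+2}\,\IP(W_n=k+1)+\tfrac{k}{2n+2}\,\IP(W_n=k)
}
together with the inductive hypothesis and the algebraic identity $(2n-k+2)\binom{2n-k+1}{n}=(n+1)\binom{2n-k+2}{n+1}$ yields the formula at $n+1$. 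On the walk side, \eq{p7mass} will follow from the strong-Markov decomposition $\IP(L_{2n}=k+1)=\IP(\tau_k\leq 2n<\tau_{k+1})$, where $\tau_k=\tau_1+\dots+\tau_k$ is a sum of i.i.d.\ first-return times to $0$, combined with the classical identities $\IE z^{\tau_1}=1-\sqrt{1-z^2}$ and $\IP(\tau_1>2\ell)=\binom{2\ell}{\ell}2^{-2\ell}$ and a coefficient extraction of the form $[w^n](1-\sqrt{1-w})^k/\sqrt{1-w}=\binom{2n-k}{n}2^{-(2n-k)}$.

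The main obstacle is this last coefficient extraction on the walk side. It is classical and admits a short proof via the factorization $1-\sqrt{1-4z}=2zC(z)$ for the Catalan generating function $C(z)$ together with $[z^m]C(z)^k=\tfrac{k}{m+k}\binom{2m+k-1}{m}$, but it is the only nonroutine piece of the argument; the urn induction, the strong-Markov set-up, and the reduction from odd to even length are all mechanical.
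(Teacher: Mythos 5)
Your proof is correct, but it takes a genuinely different route from the paper's. The paper proves the odd case first via the bijection of Figure~\ref{fig9} between a pair of decorated binary trees of total size $2n+2$ and an unconditioned walk of length $2n+1$: the number of visits to the origin equals the number of nodes on the path from Leaf~1 to the root of the first (bridge) tree, and R\'emy's algorithm run on the pair of trees is precisely the urn of Proposition~\ref{prop1} started with one extra black ball (the leaf of the second tree), giving $F^{n,1}_{1,1}$ directly; the even case then follows by deleting the last step, which is your reduction $L_{2n+1}=L_{2n}$ run in the opposite direction. You instead identify the common law explicitly as $\IP(L_{2n}=k+1)=\binom{2n-k}{n}2^{-(2n-k)}$, verifying it on the urn side by induction (your one-step recursion and the identity $(2n-k+2)\binom{2n-k+1}{n}=(n+1)\binom{2n-k+2}{n+1}$ do close the induction -- I checked that the two terms combine to $2(2n-k+2)\binom{2n-k+1}{n}$ as needed) and on the walk side by the classical Chung--Hunt computation via first-return times. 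Both arguments are sound. What the paper's approach buys is a bijective explanation that produces an actual coupling, requires no explicit formulas, and extends uniformly to the conditioned walks (bridge, excursion, meander) treated in the neighbouring propositions; what yours buys is a self-contained verification that also exhibits the common distribution in closed form, at the cost of importing the generating-function identity $[w^n](1-\sqrt{1-w})^k/\sqrt{1-w}=\binom{2n-k}{n}2^{-(2n-k)}$, which you correctly flag as the one nonroutine step and for which your Catalan-factorization sketch is a valid route.
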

\begin{pf} Note that the number of visits to the origin is exactly the
number of nodes in the path from Leaf $1$ (which is always in the first
tree) to the root. Hence, we can use a similar urn embedding as for
Proposition~\ref{prop1} with $k=1$, except that at the beginning the
urn contains one black ball and one white ball (the black ball
representing the leaf of the second tree).

This proves the second identity of the proposition. To obtain the first
identity, take a random walk of length $2n+1$ and remove the last time
step, obtaining a random walk of length $2n$. Since the number of
visits to the origin cannot be changed in this way, the first identity follows.
\end{pf}

%
%
\begin{remark}
Proposition~\ref{prop5} is implicitly used in
\citet{Pitman2006}, Exercise~7.4.14. The other
propositions do
not appear to have been stated explicitly in the literature.
\end{remark}

\begin{pf*}{Proof of Theorem~\ref{thm3}}
Cases (i) and (ii) are immediate from Theorem~\ref{thm1} in
combination with Proposition~\ref{prop7} and Proposition~\ref{prop5},
respectively. Using Proposition~\ref{prop4}, case (iii) is proved in
essentially the same way as case (iii) of Theorem~\ref{thm2}, also
noting that the total variation error introduced by using $K$ instead
of $K'$ is of order $\bigo(n^{-1})$. Using Proposition~\ref{prop6},
case (iv) for odd $n$ is also proved in essentially the same way as
case (iii) of Theorem~\ref{thm2}.

In order to prove case (iv) for even $n$, note that the total
variation distance between $\law(Y_n)$ and $\law(Y_n|Y_n>0)$ is $\IP
[Y_n=0]=\IE2^{-N_n^*}$. Let $Z\sim\GG(2,2)$; using Theorem~\ref{thm1},
\begin{eqnarray*}
\IE2^{-N_n^*} & \leq&\IP\bigl[N_n < \tfrac{1}{2}
\log_2 n\bigr] + 2^{-{1/2}\log_2
n}
\\
& \leq&\IP\bigl[Z < \tfrac{1}{2}\mu^{-1}_n
\log_2 n\bigr] + \dk\bigl(\law(N_n/\mu
_n),\law(Z) \bigr) + n^{-1/2} =\bigo\bigl(n^{-1/2}
\bigr).
\end{eqnarray*}
Now, estimating $\dk(\law(2 Y_n), \GG(2,2) )$ again
follows the
proof of case (iii) of Theorem~\ref{thm2}.
\end{pf*}

\section{Proof of urn Theorem~\texorpdfstring{\protect\ref{thm1}}{1.2}}\label{sec2}

In order to prove Theorem~\ref{thm1}, we need a few lemmas.

%
%
\begin{lemma}\label{lem3}
Let $b\geq0$, $w>0$, $\urn_n=\urn_{n}(b,w)\sim{\mathcal
{P}}^l_n(b,w)$ and
let $n_{i}=n_i(b,w)=w+b+i+\lfloor i/l \rfloor$ be the total
number of balls in the $ {\mathcal{P}}^l_n(b,w)$ urn after the $i$th draw.
If $m\geq1$ is an integer and $D_{n,m}(b,w):=\prod_{i=0}^{m-1}
(i+\urn_{n}(b,w))$, then
%
%
\begin{equation}
\label{20} \IE D_{n,m}(b,w) = \prod_{j=0}^{m-1}
(w+j) \prod_{i=0}^{n-1}\bigl(1+m/n_{i}(b,w)
\bigr)
\end{equation}
and for some positive values $c:=c(b,w,l,m)$ and $C:=C(b,w,l,m)$ not
depending on $n$ we have
%
%
\begin{equation}
\label{21} cn^{ml/(l+1)}<\IE\bigl[\urn_{n}(b,w)^m
\bigr]<Cn^{ml/(l+1)}.
\end{equation}
\end{lemma}

\begin{pf} Fix $b,w$ and write $D_{n,m}=D_{n,m}(b,w)$. We first
prove~\eq{20}. Conditioning on the contents of the urn after draw and
replacement $n-1$, and noting that at each step, the number of white
balls in the urn either stay the same
or increase by exactly one, we have
\begin{eqnarray*}
\IE\{D_{n,m}|\urn_{n-1}\} &=& \frac{\urn_{n-1}}{n_{n-1}}
\frac{ D_{n-1,m}(\urn_{n-1}+m)}{\urn_{n-1}}+ \frac{n_{n-1}-\urn
_{n-1}}{n_{n-1}}D_{n-1,m}
\\
&=&(1+m/n_{n-1}) D_{n-1,m},
\end{eqnarray*}
which when iterated yields~\eq{20}.

By the definition of $n_i$,
\[
i+w+b-1+i/l\leq n_{i}\leq i+w+b+i/l,
\]
and now setting $x=l/(l+1)$ and $y=(w+b-1)l/(l+1)$, we find for some
constants $c, C$ not depending on $n$ that
%
%
\begin{equation}
\qquad c n^{mx}\leq c \frac{\Gamma(mx+y+x+n)}{\Gamma(y+x+n)} \leq\IE D_{n,m}
\leq C
\frac{ \Gamma(mx+y+n)}{\Gamma(y+n)} \leq C n^{mx}.\label{22}
\end{equation}
The upper bound follows from this and the easy fact that $\IE\urn
_n^m\leq\IE D_{n,m}$.
The lower bound follows from~\eq{22} and the following inequality
which follows from Jensen's inequality $\IE\urn_n^m = \IE
D_{n,1}^m\geq(\IE D_{n,1} )^m$.
\end{pf}

Our next result implies that biasing the distribution ${\mathcal{P}}^l_n(b,w)$
against the $r$ rising factorial
is the same as adding $r$ white balls to the urn before starting the
process, and then removing $r$ white balls at the end. We will only use
the lemma for $r=l+1$, but state and prove it for general $r$ because
it is an interesting result in its own right.

%
%
\begin{lemma}\label{lem4}
Let $\urn_{n}(b,w)$ and $D_{n,m}(b,w)$ be as in Lemma~\ref{lem3} and
let $r\geq2$.
If $\urn_n^{[r]}=\urn_n^{[r]}(b,w)$ is a random variable such that
%
%
\begin{equation}
\label{23} \IP\bigl[\urn_n^{[r]}=k\bigr]=
\frac{ [\prod_{i=0}^{r-1}(k+i) ]\IP
[\urn_n(b,w)=k]}{\IE D_{n,r}(b,w)},
\end{equation}
then
%
%
\begin{equation}
\urn_n(b,w+r)\ed\urn_n^{[r]}(b,w)+r.
\label{24}
\end{equation}
\end{lemma}

\begin{pf}
Since $\urn_n(b,w+r)$ and $\urn_n^{[r]}(b,w)+r$ are bounded variables,
the lemma follows by verifying their factorial moments are equal.
With $n_i(b,w)$ as in Lemma~\ref{lem3}, for any $m\geq1$ we have
\begin{eqnarray*}
\IE\prod_{i=0}^{m-1} \bigl(
\urn_n^{[r]}(b,w)+r+i\bigr)&=&\frac{\IE
D_{n,m+r}(b,w)}{\IE D_{n,r}(b,w)}
\\
&=& \prod
_{j=0}^{m-1} (w+r+j) \prod
_{i=1}^{n}\frac
{n_{i-1}(b,w)+m+r}{n_{i-1}(b,w)+r}
\\
&=&\IE D_{n,m}(b,w+r) = \IE\prod_{i=0}^{m-1}
\bigl(i+\urn_n(b,w+r)\bigr);
\end{eqnarray*}
the second and third equalities follow by~\eq{20} and the definition
of $n_i(b,w)$, and the last follows
from the definition of $D_{n,m}(b,w)$.
\end{pf}

%
%
\begin{lemma}\label{lem5}
For $\urn_n(1,w)\sim{\mathcal{P}}^l_n(1,w)$ and $l\geq1$,
there is a coupling of $\urn_n^{(l+1)}(1,w)$, a random variable having
the $(l+1)$-power bias distribution of $\urn_n(1,w)$, with a
variable $\urn_{n-l}(1,w+l+1) \sim{\mathcal{P}}^l_{n-l}(1,w+l+1)$
such that
for some constant $C:=C(w,l)$,
\[
\IP\bigl[\bigl\llvert\urn_{n-l}(1,w+l+1)- \urn_n^{(l+1)}(1,w)
\bigr\rrvert>2l+1 \bigr]\leq Cn^{-l/(l+1)}.
\]
\end{lemma}

\begin{pf}
Obviously, we can couple $\urn_n(1,w+l+1)\sim{\mathcal{P}}^l_n(1,w+l+1)$
with $\urn_{n-l}(1,w+l+1)$ so that
\[
\bigl\llvert\urn_{n-l}(1,w+l+1)-\urn_n(1,w+l+1)\bigr
\rrvert\leq l,
\]
and then Lemma~\ref{lem4}
implies that we may couple $\urn_{n}(1,w+l+1)$ with $\urn
_n^{[l+1]}(1,w)$ [with distribution defined at~\eq{23}] so that almost surely
\begin{eqnarray*}
&&\bigl\llvert\urn_{n-l}(1,w+l+1)-\urn_n^{[l+1]}(1,w)
\bigr\rrvert
\\
&& \qquad\leq\bigl\llvert\urn_{n-l}(1,w+l+1)-\bigl(\urn
_n^{[l+1]}(1,w)+l+1\bigr)\bigr\rrvert+l+1
\\
&&\qquad=\bigl\llvert\urn_{n-l}(1,w+l+1)-\urn_n(1,w+l+1)
\bigr\rrvert+l+1\leq2l+1.
\end{eqnarray*}
And we show
%
%
\begin{equation}
\dtv\bigl(\law\bigl(\urn_n^{[l+1]}(1,w) \bigr),\law\bigl(
\urn_n^{(l+1)}(1,w) \bigr) \bigr)\leq C n^{-l/(l+1)},
\label{25}
\end{equation}
where $\dtv$ is the total variation distance, which for integer-valued
variables $X$ and $Y$ can be defined in two ways:
\[
\dtv\bigl(\law(X),\law(Y)\bigr)=\frac{1}2\sum
_{z\in\IZ} \bigl\llvert\IP[X=z]-\IP[Y=z]\bigr\rrvert=\inf
_{(X,Y)}\IP[X\neq Y];
\]
here, the infimum is taken over all possible couplings of $X$ and $Y$.
Due to the
latter definition,~\eq{25} will imply the lemma since
\begin{eqnarray*}
&&\IP\bigl[\bigl\llvert\urn_{n-l}(1,w+l+1)-\urn_n^{(l+1)}(1,w)
\bigr\rrvert>2l+1 \bigr]
\\
&&\qquad=\IP\bigl[\bigl\llvert\urn_{n-l}(1,w+l+1)-\urn
_n^{(l+1)}(1,w)\bigr\rrvert
\\
&&\qquad >2l+1, \urn_n^{[l+1]}(1,w)
\neq\urn_n^{(l+1)}(1,w) \bigr]
\\
&&\qquad\leq\IP\bigl[\urn_n^{[l+1]}(1,w)\neq\urn
_n^{(l+1)}(1,w) \bigr].
\end{eqnarray*}

Let $\nu_m=\IE\urn_n^m(1,w)$ and note that we can write $\prod
_{i=0}^{l}(x+i)=\sum_{i=0}^{l+1} a_i x^i$ for nonnegative coefficients $a_i$
with $a_{l+1}=1$ (these coefficients are the unsigned Stirling
numbers). Also note that
for nonnegative integers $k$ and $0\leq i\leq l+1$, we have $k^i\leq
k^{l+1}$, and hence $\nu_i\leq\nu_{l+1}$.
Thus,
\begin{eqnarray*}
&&2\dtv\bigl(\law\bigl(\urn_n^{[l+1]}(1,w) \bigr),\law
\bigl(\urn_n^{(l+1)}(1,w) \bigr) \bigr)
\\
&&\qquad= \sum_{k\geq0} \bigl\llvert\IP\bigl[
\urn_n^{[l+1]}(1,w)=k\bigr]-\IP[\urn_n^{(l+1)}(1,w)=k)
\bigr\rrvert
\\
&&\qquad=\sum_{k} \biggl\llvert\frac{\prod_{i=0}^{l}(k+i)}{\IE
D_{n,l+1}(1,w)}
- \frac{k^{l+1}}{\nu_{l+1}}\biggr\rrvert\IP\bigl[\urn_n(1,w)=k\bigr]
\\
&&\qquad=\sum_{k} \Biggl\llvert
\Biggl(k^{l+1}+\sum_{i=0}^{l}
a_i k^i\Biggr)\nu_{l+1}-k^{l+1}
\Biggl(\nu_{l+1}+\sum_{i=0}^{l}
a_i \nu_i\Biggr)\Biggr\rrvert\frac{\IP
[\urn_n(1,w)=k]}{\nu_{l+1}\IE D_{n,l+1}(1,w)}
\\
&& \qquad\leq C\nu_{l}/\IE D_{n,l+1}(1,w) \leq
Cn^{-l/(1+l)},
\end{eqnarray*}
where the last line follows from~\eq{21} of Lemma~\ref{lem3}. This
proves the lemma.
\end{pf}

Below let ${\mathcal{P}}_n(b,w)$ be the distribution of the number of
white balls
in the classical P\'olya urn started with $b$ black balls and $w$ white
balls after $n$ draws. Recall that in the classical P\'olya urn balls
are drawn and returned to the urn along with an additional ball of the
same color
[the notation is to suggest ${\mathcal{P}}^\infty_n(b,w)={\mathcal
{P}}_n(b,w)$].

%
%
\begin{lemma}\label{lem6}
There is a coupling $(Q_{w}(n), n V_w)_{n\geq1}$ with $Q_{w}(n) \sim
{\mathcal{P}}_n(1,w)$ and $V_w\sim\B(w,1)$ such that $|Q_{w}(n)-n
V_w|\leq w+1$
for all $n$ almost surely.
\end{lemma}

\begin{pf}
Using \citet{Feller1968}, equation~(2.4), page~121,
for $w\leq t
\leq w+n$ we obtain
%
%
\begin{equation}
\label{26} \IP\bigl[Q_{w}(n)\leq t\bigr]=\prod
_{i=0}^{w-1} \frac{t-i}{n+w-i}.
\end{equation}
For $U_0, U_1, \ldots, U_{w-1}$ i.i.d. uniform $(0,1)$ variables, we
may set
\[
Q_w(n)=\max_{i=0,1,\ldots, w-1} \bigl(i+\bigl
\lceil(n+w-i)U_i\bigr\rceil\bigr),
\]
since it is not difficult to verify that this gives the same cumulative
distribution function as in~\eq{26}.
By a well-known
representation of the beta distribution, we can take $V_w=\max
(U_{0},\ldots, U_{w-1})$,
and with this coupling the claim follows.
\end{pf}

%
%
\begin{lemma}\label{lem7}
If\/ $\urn_{n}(0,w+1) \sim{\mathcal{P}}^l_{n}(0,w+1)$
then
\[
{\mathcal{P}}_n^l(1,w) = {\mathcal{P}}
_{\urn_{n}(0,w+1)-w-1}(1,w).
\]
\end{lemma}

\begin{pf}
Consider an urn with $1$ black ball and $w$ white balls. Balls are
drawn from the urn and replaced as follows. After the $m$th
ball is drawn, it is replaced in the urn along with another ball of
the same color plus, if $m$ is divisible by $l$, an additional green
ball. If $H$ is the number of times a nongreen ball is drawn in $n$
draws, the number of white balls
in the urn after $n$ draws is distributed as ${\mathcal{P}}_H(1,w)$.
The lemma follows after noting $H+w+1$ is distributed as ${\mathcal{P}}
^l_{n}(0,w+1)$ [which by definition is the distribution of $\urn
_{n}(0,w+1)$] and the number of white
balls in the urn after $n$ draws has distribution ${\mathcal{P}}^l_{n}(1,w)$.
\end{pf}

\begin{pf*}{Proof of Theorem~\ref{thm1}}
The asymptotic $\IE\urn_n^k \asymp n^{kl/(l+1)}$ is~\eq{21} of
Lemma~\ref{lem3}.
We now show that
\[
\lim_{n\to\infty}\frac{\IE\urn_n^{l+1}}{n^l} = w \biggl(\frac
{l+1}{l}
\biggr)^{l}.
\]
The asymptotic $\IE\urn_n^k \asymp n^{kl/(l+1)}$
implies that
\[
\frac{\IE\urn_n^{l+1}}{n^l}=\frac{\IE\prod_{i=0}^{l} (i+\urn
_n)}{n^{l}} + \lito(1).
\]
The numerator in the fraction on the right-hand side of the equality
can be
written using~\eq{20} from Lemma~\ref{lem3} with $b=1, w=w$ and $m=l+1$
as
\begin{eqnarray*}
\IE\prod_{i=0}^{l} (i+
\urn_n) &=& \frac{\Gamma(w+l+1)}{\Gamma(w)}\prod_{i=0}^{n-1+\lfloor{ \vfrac
{n-1}{l}}\rfloor}
\frac{ w+1+i+l+1}{w+1+i}
\\
&&{}\times \prod_{k=1}^{\lfloor
{ \vfrac{n-1}{l}}\rfloor}
\frac{w+1+k l + k-1}{w+1+k l + k+l},
\end{eqnarray*}
and simplifying, especially noting the telescoping product in the final
part of the term (which critically depends on having taken $m=l+1$), we have
\begin{eqnarray*}
\IE\prod_{i=0}^{l} (i+
\urn_n) &=& \frac{\Gamma(w+l+1)}{\Gamma(w)} \frac{\Gamma(w+2+l+n+\lfloor
{ \vfrac{n-1}{l}}\rfloor)\Gamma(w+1)}{\Gamma(w+l+2)\Gamma
(w+1+n+\lfloor{ \vfrac{n-1}{l}}\rfloor)}
\\
&&{}\times  \frac{w+1+l}{w+l+1+\lfloor{
\vfrac{n-1}{l}}\rfloor(l+1)}
\\
&=& w \frac{\Gamma(w+1+l+n+\lfloor{ \vfrac{n-1}{l}}\rfloor)}{\Gamma
(w+1+n+\lfloor{ \vfrac{n-1}{l}}\rfloor)}
\\
&&{}\times \frac{w+1+l+n+\lfloor{
\vfrac{n-1}{l}}\rfloor}{w+l+1+\lfloor{ \vfrac{n-1}{l}}\rfloor(l+1)}.
\end{eqnarray*}
The asymptotic for $\IE\urn_n^{l+1}$ now follows by taking the limit
as $n\to\infty$, using the well-known fact that, for $a>0$, $\lim_{x\to
\infty} \frac{\Gamma(x+a)}{\Gamma(x) x^a}=1$
with $x=w+1+n+\lfloor{ \frac{n-1}{l}}\rfloor$.

The claimed asymptotic for $\mu_n$ follows directly from that of $\IE
\urn_n^{l+1}$, and with
the order of the scaling $\mu_n$ in hand, the lower bound of
Theorem~\ref{thm1} follows from \citet{Pekoz2013}, Lemma~4.1,
which says that for a sequence of scaled integer valued random
variables $(a_n \urn_n)$,
if $a_n\to0$ and $\nu$ is a distribution with density bounded away
from zero on some interval, then there
is a positive constant $c$ such that $\dk(\law(a_n \urn_n), \nu
)\geq c a_n$.

To prove the upper bound we will invoke Theorem~\ref{thm4} and so we
want to closely couple variables having marginal distributions equal
to those of $\urn_n/\mu_n$ and $\urn^*=V_w \urn_n^{(l+1)}/\mu_n$.
Lemma~\ref{lem6} implies there is a coupling of
variables $(Q_w(n))_{n\geq1}$
with corresponding marginal distributions $({\mathcal
{P}}_n(1,w))_{n\geq1}$ satisfying
\[
\bigl\llvert V_w \urn_n^{(l+1)}-
Q_w\bigl(\urn_n^{(l+1)}\bigr)\bigr\rrvert\leq
w+1\qquad\mbox{almost surely.}
\]
Further, by Lemma~\ref{lem5} we can construct a variable $\urn
_{n-l}(1,w+l+1) \sim{\mathcal{P}}^l_{n-l}(1,w+l+1)$ such that
\[
\IP\bigl[\bigl\llvert Q_w\bigl(\urn_{n-l}(1,w+l+1)
\bigr)- Q_w\bigl(\urn_n^{(l+1)}\bigr)\bigr
\rrvert>2l+1 \bigr] \leq Cn^{-l/(l+1)};
\]
here we used that $\llvert Q_w(s)-Q_w(t)\rrvert \leq\llvert
s-t\rrvert $.
Recalling that $P^l_{n-l}(1,w+l+1) = P^l_{n}(0,w+1)$, Lemma~\ref{lem7}
says that we can set $\urn_n=Q_w(\urn_{n-l}(1,w+l+1)-w-1)$ and it is
immediate that
%
\begin{eqnarray}
&&\bigl\llvert Q_w\bigl(\urn_{n-l}(1,w+l+1)-w-1
\bigr)-Q_w\bigl(\urn_{n-l}(1,w+l+1)\bigr)\bigr\rrvert
\leq w+1\nonumber
\\
\eqntext{\mbox{almost surely.}}
\end{eqnarray}
Thus, if we set $b=(2w+2l+3)/\mu_n$ then using the couplings above we find
\[
\IP\bigl[ \bigl\llvert{\urn_n}/{\mu_n}-{V_w
\urn_n^{(l+1)}}/{\mu_n}\bigr\rrvert> b \bigr]
\leq C \mu_n^{-1} \leq Cn^{-l/(l+1)},
\]
where the last inequality follows from~\eq{21} of Lemma~\ref{lem3}
which also implies $b \leq Cn^{-l/(l+1)}$.
Using these couplings and the value of $b$ in Theorem~\ref{thm4}
completes the proof.
\end{pf*}

\section{Stein's method and proof of Theorem~\texorpdfstring{\protect\ref{thm4}}{1.16}}\label{sec3}

We first provide a general framework to develop Stein's method
for $\log$-concave densities. The generalized gamma is a special case
of this class. We use the density approach which is due to Charles
Stein [see \citet{Reinert2005}]. This approach has already been
discussed in other places in greater generality; see, for example,
\citet{Chatterjee2011a}, \citet{Chen2011} and \citet
{Dobler2012}. However, it seems to have gone unnoticed, at least
explicitly, that the approach can be developed much more directly
for $\log$-concave densities.

\subsection{Density approach for $\log$-concave distributions}\label{sec4}

Let $B$ be a function on the interval $(a,b)$ where $-\infty\leq a < b
\leq
\infty$. Assume also $B$ is absolutely continuous on $(a,b)$, $C_B =
\int_a^b e^{-B(z)}\,dz <\infty$ and
$B(a):=\lim_{x\to a^+} B(x)$ and $B(b):=\lim_{x\to b^-} B(x)$ exist
as values in $\IR\cup\{\infty\}$ and
we use these to extend the domain of $B$ to $[a,b]$. Assume that $B$
has a left-continuous derivative on $(a,b)$, denoted by $B'$. From $B$,
we can construct a distribution $P_B$ with probability density
function
\[
\varphi_B(x) = C_B e^{-B(x)}, \qquad a<x<b
\qquad\mbox{where }C_B^{-1} = \int_{a}^{b}
e^{-B(z)}\,dz.
\]
Let $L^1(P_B)$ be the set of measurable functions $h$ on $(a,b)$ such
that
\[
\int_a^b
\bigl\llvert h(x)\bigr\rrvert e^{-B(x)}\,dx < \infty.\]
The
distribution $P_B$ is $\log
$-concave if and only if $B$ is convex. However, before dealing with
this special case, we state a few more general results.

%
%
\begin{proposition}\label{prop8}
If $Z\sim P_B$, we have
\[
\IE\bigl\{f'(Z) - B'(Z)f(Z) \bigr\} = 0
\]
for all functions $f$ for which the expectations exists and for which
\[
\lim_{x\to a^+}f(x)e^{-B(x)}=\lim_{x\to b^-}f(x)e^{-B(x)}=0.
\]
\end{proposition}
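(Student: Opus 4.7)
The proof is a direct integration-by-parts argument, following the hint in the text. The plan is to write the expectation as
\be{
   \IE f'(Z) = \int_{a}^{b} f'(x)\, \phi_{B}(x)\, dx = C_{B}\int_{a}^{b} f'(x)\, e^{-B(x)}\, dx,
}
and transform the right-hand side by integrating by parts, moving the derivative from $f$ onto $e^{-B(x)}$.

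The crucial observation is that since $B$ is absolutely continuous on $(a,b)$, so is the map $x\mapsto e^{-B(x)}$, and its a.e.\ derivative equals $-B'(x)\, e^{-B(x)}$. Consequently $\phi_B$ itself is absolutely continuous with $\phi_B'(x) = -B'(x)\phi_B(x)$ almost everywhere, which is exactly the structure needed for the integration-by-parts formula for AC functions. Applied on a compact subinterval $[a',b']\subset(a,b)$ this yields
\be{
   \int_{a'}^{b'} f'(x)\phi_B(x)\,dx
   \;=\; \bigl[f(x)\phi_B(x)\bigr]_{a'}^{b'}
   \;+\; \int_{a'}^{b'} f(x) B'(x)\phi_B(x)\,dx.
}
Letting $a'\downarrow a$ and $b'\uparrow b$, the boundary term vanishes by the hypothesis $\lim_{x\to a}f(x)e^{-B(x)}=\lim_{x\to b}f(x)e^{-B(x)}=0$, and the remaining integral converges to $\IE\{B'(Z)f(Z)\}$ by the assumption $f\in L_1(B)$ together with existence of $\IE B'(Z)f(Z)$. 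Rearranging gives the Stein identity $\IE\{f'(Z) - B'(Z)f(Z)\}=0$.

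The only delicate point is making the limiting step $a'\to a$, $b'\to b$ rigorous: one needs the integrability of $f'\phi_B$ and $B'f\phi_B$ near the (possibly infinite) endpoints, but both are guaranteed by the standing hypothesis that the expectations in the statement exist. No additional analytic machinery beyond the fundamental theorem of calculus for absolutely continuous functions is required, so I do not anticipate a genuine obstacle; the proof is essentially a one-line computation once the AC regularity of $\phi_B$ is noted.
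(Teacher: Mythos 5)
Your proof is correct and is exactly the argument the paper intends: the paper's entire proof of Proposition~\ref{prop8} is the single sentence ``The proof follows by integration by parts,'' and you have simply filled in the standard details (absolute continuity of $x\mapsto e^{-B(x)}$, integration by parts on compact subintervals, and passage to the limit using the boundary hypothesis and existence of the expectations). No further comment is needed.
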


\begin{pf}Integration by parts. We omit the straightforward details.
\end{pf}

Now, for $h\in L^1(P_B)$ and $Z\sim P_B$, let
\[
\tilde h(x) = h(x)-\IE h(Z)
\]
and, for $x\in(a,b)$,
%
%
\begin{equation}
\label{27} f_h(x) = e^{B(x)} \int_a^x
\tilde h(z)e^{-B(z)}\,dz = - e^{B(x)} \int_x^b
\tilde h(z)e^{-B(z)}\,dz.
\end{equation}
The key fact is that $f_h$ satisfies the differential (Stein) equation
%
%
\begin{equation}
\label{28} f_h'(x) - B'(x)f_h(x)
= \tilde h(x),\qquad x\in(a,b).
\end{equation}
Define the Mills's-type ratios
%
%
\begin{equation}
\label{29} \kappa_a(x) = e^{B(x)}\int
_a^x e^{-B(z)}\,dz, \qquad
\kappa_b(x) = e^{B(x)}\int_x^b
e^{-B(z)}\,dz.
\end{equation}
From~\eq{27} and~\eq{28}, we can easily deduce the following
nonuniform bounds.

%
%
\begin{lemma}\label{lem8} If $h\in L^1(P_B)$ is bounded, then for
all $x\in(a,b)$,
%
%
\begin{eqnarray}
\bigl\llvert f_h(x)\bigr\rrvert&\leq&\llVert\tilde h\rrVert
\bigl(\kappa_a(x)\wedge\kappa_b(x) \bigr),
\label{30}
\\
\bigl\llvert f'_h(x)\bigr\rrvert&\leq&\llVert\tilde
h\rrVert\bigl\{1+ \bigl\llvert B'(x)\bigr\rrvert\bigl(
\kappa_a(x)\wedge\kappa_b(x) \bigr) \bigr\}.
\label{31}
\end{eqnarray}
\end{lemma}

In the case of convex functions, we can easily adapt the proof of
\citet{Stein1986} to obtain the following uniform bounds.

%
%
\begin{lemma}\label{lem9} If $B$ is convex on $(a,b)$ with unique
minimum $x_0\in[a,b]$, then for any $h\in L^1(P_B)$,
%
%
\begin{equation}
\llVert f_h\rrVert\leq\llVert\tilde h\rrVert\frac{e^{B(x_0)}}{C_B},
\qquad\bigl\llVert B'f_h\bigr\rrVert\leq\llVert
\tilde h\rrVert, \qquad\bigl\llVert f'_h\bigr\rrVert
\leq2\llVert\tilde h\rrVert. \label{32}
\end{equation}
\end{lemma}

\begin{pf} By convexity, we clearly have
%
%
\begin{equation}
\label{33} x_0\leq x \leq z \leq b\quad\Longrightarrow\quad B(x)
\leq B(z)\quad\mbox{and}\quad B'(x)\leq B'(z).
\end{equation}
This implies that for $x>x_0$
\[
\int_x^b e^{-B(z)} \,dz \leq\int
_x^b\frac{B'(z)}{B'(x)}e^{-B(z)}\,dz =
\frac{e^{-B(x)}-e^{-B(b)}}{B'(x)} \leq\frac{e^{-B(x)}}{B'(x)},
\]
where in the last bound we use~\eq{33} which implies $B'(x)>0$. So
%
%
\begin{equation}
\label{34} B'(x)\kappa_b(x) \leq1.
\end{equation}
Now, from this we have for $x>x_0$
\[
\kappa_b'(x) = -1+B'(x)
\kappa_b(x) \leq0.
\]
Similarly, we have
%
%
\begin{equation}
\label{35} a\leq z \leq x \leq x_0\quad\Longrightarrow\quad B(z)
\geq B(x)\quad\mbox{and}\quad\bigl\llvert B'(z)\bigr\rrvert\geq\bigl\llvert
B'(x)\bigr\rrvert.
\end{equation}
So, using~\eq{35}, for $x<x_0$,
\[
\int_a^x e^{-B(z)} \,dz \leq\int
_a^x\frac{\llvert B'(z)\rrvert }{\llvert
B'(x)\rrvert }e^{-B(z)}\,dz =
\frac{e^{-B(x)}-e^{-B(a)}}{\llvert B'(x)\rrvert } \leq\frac
{e^{-B(x)}}{\llvert B'(x)\rrvert },
\]
thus
%
%
\begin{equation}
\label{36} \bigl\llvert B'(x)\bigr\rrvert
\kappa_a(x) \leq1,
\end{equation}
and so for $x<x_0$
\[
\kappa_a'(x) = 1+B'(x)
\kappa_a(x) \geq0.
\]
From~\eq{30}, we obtain
\[
\llVert f\rrVert\leq\llVert\tilde h\rrVert\sup_x\cases{
\kappa_a(x), &\quad if $x< x_0$,
\cr
\kappa_b(x),
&\quad if $x\geq x_0$.}
\]
Hence, having an increasing bound on $x<x_0$ and a decreasing
bound on $x>x_0$, implies that there is a maximum at $x_0$ and
\[
\llVert f\rrVert\leq\llVert\tilde h\rrVert\bigl(\kappa_a(x_0)
\vee\kappa_b(x_0) \bigr).
\]
The first bound of~\eq{32} now follows from the fact that $\kappa
_a(x_0)\vee
\kappa_b(x_0)\leq\kappa_a(x_0) + \kappa_b(x_0)$. The
second bound of~\eq{32} follows from
\eq{30} in combination with~\eq{34} and~\eq{36}. Using~\eq{31}, the
third bound of~\eq{32} follows in the same way.
\end{pf}

%
%
\begin{remark}
Lemma~\ref{lem9} applies to the standard normal distribution in which case
$B(x)=x^2/2$, $x_0=0$, and $C_B=(2\pi)^{-1/2}$ and~\eq{32} implies
\[
\llVert f_h\rrVert\leq\llVert\tilde h\rrVert\sqrt{2\pi}, \qquad
\bigl\llVert f'_h\bigr\rrVert\leq2\llVert\tilde h
\rrVert.
\]
The best-known bounds are given in \citet{Chen2011}, Lemma~2.4,
which improve the first
bound by a factor of $2$ and match the second. In the special case of
the form $h(\cdot)=\I[\cdot\leq t]$,
\citet{Chen2011}, Lemma~2.3, matches the bound of
Lemma~\ref
{lem9} of $\llvert xf_h(x)\rrvert \leq\llVert\tilde
h\rrVert $.
\end{remark}

Though not used below explicitly, we record the following theorem
summarizing the
utility of the lemmas above.
%
%
\begin{theorem}
Let $B$ be convex on $(a,b)$ with unique minimum $x_0$, $Z\sim P_B$,
and $W$ be a random variable on $(a,b)$.
If $\mathcal{F}$ is the set of functions on $(a,b)$ such that
for $f\in\mathcal{F}$
\[
\llVert f\rrVert\leq\frac{e^{B(x_0)}}{C_B}, \qquad\bigl\llVert B'f
\bigr\rrVert\leq1, \qquad\bigl\llVert f'\bigr\rrVert\leq2,
\]
then
\[
\sup_{t\in(a,b)} \bigl\llvert\IP[Z\leq t]-\IP[W\leq t]\bigr\rrvert
\leq\sup_{f\in
\mathcal{F}} \bigl\llvert\IE\bigl\{f'(W)-B'(W)f(W)
\bigr\} \bigr\rrvert.
\]
\end{theorem}
\begin{pf}
For $t\in(a,b)$, if $h_t(x)=\I[x\leq t]$, then taking the expectation
in~\eq{28} implies that
%
%
\begin{equation}
\IP[W\leq t]-\IP[Z\leq t]= \IE\bigl\{f_t'(W)-B'(W)f_t(W)
\bigr\}, \label{37}
\end{equation}
where $f_t$ satisfies~\eq{28} with $h=h_t$.
Taking the absolute value
and the supremum over $t\in(a,b)$ on both sides of~\eq{37}, we find
\[
\sup_{t\in(a,b)} \bigl\llvert\IP[W\leq t]-\IP[Z\leq t]\bigr\rrvert
= \sup_{t\in
(a,b)} \bigl\llvert\IE\bigl\{f'_t(W)-B'(W)f_t(W)
\bigr\} \bigr\rrvert.
\]
The result follows since $h_t(x)\in[0,1]$ implies $\llVert\tilde
h\rrVert \leq1$,
and so by Lemma~\ref{lem9}, $f_t\in\mathcal{F}$ for all $t\in(a,b)$.
\end{pf}

Finally, we will need the following two lemmas to develop Stein's
method. The
proofs are standard, and can be easily adopted from the normal case;
see, for example, \citet{Chen2005} and Rai{\v{c}} (\citeyear{Raic2003}).

%
%
\begin{lemma}[(Smoothing inequality)]\label{lem10} Let $B$ be convex
on $(a,b)$ with unique minimum $x_0$ and let $Z\sim P_B$. Then, for
any random variable $W$ taking values in $(a,b)$ and for any $\eps>0$,
we have
\[
\dk\bigl(\law(W),\law(Z) \bigr) \leq\sup_{a<s<b}\bigl\llvert
\IE h_{s,\eps
}(W)-\IE h_{s,\eps}(Z)\bigr\rrvert+
C_Be^{-B(x_0)}\eps,
\]
where
%
%
\begin{equation}
\label{haha} h_{s,\eps}(x) = \frac{1}{\eps} \int
_0^\eps\I[ x\leq s+u]\,du.
\end{equation}
\end{lemma}

%
%
\begin{lemma}[(Bootstrap concentration inequality)]\label{lem11} Let $B$
be convex on $(a,b)$ with unique
minimum $x_0$ and let $Z\sim P_B$. Then, for any random variable $W$ taking
values in $(a,b)$, for any $a<x<b$, and for any $\eps>0$, we have
\[
\IP[s\leq W\leq s+\eps] \leq C_Be^{-B(x_0)}\eps+ 2\dk\bigl(
\law(W),\law(Z) \bigr).
\]
\end{lemma}

\subsection{Application to the generalized gamma distribution}\label{sec5}

We use the general results of Section~\ref{sec4} to prove the following
more explicit statement of Theorem~\ref{thm4} for the generalized
gamma distribution.
%
%
\begin{theorem}\label{thm5} Let\vspace*{1pt} $Z\sim\GG(\alpha,\beta)$ for
some $\alpha\geq1, \beta\geq1$ and let $W$ be a nonnegative random
variable with $\IE W^\beta= \alpha/\beta$. Let $W^*$
have the
$(\alpha,\beta)$-generalized equilibrium transformation of
Definition~\ref{def1}.
If $\beta=1$ or $\beta\geq2$, then for all $0<b\leq1$,
\begin{eqnarray*}
&& \dk\bigl(\law(W), \law(Z) \bigr)
\\
&&\qquad  \leq b \bigl[10 M_{\alpha,\beta}+2
\beta(
\beta-1) \bigl(1+2^{\beta-2} \bigl(\IE W^{\beta-1}+b^{\beta-1}
\bigr) \bigr)M_{\alpha,\beta}'+4 \beta\IE W^{\beta-1} \bigr]
\\
&&\quad\qquad{}+4 \bigl(2+(\beta+\alpha-1)M_{\alpha,\beta}' \bigr)\IP\bigl[
\bigl\llvert W-W^*\bigr\rrvert>b\bigr],
\end{eqnarray*}
where here and below
\begin{eqnarray*}
M_{\alpha,\beta}&:=&\alpha^{1-1/\beta} \beta^{1/\beta}
e^{-4/9+{1}/(6\sklvfrac{\alpha-1}{\beta}+9/4)}
\biggl(2{ \frac{\alpha-1}{\beta}}+1 \biggr)^{-1/2}
\\
&\leq& {e^{1/e} \alpha
^{1-1/\beta}} { \biggl(2{ \frac{\alpha-1}{\beta}}+1 \biggr)^{-1/2}},
\\
M'_{\alpha,\beta}&:=& \sqrt{2\pi} e^{-{1}/(6\sklvfrac{\alpha
-1}{\beta}+9/4)}{ \biggl({
\frac{\alpha-1}{\beta}}+1/2 \biggr)^{1/2} \biggl({ \frac{\alpha-1}{\beta}}+1
\biggr)^{1/\beta}} {\alpha^{-1} }
\\
& \leq&\sqrt{2\pi} { \biggl({ \frac{\alpha-1}{\beta}}+1/2 \biggr)^{1/2}
\biggl({ \frac{\alpha-1}{\beta}}+1 \biggr)^{1/\beta}} {\alpha^{-1}}.
\end{eqnarray*}
If $1<\beta<2$, then for all $0<b\leq1$,
\begin{eqnarray*}
&& \dk\bigl(\law(W), \law(Z) \bigr)
\\
&&\qquad \leq b \bigl(10 M_{\alpha,\beta}+4 \beta\IE W^{\beta-1} \bigr)+2 \beta
b^{\beta-1}M_{\alpha,\beta}'
\\
&&\quad\qquad{} +4 \bigl(2+(\beta+
\alpha-1)M_{\alpha,\beta}' \bigr)\IP\bigl[\bigl\llvert W-W^*\bigr
\rrvert>b\bigr].
\end{eqnarray*}
\end{theorem}

%
%
\begin{remark}
For a given $\alpha$ and $\beta$, the constants in the theorem may
be sharpened.
For example, the case $\alpha=\beta=1$ of the theorem is the
exponential approximation result~(2.5) of
Theorem~2.1 of \citet{Pekoz2011}, but here with larger constants.
These larger
constants come from three sources: first, below we bound some maximums
of nonnegative numbers by sums for the sake of simple formulas
(only if all but one of the terms in the maximum is positive
is there any hope of optimality in the constants). Second, $M_{\alpha,
\beta}$ and $M_{\alpha, \beta}'$
arise from bounds on the generalized gamma density, achieved by using
both sides of the inequalities in Theorems~\ref{thm6}
and~\ref{thm7} below. These inequalities are not optimal at the same
value for each side, so some precision
could be gained by using the appropriate exact bounds on the density
which in principle are recoverable from the
work below, but not particularly informative. Finally,
in special cases more information about the Stein solution may be
obtained. For example, in \citet{Pekoz2011} the term $\llvert
g(W)-g(W^*)\rrvert $ that appears in
the proof of Theorem~\ref{thm5} is there bounded by~$1$, whereas
following Lemma~\ref{lem16},
our general bound specializes to $2\llVert g\rrVert \leq4.3$.
\end{remark}

In the notation of Section~\ref{sec4}, for the generalized gamma distribution
we have $\varphi_{\alpha,\beta}(x) = C e^{-B(x)}$, $x>0$ with $a=0$
and $b=\infty$, and
\[
B(x) = x^\beta-(\alpha-1)\log x,\qquad C=\frac{\beta}{\Gamma
(\alpha/\beta)}.
\]
If $\alpha\geq1$ and $\beta\geq1$, then $B$ has nonnegative second
derivative and is thus convex. Since
\[
B'(x) = \beta x^{\beta-1} -\frac{(\alpha-1)}{x},
\]
$B$ has a unique
minimum at $x_0= (\frac{\alpha-1}{\beta} )^{1/\beta}$. Hence,
\[
B(x_0) = \psi\biggl(\frac{\alpha-1}{\beta} \biggr)\qquad\mbox{with }
\psi(x) = x - x\log(x), \psi(0)=0,
\]
and
%
%
\begin{equation}
C e^{-B(x_0)} = C e^{-\psi((\alpha-1)/\beta)}. \label{38}
\end{equation}
In order to apply Lemmas~\ref{lem10} and~\ref{lem11},
we need to bound~\eq{38}, for which
we use the following
two results about the gamma function.
%
%
\begin{theorem}[{[\citet{Batir2008}, Corollary 1.2]}]\label{thm6}
For all $x\geq0$,
\[
\sqrt{2} e^{4/9} \leq\frac{\Gamma(x+1)}{x^x e^{-x-{1}/{(6x+9/4)}}\sqrt
{x+1/2}} \leq\sqrt{2\pi}.
\]
\end{theorem}
%
%
\begin{theorem}[{[\citet{Wendel1948}, (7)]}]\label{thm7}
If $x>0$ and $0\leq s \leq1$, then
\begin{eqnarray*}
&&\biggl(\frac{x}{x+s} \biggr)^{1-s}\leq\frac{\Gamma(x+s)}{x^s\Gamma
(x)}\leq1.
\end{eqnarray*}
\end{theorem}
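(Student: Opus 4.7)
The plan is to prove both bounds from the integral representation $\Gamma(y) = \int_0^\infty t^{y-1} e^{-t}\,dt$ combined with Hölder's inequality; this is the classical Wendel argument.

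For the upper bound, I would write the integrand for $\Gamma(x+s)$ as a product of two pieces matched to the exponents $1/(1-s)$ and $1/s$, namely
\be{
  t^{x+s-1} e^{-t} = \bklr{t^{x-1}e^{-t}}^{1-s}\bklr{t^{x}e^{-t}}^{s}.
}
Applying Hölder's inequality with conjugate exponents $p = 1/(1-s)$ and $q = 1/s$ yields
\be{
  \Gamma(x+s) \leq \bkle{\Gamma(x)}^{1-s}\bkle{\Gamma(x+1)}^{s}
  = \bkle{\Gamma(x)}^{1-s}\bkle{x\Gamma(x)}^{s} = x^{s}\Gamma(x),
}
using the functional equation $\Gamma(x+1)=x\Gamma(x)$. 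Dividing by $x^s\Gamma(x)$ gives the right-hand inequality. The degenerate cases $s=0$ and $s=1$ (where Hölder is not literally needed) are immediate and should be noted separately.

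For the lower bound, the trick is to apply the upper bound just established, but with the arguments shifted. Replacing $x$ by $x+s$ and $s$ by $1-s$ (still in $[0,1]$), the upper bound gives
\be{
  \Gamma(x+1) = \Gamma\bklr{(x+s)+(1-s)} \leq (x+s)^{1-s}\,\Gamma(x+s).
}
Using $\Gamma(x+1) = x\Gamma(x)$ and rearranging,
\be{
  \frac{\Gamma(x+s)}{x^{s}\Gamma(x)} \;\geq\; \frac{x}{(x+s)^{1-s} x^{s}} \;=\; \bbbklr{\frac{x}{x+s}}^{1-s},
}
which is the left-hand inequality.

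The main obstacle is essentially cosmetic: verifying the boundary cases $s\in\{0,1\}$ separately (where both sides equal $1$), and checking that the Hölder step is valid, i.e.\ that the two integrals $\Gamma(x)$ and $\Gamma(x+1)$ are finite for $x>0$, which is standard. With these in hand the proof is a two-line application of Hölder followed by a self-substitution trick, and no further analytic input is required.
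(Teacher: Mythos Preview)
The paper does not prove this theorem; it is quoted from \cite{Wendel1948} as a known inequality and used as a black box in Lemmas~\ref{lem16} and~\ref{lem17}. Your argument is exactly Wendel's original proof: the upper bound via H\"older applied to the factorization $t^{x+s-1}e^{-t} = (t^{x-1}e^{-t})^{1-s}(t^{x}e^{-t})^{s}$, and the lower bound by feeding the upper bound back into itself with the substitution $(x,s)\mapsto(x+s,1-s)$. The computation is correct, the boundary cases $s\in\{0,1\}$ are trivially handled as you note, and nothing more is needed.
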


%
%
\begin{lemma}\label{lem12}
If $C$, $B$, and $x_0$ are as above for the generalized gamma
distribution and $\alpha\geq1, \beta\geq1$, then $C e^{-B(x_0)}\leq
M_{\alpha,\beta}$
\end{lemma}
\begin{pf}
Using Theorem~\ref{thm6} with $x=(\alpha-1)/\beta$ in the
inequality below implies
%
%
\begin{eqnarray}\label{39}
\qquad e^{-B(x_0)} &=& \biggl({ \frac{\alpha-1}{\beta}} \biggr)^{(\alpha
-1)/\beta
}e^{-\vfrac
{\alpha-1}{\beta}}
\nonumber\\[-8pt]\\[-8pt]\nonumber
&\leq&\Gamma\biggl({ \frac{\alpha-1}{\beta}} +1 \biggr
){e^{-4/9+{1}/{(6\sklvfrac{\alpha-1}{\beta}+9/4)}}} \biggl(2{
\frac{\alpha-1}{\beta}}+1 \biggr)^{-1/2}.
\end{eqnarray}
Since $C=\beta/\Gamma(\alpha/\beta)$, Theorem~\ref{thm7}
with $x=\alpha/\beta$ and $s=1-1/\beta$ yields
\[
C \Gamma\biggl({ \frac{\alpha-1}{\beta}} +1 \biggr)\leq\alpha
^{1-1/\beta
}
\beta^{1/\beta},
\]
and combining this with~\eq{39}, the lemma follows.
\end{pf}

We can also now prove the following lemma which is
used in applying Lemma~\ref{lem9}.

%
%
\begin{lemma}\label{lem13}
If $B$ and $x_0$ are as above for the generalized gamma distribution
and $\beta\geq1, \alpha\geq1$, then
$e^{B(x_0)}{\Gamma(\alpha/\beta)}/{\beta}\leq M'_{\alpha,\beta}$.
\end{lemma}
\begin{pf}
Using Theorem~\ref{thm6} with $x=(\alpha-1)/\beta$ in the following
inequality, we find
%
%
\begin{eqnarray}\label{40}
\qquad e^{B(x_0)}&=& \biggl({ \frac{\alpha-1}{\beta}} \biggr)^{-(\alpha
-1)/\beta
}e^{\vfrac{\alpha-1}{\beta}}
\nonumber\\[-8pt]\\[-8pt]\nonumber
&\leq& \sqrt{2\pi} { e^{-{1}/{(6\sklvfrac{\alpha-1}{\beta
}+9/4)}} \biggl({ \frac{\alpha-1}{\beta}}+1/2
\biggr)^{1/2} } \Gamma\biggl({ \frac{\alpha-1}{\beta}}+1
\biggr)^{-1}.
\end{eqnarray}
Now, Theorem~\ref{thm7} with $x=\alpha/\beta$ and $s=1-1/\beta$ yields
\begin{eqnarray*}
&&\frac{\Gamma(\alpha/\beta)}{\Gamma(\sklvfrac{\alpha-1}{\beta
}+1 )}\leq\frac{r}{\alpha} \biggl(\frac{\alpha-1}{\beta
}+1
\biggr)^{1/\beta},
\end{eqnarray*}
and combining this with~\eq{40}, the lemma follows.
\end{pf}

Before proving Theorem~\ref{thm5}, we collect properties of the Stein solution
for the generalized gamma distribution, which, according to~\eq{27}
and~\eq{28} satisfies
%
%
\begin{eqnarray}
\label{41} &\displaystyle f(x) := f_h(x) =x^{1-\alpha}e^{x^\beta}
\int_0^x \tilde h(z)z^{\alpha-1}
e^{-z^\beta}\,dz,&
\nonumber\\[-8pt]\\[-8pt]\nonumber
&\displaystyle f'(x)+ \biggl(\frac{\alpha-1}{x}-\beta x^{\beta-1}
\biggr)f(x)=\tilde h(x).&
\end{eqnarray}
First, we record a straightforward application of Lemmas~\ref{lem9}
and~\ref{lem13}.
%
%
\begin{lemma}\label{lem14}
If $f$ is given by~\eq{41}, then
\[
\llVert f\rrVert\leq\llVert\tilde h\rrVert M'_{\alpha,\beta},
\qquad\bigl\llVert f'\bigr\rrVert\leq2\llVert\tilde h\rrVert.
\]
\end{lemma}

%
%
\begin{lemma}\label{lem15}
If $f$ is given by~\eq{41}, $x>0$, $\llvert t\rrvert \leq
b\leq1$,
and $x+t>0$, then for $\beta=1$ and $\beta\geq2$,
\begin{eqnarray*}
&& \bigl\llvert(x+t)^{\beta-1}f(x+t) - x^{\beta-1}f(x)\bigr\rrvert
\\
&&\qquad\leq\llVert\tilde{h}\rrVert b \bigl[ (\beta-1) \bigl(1+2^{\beta
-2}
\bigl(x^{\beta-1}+ b^{\beta-1}\bigr)\bigr) M'_{\alpha,\beta}
+ 2 x^{\beta-1} \bigr]=:\llVert\tilde{h}\rrVert C_{b, \alpha, \beta}(x).
\end{eqnarray*}
For $1<\beta<2$, we have
\begin{eqnarray*}
&& \bigl\llvert(x+t)^{\beta-1}f(x+t) - x^{\beta-1}f(x)\bigr\rrvert
\\
&&\qquad \leq
\llVert\tilde{h}\rrVert\bigl( b^{\beta-1} M'_{\alpha,\beta}
+ 2 b x^{\beta-1} \bigr)=:\llVert\tilde{h}\rrVert C_{b, \alpha, \beta}(x).
\end{eqnarray*}
\end{lemma}
\begin{pf}
Observe that for all $\beta\geq1$,
%
%
\begin{eqnarray}\label{42}
&& \bigl\llvert(x+t)^{\beta-1}f(x+t) - x^{\beta-1}f(x)\bigr\rrvert
\nonumber
\\
&& \qquad\leq\bigl\llvert(x+t)^{\beta-1} - x^{\beta-1}\bigr\rrvert
\bigl\llvert f(x+t)\bigr\rrvert+ x^{\beta-1}\bigl\llvert f(x+t) -
f(x)\bigr
\rrvert
\\
&& \qquad\leq\bigl\llvert(x+t)^{\beta-1} - x^{\beta-1}\bigr\rrvert
\llVert f\rrVert+ b x^{\beta-1} \bigl\llVert f'\bigr\rrVert.\nonumber
\end{eqnarray}
In all cases, we use Lemma~\ref{lem14} to bound the norms appearing
in~\eq{42}. For
the remaining term, if $\beta=1$, then $\llvert(x+t)^{\beta-1} -
x^{\beta-1}\rrvert =0$ and the result follows.

If $\beta\geq2$, then the mean value theorem implies
\begin{eqnarray*}
&&\bigl\llvert(x+t)^{\beta-1} - x^{\beta-1}\bigr\rrvert\leq\llvert t
\rrvert(\beta-1) \bigl(x+\llvert t\rrvert\bigr)^{\beta
-2} \leq b (\beta-1)
(x+b)^{\beta-2}.
\end{eqnarray*}
Since $\beta\geq2$,
\begin{eqnarray*}
&&(x+b)^{\beta-2}\leq\max\bigl\{1,(x+b)^{\beta-1} \bigr\} \leq\max
\bigl\{1, 2^{\beta-2}\bigl(x^{\beta-1}+b^{\beta-1}\bigr) \bigr\},
\end{eqnarray*}
where the last inequality is H\"older's, and the result in this case
follows by bounding the maximum
by the sum.

For $1< \beta< 2$, since $x^{\beta-1}$ is concave and
increasing, $\llvert(x+t)^{\beta-1} - x^{\beta-1}\rrvert $
is maximized
when $x=0$ and $t=b$ in
which case it equals $b^{\beta-1}$.
\end{pf}

%
%
\begin{lemma}\label{lem16}
If $f$ is given by~\eq{41}, and we define
%
%
\begin{eqnarray}
g(x) &=& f'(x) + \frac{\alpha-1}{x} f(x),\qquad x>0, \label{43}
\end{eqnarray}
then
%
%
\begin{eqnarray}
g(x)&=& \tilde h(x) + \beta x^{\beta-1}f(x), \label{44}
\end{eqnarray}
and for $\beta\geq1$,
\[
\llVert g\rrVert\leq\llVert\tilde{h}\rrVert\max\bigl\{2+(\alpha
-1)M'_{\alpha,\beta}, 1+\beta M'_{\alpha,\beta}
\bigr\} \leq\llVert\tilde h\rrVert\bigl(2+(\beta+\alpha-1)M'_{\alpha
,\beta}
\bigr).
\]
\end{lemma}
\begin{pf}
The fact that~\eq{43} equals~\eq{44} is a simple rearrangement of the
second equality of~\eq{41}.

For the bounds, if $x\geq1$, then~\eq{43} implies
\begin{eqnarray*}
&&\bigl\llvert g(x)\bigr\rrvert\leq\bigl\llVert f'\bigr\rrVert+(
\alpha-1)\llVert f\rrVert,
\end{eqnarray*}
and if $x\leq1$, then~\eq{44} implies
\begin{eqnarray*}
&&\bigl\llvert g(x)\bigr\rrvert\leq\llVert\tilde h\rrVert+\beta\llVert
f\rrVert
,
\end{eqnarray*}
so that
\begin{eqnarray*}
\llVert g\rrVert&\leq&\max\bigl\{ \bigl\llVert f'\bigr\rrVert+(
\alpha-1)\llVert f\rrVert,\llVert\tilde{h}\rrVert+\beta\llVert
f\rrVert\bigr
\},
\end{eqnarray*}
and the result follows from Lemma~\ref{lem14}.
\end{pf}

The purpose of introducing $g$ in Lemma~\ref{lem16} is
illustrated in the following lemma.
%
%
\begin{lemma}\label{lem17}
If $f$ is a bounded function on $[0,\infty)$ with bounded derivative
such that $f(0)=0$, $W\geq0$ is a random variable
with $\IE W^\beta= \alpha/\beta$, and $W^*$ has the $(\alpha
,\beta)$-generalized equilibrium distribution of $W$ as in
Definition~\ref{def1}, then
for $g(x)=f'(x)+(\alpha-1) x^{-1} f(x)$,
\[
\IE g\bigl(W^*\bigr)=\beta\IE W^{\beta-1} f(W).
\]
\end{lemma}
\begin{pf}
If $V_\alpha\sim\B(\alpha,1)$ is independent of $W^{(\beta)} $
having the $\beta$-power bias distribution of $W$, then we can set
$W^*=V_\alpha W^{(\beta)} $ and
%
%
\begin{eqnarray}\label{45}
\IE f'\bigl(W^*\bigr) &=&\IE f'\bigl(V_\alpha
W^{(\beta)}\bigr) =\frac{\beta}{\alpha}\IE W^\beta
f'(V_\alpha W)
\nonumber\\[-8pt]\\[-8pt]\nonumber
&=& \beta\IE W^{\beta
} \int_0^1 u^{\alpha-1}f'(uW) \,du.
\end{eqnarray}
The case $\alpha=1$ easily follows from performing the integration
in~\eq{45}, keeping in mind that $f(0)=0$.
If $\alpha>1$, similar to the computation of~\eq{45},
%
%
\begin{equation}
(\alpha-1) \IE\frac{f(W^*)}{W^*}=\beta(\alpha-1) \IE W^{\beta-1} \int
_0^1 u^{\alpha-2}f(uW) \,du. \label{46}
\end{equation}
Applying integration by parts in~\eq{46} and noting $f(0)=0$ yields
%
%
\begin{equation}
\qquad (\alpha-1) \IE\frac{f(W^*)}{W^*}=\beta\IE\biggl\{W^{\beta
-1} \biggl( f(W)
- W\int_0^1 u^{\alpha-1}
f'(uW) \,du \biggr) \biggr\},\label{47}
\end{equation}
and adding the right-hand sides of~\eq{45} and~\eq{47} yields the lemma.
\end{pf}

We are now in a position to prove our generalized gamma approximation result.

\begin{pf*}{Proof of Theorem~\ref{thm5}}
Let $\delta=
\dk(\law(W),\law(Z) )$ and let $h_{s,\eps}$ be the
smoothed indicators
defined at~\eq{haha} in Lemma~\ref{lem10}. From Lemmas~\ref{lem10}
and~\ref{lem12}, we have
for every $\eps>0$,
%
%
\begin{equation}
\delta\leq\sup_{s>0}\bigl\llvert\IE h_{s,\eps}(W)-\IE
h_{s,\eps}(Z)\bigr\rrvert+ M_{\alpha,\beta}\eps. \label{48}
\end{equation}
Fix $\eps$ and $s$, let $f$ solve the Stein equation given explicitly
by~\eq{41} with $h:=h_{s,\eps}$
and let $g$ be as in Lemma~\ref{lem16}. By Lemma~\ref{lem17},
\begin{eqnarray*}
\IE h(W) - \IE h(Z) &=&\IE\bigl\{f'(W)-B'(W)f(W)
\bigr\}
\\
&=&\IE\biggl\{f'(W)- \biggl(\beta W^{\beta-1}-
\frac{\alpha
-1}{W} \biggr)f(W) \biggr\}
\\
&=& \IE\bigl\{g(W) - \beta W^{\beta-1} f(W) \bigr\} = \IE\bigl\{g(W) -g
\bigl(W^*\bigr) \bigr\}.
\end{eqnarray*}
And we want to bound this last term since in absolute value it is equal
to the first part of the bound in~\eq{48}.
With $I_1 = \I[\llvert W-W^*\rrvert \leq b]$,
%
%
\begin{eqnarray}\label{49}
&& \bigl\llvert\IE\bigl\{g(W) -g\bigl(W^*\bigr) \bigr\} \bigr\rrvert
\nonumber\\[-8pt]\\[-8pt]\nonumber
&&\qquad \leq
2\llVert
g\rrVert\IP\bigl[\bigl\llvert W-W^*\bigr\rrvert>b\bigr] + \bigl
\llvert\IE
\bigl\{I_1 \bigl(g(W) -g\bigl(W^*\bigr) \bigr) \bigr\} \bigr\rrvert.
\end{eqnarray}
Note from the representation~\eq{44} of $g$, if $x>0$, $\llvert
t\rrvert \leq
b\leq1$, and $x+t>0$,
\begin{eqnarray*}
&&g(x+t) - g(x) = h(x+t)-h(x) + \beta\bigl((x+t)^{\beta-1}f(x+t) -
x^{\beta-1}f(x) \bigr)
\end{eqnarray*}
and since $\llvert h(x+t)-h(x)\rrvert \leq\eps^{-1} \int_{t \wedge
0}^{t\vee0} \I[s<x+u\leq s+\eps] \,du$,
we apply Lemma~\ref{lem15} to find
%
%
\begin{eqnarray}\label{50}
&& \bigl\llvert\IE\bigl\{I_1 \bigl(g(W) -g\bigl(W^*\bigr) \bigr)
\bigr\} \bigr\rrvert
\nonumber\\[-8pt]\\[-8pt]\nonumber
&&\qquad \leq\frac{1}{\eps}\sup_{s\geq0}\int
_{0}^b\IP[s < W + u \leq s + \eps] \,du +
C_{b,\alpha, \beta},
\end{eqnarray}
where $C_{b, \alpha, \beta}:=\IE C_{b, \alpha, \beta}(W)$
and $C_{b, \alpha, \beta}(x)$ is defined in Lemma~\ref{lem15}; and
observe that for $1<\beta<2$,
$ C_{b,\alpha, \beta}$ is bounded since $\IE W^{\beta-1} \leq(\IE
{W^\beta})^{(\beta-1)/\beta}$.

Now using Lemmas~\ref{lem11} and~\ref{lem12} to find
\[
\IP[s < W + u \leq s + \eps]\leq M_{\alpha,\beta}\eps+2\delta
\]
and combining~\eq{48},~\eq{49} and~\eq{50}, we have
\[
\delta\leq M_{\alpha,\beta}\eps+ 2\llVert g\rrVert\IP\bigl[\bigl
\llvert W-W^*
\bigr\rrvert> b\bigr] + C_{b,\alpha,
\beta}+b M_{\alpha,\beta}+2
\eps^{-1}b \delta.
\]
Applying Lemma~\ref{lem16} to bound $\llVert g\rrVert $,
setting $\eps= 4b$,
and solving for $\delta$ now yields the bounds of the theorem.
\end{pf*}

\section*{Acknowledgments}
A portion of the work for this project was completed when Nathan Ross  was at
University of California, Berkeley.
Erol A. Pek\"oz would like to thank the Department of Statistics and Applied
Probability, National University of Singapore, for its hospitality. We
also thank Shaun McKinlay for pointing us to the reference~\citet
{Pakes1992}, Larry Goldstein for the suggestion to study Rayleigh
limits in bridge random walks, Louigi Addario-Berry for helpful
discussions, Jim Pitman for detailed pointers to the literature
connecting trees, walks and urns, and the Associate Editor and referee
for their many detailed and useful comments which have
greatly improved this work.




%

\printaddresses

\begin{thebibliography}{60}

\bibitem[\protect\citeauthoryear{Aldous}{1991}]{Aldous1991}
%
\begin{barticle}[mr]
\bauthor{\bsnm{Aldous},~\bfnm{David}\binits{D.}}
(\byear{1991}).
\btitle{The continuum random tree. {I}}.
\bjournal{Ann. Probab.}
\bvolume{19}
\bpages{1--28}.
\bid{issn={0091-1798}, mr={1085326}}
\end{barticle}
%
\bptok{imsref}%
\endbibitem

\bibitem[\protect\citeauthoryear{Aldous}{1993}]{Aldous1993}
%
\begin{barticle}[mr]
\bauthor{\bsnm{Aldous},~\bfnm{David}\binits{D.}}
(\byear{1993}).
\btitle{The continuum random tree. {III}}.
\bjournal{Ann. Probab.}
\bvolume{21}
\bpages{248--289}.
\bid{issn={0091-1798}, mr={1207226}}
\end{barticle}
%
\bptok{imsref}%
\endbibitem

\bibitem[\protect\citeauthoryear{Arratia, Goldstein and
Kochman}{2013}]{Arratia2013}
%
\begin{bmisc}[author]
\bauthor{\bsnm{Arratia},~\bfnm{R.}\binits{R.}},
\bauthor{\bsnm{Goldstein},~\bfnm{L.}\binits{L.}} \AND
\bauthor{\bsnm{Kochman},~\bfnm{F.}\binits{F.}}
(\byear{2013}).
\bhowpublished{Size bias for one and all.
Preprint. Available at \arxivurl{arXiv:1308.2729}.}
\end{bmisc}
%
\bptok{imsref}%
\endbibitem

\bibitem[\protect\citeauthoryear{Athreya and Karlin}{1968}]{Athreya1968}
%
\begin{barticle}[mr]
\bauthor{\bsnm{Athreya},~\bfnm{Krishna~B.}\binits{K.~B.}} \AND
\bauthor{\bsnm{Karlin},~\bfnm{Samuel}\binits{S.}}
(\byear{1968}).
\btitle{Embedding of urn schemes into continuous time {M}arkov
branching processes and related limit theorems}.
\bjournal{Ann. Math. Statist.}
\bvolume{39}
\bpages{1801--1817}.
\bid{issn={0003-4851}, mr={0232455}}
\end{barticle}
%
\bptok{imsref}%
\endbibitem

\bibitem[\protect\citeauthoryear{Bai, Hu and Zhang}{2002}]{Bai2002}
%
\begin{barticle}[mr]
\bauthor{\bsnm{Bai},~\bfnm{Z.~D.}\binits{Z.~D.}},
\bauthor{\bsnm{Hu},~\bfnm{Feifang}\binits{F.}} \AND
\bauthor{\bsnm{Zhang},~\bfnm{Li-Xin}\binits{L.-X.}}
(\byear{2002}).
\btitle{Gaussian approximation theorems for urn models and their applications}.
\bjournal{Ann. Appl. Probab.}
\bvolume{12}
\bpages{1149--1173}.
\bid{doi={10.1214/aoap/1037125857}, issn={1050-5164}, mr={1936587}}
\end{barticle}
%
\bptok{imsref}%
\endbibitem

\bibitem[\protect\citeauthoryear{Barab{\'a}si and
Albert}{1999}]{Barabasi1999}
%
\begin{barticle}[mr]
\bauthor{\bsnm{Barab{\'a}si},~\bfnm{Albert-L{\'a}szl{\'o}}\binits
{A.-L.}} \AND
\bauthor{\bsnm{Albert},~\bfnm{R{\'e}ka}\binits{R.}}
(\byear{1999}).
\btitle{Emergence of scaling in random networks}.
\bjournal{Science}
\bvolume{286}
\bpages{509--512}.
\bid{doi={10.1126/science.286.5439.509}, issn={0036-8075}, mr={2091634}}
\end{barticle}
%
\bptok{imsref}%
\endbibitem

\bibitem[\protect\citeauthoryear{Barbour, Holst and
Janson}{1992}]{Barbour1992}
%
\begin{bbook}[mr]
\bauthor{\bsnm{Barbour},~\bfnm{A.~D.}\binits{A.~D.}},
\bauthor{\bsnm{Holst},~\bfnm{Lars}\binits{L.}} \AND
\bauthor{\bsnm{Janson},~\bfnm{Svante}\binits{S.}}
(\byear{1992}).
\btitle{Poisson Approximation}.
\bseries{Oxford Studies in Probability}
\bvolume{2}.
\bpublisher{Oxford Univ. Press},
\blocation{New York}.
\bid{mr={1163825}}
\end{bbook}
%
\bptok{imsref}%
\endbibitem

\bibitem[\protect\citeauthoryear{Batir}{2008}]{Batir2008}
%
\begin{barticle}[mr]
\bauthor{\bsnm{Batir},~\bfnm{Necdet}\binits{N.}}
(\byear{2008}).
\btitle{Inequalities for the gamma function}.
\bjournal{Arch. Math. (Basel)}
\bvolume{91}
\bpages{554--563}.
\bid{doi={10.1007/s00013-008-2856-9}, issn={0003-889X}, mr={2465874}}
\end{barticle}
%
\bptok{imsref}%
\endbibitem

\bibitem[\protect\citeauthoryear{Bolthausen}{1982}]{Bolthausen1982a}
%
\begin{barticle}[mr]
\bauthor{\bsnm{Bolthausen},~\bfnm{E.}\binits{E.}}
(\byear{1982}).
\btitle{Exact convergence rates in some martingale central limit theorems}.
\bjournal{Ann. Probab.}
\bvolume{10}
\bpages{672--688}.
\bid{issn={0091-1798}, mr={0659537}}
\end{barticle}
%
\bptok{imsref}%
\endbibitem

\bibitem[\protect\citeauthoryear{Borodin}{1987}]{Borodin1987}
%
\begin{barticle}[mr]
\bauthor{\bsnm{Borodin},~\bfnm{A.~N.}\binits{A.~N.}}
(\byear{1987}).
\btitle{On the distribution of random walk local time}.
\bjournal{Ann. Inst. Henri Poincar\'e Probab. Stat.}
\bvolume{23}
\bpages{63--89}.
\bid{issn={0246-0203}, mr={0877385}}
\end{barticle}
%
\bptok{imsref}%
\endbibitem

\bibitem[\protect\citeauthoryear{Borodin}{1989}]{Borodin1989}
%
\begin{barticle}[mr]
\bauthor{\bsnm{Borodin},~\bfnm{A.~N.}\binits{A.~N.}}
(\byear{1989}).
\btitle{Brownian local time}.
\bjournal{Uspekhi Mat. Nauk}
\bvolume{44}
\bpages{7--48}.
\bid{doi={10.1070/RM1989v044n02ABEH002050}, issn={0042-1316}, mr={0998360}}
\end{barticle}
%
\bptok{imsref}%
\endbibitem

\bibitem[\protect\citeauthoryear{Brown}{2006}]{Brown2006}
%
\begin{barticle}[mr]
\bauthor{\bsnm{Brown},~\bfnm{Mark}\binits{M.}}
(\byear{2006}).
\btitle{Exploiting the waiting time paradox: Applications of the
size-biasing transformation}.
\bjournal{Probab. Engrg. Inform. Sci.}
\bvolume{20}
\bpages{195--230}.
\bid{doi={10.1017/S026996480606013X}, issn={0269-9648}, mr={2261286}}
\end{barticle}
%
\bptok{imsref}%
\endbibitem

\bibitem[\protect\citeauthoryear{Chatterjee and Shao}{2011}]{Chatterjee2011a}
%
\begin{barticle}[mr]
\bauthor{\bsnm{Chatterjee},~\bfnm{Sourav}\binits{S.}} \AND
\bauthor{\bsnm{Shao},~\bfnm{Qi-Man}\binits{Q.-M.}}
(\byear{2011}).
\btitle{Nonnormal approximation by {S}tein's method of exchangeable
pairs with application to the {C}urie--{W}eiss model}.
\bjournal{Ann. Appl. Probab.}
\bvolume{21}
\bpages{464--483}.
\bid{doi={10.1214/10-AAP712}, issn={1050-5164}, mr={2807964}}
\end{barticle}
%
\bptok{imsref}%
\endbibitem

\bibitem[\protect\citeauthoryear{Chen, Goldstein and Shao}{2011}]{Chen2011}
%
\begin{bbook}[mr]
\bauthor{\bsnm{Chen},~\bfnm{Louis~H.~Y.}\binits{L.~H.~Y.}},
\bauthor{\bsnm{Goldstein},~\bfnm{Larry}\binits{L.}} \AND
\bauthor{\bsnm{Shao},~\bfnm{Qi-Man}\binits{Q.-M.}}
(\byear{2011}).
\btitle{Normal Approximation by {S}tein's Method}.
\bpublisher{Springer},
\blocation{Heidelberg}.
\bid{doi={10.1007/978-3-642-15007-4}, mr={2732624}}
\end{bbook}
%
\bptok{imsref}%
\endbibitem

\bibitem[\protect\citeauthoryear{Chen and Shao}{2005}]{Chen2005}
%
\begin{bincollection}[mr]
\bauthor{\bsnm{Chen},~\bfnm{Louis~H.~Y.}\binits{L.~H.~Y.}} \AND
\bauthor{\bsnm{Shao},~\bfnm{Qi-Man}\binits{Q.-M.}}
(\byear{2005}).
\btitle{Stein's method for normal approximation}.
In \bbooktitle{An Introduction to {S}tein's Method}.
\bseries{Lect. Notes Ser. Inst. Math. Sci. Natl. Univ. Singap.}
\bvolume{4}
\bpages{1--59}.
\bpublisher{Singapore Univ. Press},
\blocation{Singapore}.
\bid{doi={10.1142/9789812567680_0001}, mr={2235448}}
\end{bincollection}
%
\bptok{imsref}%
\endbibitem

\bibitem[\protect\citeauthoryear{Chung}{1976}]{Chung1976}
%
\begin{barticle}[mr]
\bauthor{\bsnm{Chung},~\bfnm{Kai~Lai}\binits{K.~L.}}
(\byear{1976}).
\btitle{Excursions in {B}rownian motion}.
\bjournal{Ark. Mat.}
\bvolume{14}
\bpages{155--177}.
\bid{issn={0004-2080}, mr={0467948}}
\end{barticle}
%
\bptok{imsref}%
\endbibitem

\bibitem[\protect\citeauthoryear{Chung and Hunt}{1949}]{Chung1949}
%
\begin{barticle}[mr]
\bauthor{\bsnm{Chung},~\bfnm{K.~L.}\binits{K.~L.}} \AND
\bauthor{\bsnm{Hunt},~\bfnm{G.~A.}\binits{G.~A.}}
(\byear{1949}).
\btitle{On the zeros of {$\sum\sp n\sb1\pm1$}}.
\bjournal{Ann. of Math. (2)}
\bvolume{50}
\bpages{385--400}.
\bid{issn={0003-486X}, mr={0029488}}
\end{barticle}
%
\bptok{imsref}%
\endbibitem

\bibitem[\protect\citeauthoryear{Cs{\'a}ki and Mohanty}{1981}]{Csaki1981}
%
\begin{barticle}[mr]
\bauthor{\bsnm{Cs{\'a}ki},~\bfnm{E.}\binits{E.}} \AND
\bauthor{\bsnm{Mohanty},~\bfnm{S.~G.}\binits{S.~G.}}
(\byear{1981}).
\btitle{Excursion and meander in random walk}.
\bjournal{Canad. J. Statist.}
\bvolume{9}
\bpages{57--70}.
\bid{doi={10.2307/3315296}, issn={0319-5724}, mr={0638386}}
\end{barticle}
%
\bptok{imsref}%
\endbibitem

\bibitem[\protect\citeauthoryear{D{\"o}bler}{2012}]{Dobler2012}
%
\begin{bmisc}[author]
\bauthor{\bsnm{D\"obler},~\bfnm{C.}\binits{C.}}
(\byear{2012}).
\bhowpublished{Stein's method of exchangeable pairs for absolutely continuous,
univariate distributions with applications to the polya urn model. Preprint}.
\end{bmisc}
%
\bptok{imsref}%
\endbibitem

\bibitem[\protect\citeauthoryear{D{\"o}bler}{2013}]{Dobler2013}
%
\begin{bmisc}[author]
\bauthor{\bsnm{D\"obler},~\bfnm{C.}\binits{C.}}
(\byear{2012}).
\bhowpublished{Stein's method for the half-normal distribution with
applications to
limit theorems related to simple random walk. Preprint.
Available at \arxivurl{arXiv:1303.4592}.}
\end{bmisc}
%
\bptok{imsref}%
\endbibitem

\bibitem[\protect\citeauthoryear{Dudley}{1968}]{Dudley1968}
%
\begin{barticle}[mr]
\bauthor{\bsnm{Dudley},~\bfnm{R.~M.}\binits{R.~M.}}
(\byear{1968}).
\btitle{Distances of probability measures and random variables}.
\bjournal{Ann. Math. Statist.}
\bvolume{39}
\bpages{1563--1572}.
\bid{issn={0003-4851}, mr={0230338}}
\end{barticle}
%
\bptok{imsref}%
\endbibitem

\bibitem[\protect\citeauthoryear{Durrett and Iglehart}{1977}]{Durrett1977}
%
\begin{barticle}[mr]
\bauthor{\bsnm{Durrett},~\bfnm{Richard~T.}\binits{R.~T.}} \AND
\bauthor{\bsnm{Iglehart},~\bfnm{Donald~L.}\binits{D.~L.}}
(\byear{1977}).
\btitle{Functionals of {B}rownian meander and {B}rownian excursion}.
\bjournal{Ann. Probab.}
\bvolume{5}
\bpages{130--135}.
\bid{mr={0436354}}
\end{barticle}
%
\bptok{imsref}%
\endbibitem

\bibitem[\protect\citeauthoryear{Eggenberger and P{{\'
o}}lya}{1923}]{Eggenberger1923}
%
\begin{barticle}[author]
\bauthor{\bsnm{Eggenberger},~\bfnm{F.}\binits{F.}} \AND
\bauthor{\bsnm{P{{\'o}}lya},~\bfnm{G.}\binits{G.}}
(\byear{1923}).
\btitle{\"Uber die {S}tatistik verketteter {V}org\"ange}.
\bjournal{Z. angew. Math Mech.}
\bvolume{3}
\bpages{279--289}.
\end{barticle}
%
\bptok{imsref}%
\endbibitem

\bibitem[\protect\citeauthoryear{Feller}{1968}]{Feller1968}
%
\begin{bbook}[mr]
\bauthor{\bsnm{Feller},~\bfnm{William}\binits{W.}}
(\byear{1968}).
\btitle{An Introduction to Probability Theory and Its Applications.
{V}ol. {I}},
\bedition{3rd} ed.
\bpublisher{Wiley},
\blocation{New York}.
\bid{mr={0228020}}
\end{bbook}
%
\bptok{imsref}%
\endbibitem

\bibitem[\protect\citeauthoryear{Flajolet, Gabarr{\'o} and
Pekari}{2005}]{Flajolet2005}
%
\begin{barticle}[mr]
\bauthor{\bsnm{Flajolet},~\bfnm{Philippe}\binits{P.}},
\bauthor{\bsnm{Gabarr{\'o}},~\bfnm{Joaquim}\binits{J.}} \AND
\bauthor{\bsnm{Pekari},~\bfnm{Helmut}\binits{H.}}
(\byear{2005}).
\btitle{Analytic urns}.
\bjournal{Ann. Probab.}
\bvolume{33}
\bpages{1200--1233}.
\bid{doi={10.1214/009117905000000026}, issn={0091-1798}, mr={2135318}}
\end{barticle}
%
\bptok{imsref}%
\endbibitem

\bibitem[\protect\citeauthoryear{Freedman}{1965}]{Freedman1965}
%
\begin{barticle}[mr]
\bauthor{\bsnm{Freedman},~\bfnm{David~A.}\binits{D.~A.}}
(\byear{1965}).
\btitle{Bernard {F}riedman's urn}.
\bjournal{Ann. Math. Statist}
\bvolume{36}
\bpages{956--970}.
\bid{issn={0003-4851}, mr={0177432}}
\end{barticle}
%
\bptok{imsref}%
\endbibitem

\bibitem[\protect\citeauthoryear{Friedman}{1949}]{Friedman1949}
%
\begin{barticle}[mr]
\bauthor{\bsnm{Friedman},~\bfnm{Bernard}\binits{B.}}
(\byear{1949}).
\btitle{A simple urn model}.
\bjournal{Comm. Pure Appl. Math.}
\bvolume{2}
\bpages{59--70}.
\bid{issn={0010-3640}, mr={0030144}}
\end{barticle}
%
\bptok{imsref}%
\endbibitem

\bibitem[\protect\citeauthoryear{Goldstein and Reinert}{1997}]{Goldstein1997}
%
\begin{barticle}[mr]
\bauthor{\bsnm{Goldstein},~\bfnm{Larry}\binits{L.}} \AND
\bauthor{\bsnm{Reinert},~\bfnm{Gesine}\binits{G.}}
(\byear{1997}).
\btitle{Stein's method and the zero bias transformation with
application to simple random sampling}.
\bjournal{Ann. Appl. Probab.}
\bvolume{7}
\bpages{935--952}.
\bid{doi={10.1214/aoap/1043862419}, issn={1050-5164}, mr={1484792}}
\end{barticle}
%
\bptok{imsref}%
\endbibitem

\bibitem[\protect\citeauthoryear{Goldstein and
Reinert}{2005}]{Goldstein2005a}
%
\begin{bincollection}[mr]
\bauthor{\bsnm{Goldstein},~\bfnm{Larry}\binits{L.}} \AND
\bauthor{\bsnm{Reinert},~\bfnm{Gesine}\binits{G.}}
(\byear{2005}).
\btitle{Zero biasing in one and higher dimensions, and applications}.
In \bbooktitle{Stein's Method and Applications}.
\bseries{Lect. Notes Ser. Inst. Math. Sci. Natl. Univ. Singap.}
\bvolume{5}
\bpages{1--18}.
\bpublisher{Singapore Univ. Press},
\blocation{Singapore}.
\bid{doi={10.1142/9789812567673_0001}, mr={2201883}}
\end{bincollection}
%
\bptok{imsref}%
\endbibitem

\bibitem[\protect\citeauthoryear{Goldstein and Xia}{2006}]{Goldstein2006}
%
\begin{barticle}[mr]
\bauthor{\bsnm{Goldstein},~\bfnm{Larry}\binits{L.}} \AND
\bauthor{\bsnm{Xia},~\bfnm{Aihua}\binits{A.}}
(\byear{2006}).
\btitle{Zero biasing and a discrete central limit theorem}.
\bjournal{Ann. Probab.}
\bvolume{34}
\bpages{1782--1806}.
\bid{doi={10.1214/009117906000000250}, issn={0091-1798}, mr={2271482}}
\end{barticle}
%
\bptok{imsref}%
\endbibitem

\bibitem[\protect\citeauthoryear{Janson}{2006a}]{Janson2006}
%
\begin{barticle}[mr]
\bauthor{\bsnm{Janson},~\bfnm{Svante}\binits{S.}}
(\byear{2006}a).
\btitle{Limit theorems for triangular urn schemes}.
\bjournal{Probab. Theory Related Fields}
\bvolume{134}
\bpages{417--452}.
\bid{doi={10.1007/s00440-005-0442-7}, issn={0178-8051}, mr={2226887}}
\end{barticle}
%
\bptok{imsref}%
\endbibitem

\bibitem[\protect\citeauthoryear{Janson}{2006b}]{Janson2006a}
%
\begin{barticle}[mr]
\bauthor{\bsnm{Janson},~\bfnm{Svante}\binits{S.}}
(\byear{2006}b).
\btitle{Random cutting and records in deterministic and random trees}.
\bjournal{Random Structures Algorithms}
\bvolume{29}
\bpages{139--179}.
\bid{doi={10.1002/rsa.20086}, issn={1042-9832}, mr={2245498}}
\end{barticle}
%
\bptok{imsref}%
\endbibitem

\bibitem[\protect\citeauthoryear{Janson}{2006c}]{Janson2006b}
%
\begin{bmisc}[auto]
\bauthor{\bsnm{Janson},~\bfnm{Svante}\binits{S.}}
(\byear{2006}c).
\bhowpublished{Conditioned Galton--Watson trees do not grow.
Technical report, Fourth Colloquium on Mathematics and Computer
Science Algorithms, Trees Combinatorics and Probability.}
\end{bmisc}
%
\bptok{imsref}%
\endbibitem

\bibitem[\protect\citeauthoryear{Janson}{2012}]{janson2012}
%
\begin{barticle}[mr]
\bauthor{\bsnm{Janson},~\bfnm{Svante}\binits{S.}}
(\byear{2012}).
\btitle{Simply generated trees, conditioned {G}alton--{W}atson trees,
random allocations and condensation}.
\bjournal{Probab. Surv.}
\bvolume{9}
\bpages{103--252}.
\bid{doi={10.1214/11-PS188}, issn={1549-5787}, mr={2908619}}
\end{barticle}
%
\bptok{imsref}%
\endbibitem

\bibitem[\protect\citeauthoryear{Luk}{1994}]{Luk1994}
%
\begin{bmisc}[mr]
\bauthor{\bsnm{Luk},~\bfnm{Ho~Ming}\binits{H.~M.}}
(\byear{1994}).
\bhowpublished{Stein's method for the {G}amma distribution and related
statistical applications.
Ph.D. thesis, Univ. Southern California.}
\bid{mr={2693204}}
\end{bmisc}
%
\bptok{imsref}%
\endbibitem

\bibitem[\protect\citeauthoryear{Marchal}{2003}]{Marchal2003}
%
\begin{bincollection}[mr]
\bauthor{\bsnm{Marchal},~\bfnm{Philippe}\binits{P.}}
(\byear{2003}).
\btitle{Constructing a sequence of random walks strongly converging to
{B}rownian motion}.
In \bbooktitle{Discrete Random Walks ({P}aris, 2003)}.
\bseries{Discrete Math. Theor. Comput. Sci. Proc., AC}
\bpages{181--190 (electronic)}.
\bpublisher{Assoc. Discrete Math. Theor. Comput. Sci.},
\blocation{Nancy}.
\bid{mr={2042386}}
\end{bincollection}
%
\bptok{imsref}%
\endbibitem

\bibitem[\protect\citeauthoryear{Meir and Moon}{1978}]{Meir1978}
%
\begin{barticle}[mr]
\bauthor{\bsnm{Meir},~\bfnm{A.}\binits{A.}} \AND
\bauthor{\bsnm{Moon},~\bfnm{J.~W.}\binits{J.~W.}}
(\byear{1978}).
\btitle{On the altitude of nodes in random trees}.
\bjournal{Canad. J. Math.}
\bvolume{30}
\bpages{997--1015}.
\bid{doi={10.4153/CJM-1978-085-0}, issn={0008-414X}, mr={0506256}}
\end{barticle}
%
\bptok{imsref}%
\endbibitem

\bibitem[\protect\citeauthoryear{Nourdin and Peccati}{2009}]{Nourdin2009}
%
\begin{barticle}[mr]
\bauthor{\bsnm{Nourdin},~\bfnm{Ivan}\binits{I.}} \AND
\bauthor{\bsnm{Peccati},~\bfnm{Giovanni}\binits{G.}}
(\byear{2009}).
\btitle{Stein's method on {W}iener chaos}.
\bjournal{Probab. Theory Related Fields}
\bvolume{145}
\bpages{75--118}.
\bid{doi={10.1007/s00440-008-0162-x}, issn={0178-8051}, mr={2520122}}
\end{barticle}
%
\bptok{imsref}%
\endbibitem

\bibitem[\protect\citeauthoryear{Pakes and Khattree}{1992}]{Pakes1992}
%
\begin{barticle}[mr]
\bauthor{\bsnm{Pakes},~\bfnm{Anthony~G.}\binits{A.~G.}} \AND
\bauthor{\bsnm{Khattree},~\bfnm{Ravindra}\binits{R.}}
(\byear{1992}).
\btitle{Length-biasing, characterizations of laws and the moment problem}.
\bjournal{Austral. J. Statist.}
\bvolume{34}
\bpages{307--322}.
\bid{issn={0004-9581}, mr={1193781}}
\end{barticle}
%
\bptok{imsref}%
\endbibitem

\bibitem[\protect\citeauthoryear{Panholzer}{2004}]{Panholzer2004}
%
\begin{barticle}[mr]
\bauthor{\bsnm{Panholzer},~\bfnm{Alois}\binits{A.}}
(\byear{2004}).
\btitle{The distribution of the size of the ancestor-tree and of the
induced spanning subtree for random trees}.
\bjournal{Random Structures Algorithms}
\bvolume{25}
\bpages{179--207}.
\bid{doi={10.1002/rsa.20027}, issn={1042-9832}, mr={2076338}}
\end{barticle}
%
\bptok{imsref}%
\endbibitem

\bibitem[\protect\citeauthoryear{Pek{\"o}z and R{\"o}llin}{2011}]{Pekoz2011}
%
\begin{barticle}[mr]
\bauthor{\bsnm{Pek{\"o}z},~\bfnm{Erol~A.}\binits{E.~A.}} \AND
\bauthor{\bsnm{R{\"o}llin},~\bfnm{Adrian}\binits{A.}}
(\byear{2011}).
\btitle{New rates for exponential approximation and the theorems of R\'
enyi and {Y}aglom}.
\bjournal{Ann. Probab.}
\bvolume{39}
\bpages{587--608}.
\bid{doi={10.1214/10-AOP559}, issn={0091-1798}, mr={2789507}}
\end{barticle}
%
\bptok{imsref}%
\endbibitem

\bibitem[\protect\citeauthoryear{Pek{\"o}z, R{\"o}llin and
Ross}{2013a}]{Pekoz2013}
%
\begin{barticle}[mr]
\bauthor{\bsnm{Pek{\"o}z},~\bfnm{Erol~A.}\binits{E.~A.}},
\bauthor{\bsnm{R{\"o}llin},~\bfnm{Adrian}\binits{A.}} \AND
\bauthor{\bsnm{Ross},~\bfnm{Nathan}\binits{N.}}
(\byear{2013}a).
\btitle{Degree asymptotics with rates for preferential attachment
random graphs}.
\bjournal{Ann. Appl. Probab.}
\bvolume{23}
\bpages{1188--1218}.
\bid{doi={10.1214/12-AAP868}, issn={1050-5164}, mr={3076682}}
\end{barticle}
%
\bptok{imsref}%
\endbibitem

\bibitem[\protect\citeauthoryear{Pek{\"o}z, R{\"o}llin and
Ross}{2013b}]{Pekoz2013a}
%
\begin{barticle}[mr]
\bauthor{\bsnm{Pek{\"o}z},~\bfnm{Erol~A.}\binits{E.~A.}},
\bauthor{\bsnm{R{\"o}llin},~\bfnm{Adrian}\binits{A.}} \AND
\bauthor{\bsnm{Ross},~\bfnm{Nathan}\binits{N.}}
(\byear{2013}b).
\btitle{Total variation error bounds for geometric approximation}.
\bjournal{Bernoulli}
\bvolume{19}
\bpages{610--632}.
\bid{doi={10.3150/11-BEJ406}, issn={1350-7265}, mr={3037166}}
\end{barticle}
%
\bptok{imsref}%
\endbibitem

\bibitem[\protect\citeauthoryear{Pek{\"o}z, R{\"o}llin and
Ross}{2014}]{Pekoz2014}
%
\begin{bmisc}[author]
\bauthor{\bsnm{Pek\"oz},~\bfnm{E.}\binits{E.}},
\bauthor{\bsnm{R\"ollin},~\bfnm{A.}\binits{A.}} \AND
\bauthor{\bsnm{Ross},~\bfnm{N.}\binits{N.}}
(\byear{2014}).
\bhowpublished{Joint degree distributions of preferential attachment
random graphs.
Preprint. Available at \arxivurl{arXiv:1402.4686}.}
\end{bmisc}
%
\bptok{imsref}%
\endbibitem

\bibitem[\protect\citeauthoryear{Pemantle}{2007}]{Pemantle2007}
%
\begin{barticle}[mr]
\bauthor{\bsnm{Pemantle},~\bfnm{Robin}\binits{R.}}
(\byear{2007}).
\btitle{A survey of random processes with reinforcement}.
\bjournal{Probab. Surv.}
\bvolume{4}
\bpages{1--79}.
\bid{doi={10.1214/07-PS094}, issn={1549-5787}, mr={2282181}}
\end{barticle}
%
\bptok{imsref}%
\endbibitem

\bibitem[\protect\citeauthoryear{Pitman}{1999}]{Pitman1999b}
%
\begin{bincollection}[mr]
\bauthor{\bsnm{Pitman},~\bfnm{Jim}\binits{J.}}
(\byear{1999}).
\btitle{The distribution of local times of a {B}rownian bridge}.
In \bbooktitle{S\'eminaire de {P}robabilit\'es, {XXXIII}}.
\bseries{Lecture Notes in Math.}
\bvolume{1709}
\bpages{388--394}.
\bpublisher{Springer},
\blocation{Berlin}.
\bid{doi={10.1007/BFb0096528}, mr={1768012}}
\end{bincollection}
%
\bptok{imsref}%
\endbibitem

\bibitem[\protect\citeauthoryear{Pitman}{2006}]{Pitman2006}
%
\begin{bbook}[mr]
\bauthor{\bsnm{Pitman},~\bfnm{J.}\binits{J.}}
(\byear{2006}).
\btitle{Combinatorial Stochastic Processes}.
\bseries{Lecture Notes in Math.}
\bvolume{1875}.
\bpublisher{Springer},
\blocation{Berlin}.
\bid{mr={2245368}}
\end{bbook}
%
\bptok{imsref}%
\endbibitem

\bibitem[\protect\citeauthoryear{Pitman and Ross}{2012}]{Pitman2012}
%
\begin{barticle}[mr]
\bauthor{\bsnm{Pitman},~\bfnm{Jim}\binits{J.}} \AND
\bauthor{\bsnm{Ross},~\bfnm{Nathan}\binits{N.}}
(\byear{2012}).
\btitle{Archimedes, {G}auss, and {S}tein}.
\bjournal{Notices Amer. Math. Soc.}
\bvolume{59}
\bpages{1416--1421}.
\bid{doi={10.1090/noti905}, issn={0002-9920}, mr={3025901}}
\end{barticle}
%
\bptok{imsref}%
\endbibitem

\bibitem[\protect\citeauthoryear{Rai{\v{c}}}{2003}]{Raic2003}
%
\begin{bincollection}[author]
\bauthor{\bsnm{Rai{\v{c}}},~\bfnm{M.}\binits{M.}}
(\byear{2003}).
\btitle{Normal approximation with {S}tein's method}.
In \bbooktitle{Proceedings of the Seventh Young Statisticians Meeting}.
\bpublisher{Metodoloski zvezki},
\blocation{Ljubljana}.
\end{bincollection}
%
\bptok{imsref}%
\endbibitem

\bibitem[\protect\citeauthoryear{Reinert}{2005}]{Reinert2005}
%
\begin{bincollection}[mr]
\bauthor{\bsnm{Reinert},~\bfnm{Gesine}\binits{G.}}
(\byear{2005}).
\btitle{Three general approaches to {S}tein's method}.
In \bbooktitle{An Introduction to {S}tein's Method}.
\bseries{Lect. Notes Ser. Inst. Math. Sci. Natl. Univ. Singap.}
\bvolume{4}
\bpages{183--221}.
\bpublisher{Singapore Univ. Press},
\blocation{Singapore}.
\bid{doi={10.1142/9789812567680_0004}, mr={2235451}}
\end{bincollection}
%
\bptok{imsref}%
\endbibitem

\bibitem[\protect\citeauthoryear{R{\'e}my}{1985}]{Remy1985}
%
\begin{barticle}[mr]
\bauthor{\bsnm{R{\'e}my},~\bfnm{Jean-Luc}\binits{J.-L.}}
(\byear{1985}).
\btitle{Un proc\'ed\'e it\'eratif de d\'enombrement d'arbres binaires
et son application \`a leur g\'en\'eration al\'eatoire}.
\bjournal{RAIRO Inform. Th\'eor.}
\bvolume{19}
\bpages{179--195}.
\bid{issn={0399-0540}, mr={0803997}}
\end{barticle}
%
\bptok{imsref}%
\endbibitem

\bibitem[\protect\citeauthoryear{R{\"o}llin and Ross}{2015}]{Rollin2012a}
%
\begin{barticle}[mr]
\bauthor{\bsnm{R{\"o}llin},~\bfnm{Adrian}\binits{A.}} \AND
\bauthor{\bsnm{Ross},~\bfnm{Nathan}\binits{N.}}
(\byear{2015}).
\btitle{Local limit theorems via {L}andau--{K}olmogorov inequalities}.
\bjournal{Bernoulli}
\bvolume{21}
\bpages{851--880}.
\bid{doi={10.3150/13-BEJ590}, issn={1350-7265}, mr={3338649}}
\bptnote{check volume, check pages, check year}%
\end{barticle}
%
\bptok{imsref}%
\endbibitem

\bibitem[\protect\citeauthoryear{Ross}{2011}]{Ross2011}
%
\begin{barticle}[mr]
\bauthor{\bsnm{Ross},~\bfnm{Nathan}\binits{N.}}
(\byear{2011}).
\btitle{Fundamentals of {S}tein's method}.
\bjournal{Probab. Surv.}
\bvolume{8}
\bpages{210--293}.
\bid{doi={10.1214/11-PS182}, issn={1549-5787}, mr={2861132}}
\end{barticle}
%
\bptok{imsref}%
\endbibitem

\bibitem[\protect\citeauthoryear{Ross}{2013}]{Ross2013}
%
\begin{barticle}[mr]
\bauthor{\bsnm{Ross},~\bfnm{Nathan}\binits{N.}}
(\byear{2013}).
\btitle{Power laws in preferential attachment graphs and {S}tein's
method for the negative binomial distribution}.
\bjournal{Adv. in Appl. Probab.}
\bvolume{45}
\bpages{876--893}.
\bid{doi={10.1239/aap/1377868543}, issn={0001-8678}, mr={3102476}}
\end{barticle}
%
\bptok{imsref}%
\endbibitem

\bibitem[\protect\citeauthoryear{Ross and Pek\"{o}z}{2007}]{Ross2007}
%
\begin{bmisc}[author]
\bauthor{\bsnm{Ross},~\bfnm{S.}\binits{S.}} \AND
\bauthor{\bsnm{Pek\"oz},~\bfnm{E.}\binits{E.}}
(\byear{2007}).
\bhowpublished{A second course in probability.
\surl{www.ProbabilityBookstore.com}, Boston, MA.}
\end{bmisc}
%
\bptok{imsref}%
\endbibitem

\bibitem[\protect\citeauthoryear{Stein}{1986}]{Stein1986}
%
\begin{bbook}[mr]
\bauthor{\bsnm{Stein},~\bfnm{Charles}\binits{C.}}
(\byear{1986}).
\btitle{Approximate Computation of Expectations}.
\bseries{Institute of Mathematical Statistics Lecture
Notes---Monograph Series}
\bvolume{7}.
\bpublisher{IMS},
\blocation{Hayward, CA}.
\bid{mr={0882007}}
\end{bbook}
%
\bptok{imsref}%
\endbibitem

\bibitem[\protect\citeauthoryear{Vervaat}{1979}]{Vervaat1979}
%
\begin{barticle}[mr]
\bauthor{\bsnm{Vervaat},~\bfnm{Wim}\binits{W.}}
(\byear{1979}).
\btitle{A relation between {B}rownian bridge and {B}rownian excursion}.
\bjournal{Ann. Probab.}
\bvolume{7}
\bpages{143--149}.
\bid{issn={0091-1798}, mr={0515820}}
\end{barticle}
%
\bptok{imsref}%
\endbibitem

\bibitem[\protect\citeauthoryear{Wendel}{1948}]{Wendel1948}
%
\begin{barticle}[mr]
\bauthor{\bsnm{Wendel},~\bfnm{J.~G.}\binits{J.~G.}}
(\byear{1948}).
\btitle{Note on the gamma function}.
\bjournal{Amer. Math. Monthly}
\bvolume{55}
\bpages{563--564}.
\bid{issn={0002-9890}, mr={0029448}}
\end{barticle}
%
\bptok{imsref}%
\endbibitem

\bibitem[\protect\citeauthoryear{Zhang, Hu and Cheung}{2006}]{Zhang2006}
%
\begin{barticle}[mr]
\bauthor{\bsnm{Zhang},~\bfnm{Li-X.}\binits{L.-X.}},
\bauthor{\bsnm{Hu},~\bfnm{Feifang}\binits{F.}} \AND
\bauthor{\bsnm{Cheung},~\bfnm{Siu~Hung}\binits{S.~H.}}
(\byear{2006}).
\btitle{Asymptotic theorems of sequential estimation-adjusted urn models}.
\bjournal{Ann. Appl. Probab.}
\bvolume{16}
\bpages{340--369}.
\bid{doi={10.1214/105051605000000746}, issn={1050-5164}, mr={2209345}}
\end{barticle}
%
\bptok{imsref}%
\endbibitem

\bibitem[\protect\citeauthoryear{Zhang et~al.}{2011}]{Zhang2011}
%
\begin{barticle}[mr]
\bauthor{\bsnm{Zhang},~\bfnm{Li-Xin}\binits{L.-X.}},
\bauthor{\bsnm{Hu},~\bfnm{Feifang}\binits{F.}},
\bauthor{\bsnm{Cheung},~\bfnm{Siu~Hung}\binits{S.~H.}} \AND
\bauthor{\bsnm{Chan},~\bfnm{Wai~Sum}\binits{W.~S.}}
(\byear{2011}).
\btitle{Immigrated urn models---theoretical properties and applications}.
\bjournal{Ann. Statist.}
\bvolume{39}
\bpages{643--671}.
\bid{doi={10.1214/10-AOS851}, issn={0090-5364}, mr={2797859}}
\end{barticle}
%
\bptok{imsref}%
\endbibitem

\end{thebibliography}
\end{document}